\documentclass[10pt,leqno,a4paper]{article}
\usepackage{amsmath} 
\usepackage{amsthm}
\usepackage{amssymb}
\usepackage{color}
\usepackage{tikz}
\usepackage{tikz-cd}
\usetikzlibrary{matrix,arrows,decorations.pathmorphing}



\usepackage[english]{babel}
\usepackage[totalheight=22 true cm, totalwidth=14 true cm]{geometry}
\usepackage{setspace}
\usepackage{mathrsfs}
\usepackage[all]{xy}


%
%

\newtheorem{thm}{Theorem}[section]
\newtheorem{cor}[thm]{Corollary}
\newtheorem{lemma}[thm]{Lemma}

\newtheorem{prop}[thm]{Proposition}

\newtheorem{defn}[thm]{Definition}

\theoremstyle{remark}

\theoremstyle{definition}

\newtheorem{rmk}[thm]{Remark}
\newtheorem{exa}[thm]{Example}

\newtheorem{notation}[thm]{Notation}
\numberwithin{equation}{thm}
\def\beq{\begin{equation}}
\def\eeq{\end{equation}}
\def\beqa{\begin{equation*}}
\def\eeqa{\end{equation*}}
\def\ben{\begin{enumerate}}
\def\een{\end{enumerate}}
\def\besp{\begin{split}}
\def\eesp{\end{split}}

\def\crash#1{}

\def\Z{{\mathbb Z}}
\def\0{{\mathbb O}}

\def\Q{{\mathbb Q}}
\def\R{{\mathbb R}}

\def\F{{\mathbb F}}

\def\1{{\mathbf 1}}

\def\l{\left}
\def\r{\right}
\def\[[{\l[\l[}
\def\]]{\r]\r]}

\def\fpm{{\rm pm}}
\def\CW{{\rm CW}}

\def\ie{\emph{i.e.}\,}

\def\rmW{{\rm W}}

\def\cA{{\mathcal A}}
\def\cB{{\mathcal B}}

\def\cF{{\mathcal F}}
\def\cG{{\mathcal G}}
\def\cI{{\mathcal I}}
\def\cM{{\mathcal M}}

\def\cH{{\mathcal H}}

\def\cL{{\mathcal L}}

\def\cP{{\mathcal P}}
\def\cR{{\mathcal R}}
\def\cS{{\mathcal S}}
\def\cT{{\mathcal T}}

\def\cU{{\mathcal U}}

\def\g"{``}
\def\g'{`}

\def\sC{{\mathscr C}}

\def\sH{{\mathscr H}}
\def\sJ{{\mathscr J}}

\def\sL{{\mathscr L}}

\def\sP{{\mathscr P}}

\def\what{\widehat}

\def\sp{{\rm sp}}

\def\can{{\rm can}}

\def\for{{\rm for}}

\def\CW{{\rm CW}}
\def\BW{{\rm BW}}

\def\Bil{{\rm Bil}}
\def\Ril{{\rm Ril}}

\def\hom{{\rm Hom}}

\def\ind{{\rm ind}}

\def\unif{{\rm unif}}

\def\id{{\rm id}}
\def\Ker{{\rm Ker}}
\def\Coker{{\rm Coker}}
\def\Im{{\rm Im}}
\def\Coim{{\rm Coim}}

\def\ul{\underline}
\def\ol{\overline}
\def\iso{\xrightarrow{\ \sim\ }}
\def\map#1{\xrightarrow{\ #1\ }}

\def\limpro{\mathop{\lim\limits_{\displaystyle\leftarrow}}}

\def\limind{\mathop{\lim\limits_{\displaystyle\rightarrow}}}
\def\limIND#1{\mathop{\lim\limits_{\substack {\displaystyle\rightarrow \\ #1} } }\,}
\def\limPRO#1{\mathop{\lim\limits_{\substack {\displaystyle\leftarrow \\ #1} } }\,}


\def\wt{\what{\otimes}}

\def\bd{{\rm bd}}


\author{Francesco Baldassarri
\thanks{Universit\`{a} di Padova,
Dipartimento di Matematica, Via Trieste, 63, 35121 Padova, Italy.}
 }

\title{Non archimedean  gauge seminorms}

\bibliographystyle{plain}

\begin{document} 
\date{\today}

\maketitle 
\begin{abstract}  
We fix a  base commutative topological ring $k$, separated and complete in a linear topology. 
Within  the  category $\cL\cM_k$  of $k$-linearly topologized $k$-modules, we single-out the full subcategory $\cL\cM^u_k$ of $k$-modules whose scalar product is 
uniformly continuous. We describe limits and colimits, and introduce a  tensor product $\wt_k^c$ (resp. $\wt_k^u$) in $\cL\cM_k$ (resp. in $\cL\cM^u_k$).
When $k=K^\circ$, for  a non trivially valued  non archimedean field $K$, $K$-Banach spaces \cite{schneider} are objects of $\cL\cM_k$ but not of $\cL\cM^u_k$. 
We propose a definition of a \emph{pseudobanach} $k$-module which coincides with the one of a $K$-Banach space  if $k = K^\circ$, but covers in general the notion of a family of 
Banach spaces over  variable fields. We describe 
the category $\cL\cR_k$ (resp. $\cR\cR_k$) of complete $k$-linearly (resp. linearly) topologized  $k$-rings and the full subcategory  $\cL\cR_k^u$ of  $\cL\cR_k$ of the $k$-rings for which the 
scalar product is uniformly continuous. 
We discuss limits and colimits in $\cL\cR_k$ (resp. $\cL\cR^u_k$, resp. $\cR\cR_k$) and examine their commutation with the monoidal structures $\wt_k^c$ (resp. $\wt_k^u$).
The former monoidal structure is analog to both Schneider's $- \wt_{K,\iota} -$ and  $- \wt_{K,\pi} -$ \cite{schneider}, while the latter is the one used in the theory of formal schemes. 
 \end{abstract}
 
\tableofcontents
\bigskip
\setcounter{section}{-1}
 \begin{section}{Introduction}
The main motivation of this paper is that of providing solid foundations to the theory of  commutative group and ring functors  on categories of commutative topological $k$-rings, over some fixed 
base ring $k$ 
which is complete in a  linear topology.  In particular, the present paper is preliminary to \cite{psi0}. 
\par \smallskip
Recent developments of arithmetic geometry, in particular, use variants $\rmW$, $\CW$, $\BW$ of the functors of Witt vectors, covectors and bivectors defined for topological rings of characteristic $p>0$    \cite{MA}, \cite{fontaineA}, \cite{FF}, 
to establish the remarkable \emph{tilting equivalence} of Scholze \cite{scholze}. Such Witt-type functors can also  be globalized so that to apply to (non-archimedean) analytic  spaces and to extend 
geometrically the previous equivalence to relative situations \cite{KL1}, \cite{KL2}. 
 The discussion of the most general type of topological rings to which these functors apply is however usually avoided. In general, one   
considers Banach rings rather than  rings  
complete in a linear topology. On the other hand, a glance at the literature indicates that 
little more than definitions are to be found about
the category $\cL\cR_k$ (resp. $\cR\cR_k$) 
of $k$-rings which are complete in a $k$-linear (resp. linear) topology, as soon as one leaves the safe continent of Noetherian adic rings or of mild variations of such. 
\par
From another viewpoint, a rich theory of locally convex topological vector spaces over a non-archimedean field $K$ exists \cite{schneider}, 
and the correspondence 
\par \medskip \noindent
\centerline {open lattice $\longleftrightarrow$ gauge seminorm }
\par \medskip \noindent
establishes a link between the additional information provided by a seminorm and the one obtained by regarding  a topological $K$-vector space as linearly topologized complete $k$-module, 
where $k = K^\circ$,  the ring of integers of $K$. We point out however that such rings $k$ are quite special. In particular they are essentially one-dimensional, while we are interested 
in higher-dimensional base-rings, as well. 
\par
\smallskip
We are lead to single-out within the category $\cL\cM_k$ of $k$-linearly topologized separated and complete topological $k$-modules $M$, for which the scalar product $k \times M \to M$ is continuous for the product topology, the full subcategory  $\cL\cM^u_k$ consisting of \emph{uniform} objects, namely those $M$ for which the scalar product is \emph{uniformly} continuous for the product uniformity of $k \times M$. 
Notice that, unless $K$ is trivially valued, a non-zero $K$-Banach space is never a uniform object of $\cL\cM_k$, for $k = K^\circ$.
On the other hand, a standard assumption 
in the theory of, say, $k$-formal schemes topologically locally of finite type over a Noetherian $k$, is that the $k$-linear topologies on any $k$-module $M$ considered 
should be weaker than the topology induced by $k$ (meaning 
the \emph{naive canonical topology} of $M$, Definition~\ref{naivedef}). This is precisely  the meaning of $M$ being uniform, see Lemma~\ref{factexact0}. 
Such an assumption cannot be made 
in our context since it does not generally hold for a non-archimedean  field $K$ itself, on which $k=K^\circ$ induces the trivial topology, namely $\{\emptyset, K\}$. Moreover we cannot make any finiteness 
assumption, and $k$ in particular may not be Noetherian. 
\par
The existence of the full subcategory  $\cL\cM^u_k$ of $\cL\cM_k$
generates two distinct notions $\wt^u_k$ and $\wt^c_k$ of topological tensor product, where the apex ``$u$'' refers to ``uniform'' while the apex ``$c$'' refers to ``continuous''. 
The monoidal structure $\wt^c_k$ is the completion of the one used for Fr\'echet spaces in \cite{schneider}, and there denoted $\otimes_{K,\pi} = \otimes_{K,\iota}$, 
 while $\wt^u_k$ is the monoidal structure used in the theory of $k$-formal groups topologically of finite type. 
 \par
\smallskip
The purpose of this paper is  twofold. On the one hand, we want to encompass the theory of Banach vector spaces over a non archimedean 
field $K$, and of continuous $K$-linear maps, within the theory of $k$-linearly topologized separated and complete topological $k$-modules, where $k = K^\circ$. 
In particular, we want to characterize $K$-Banach spaces within such topological $K^\circ$-modules, with no 
 reference to a norm. Notice that $k$ is a linearly topologized ring in the sense that 
a fundamental system of neighborhoods of $0$ in $k$ consists of open ideals. We obtain the notion of a \emph{pseudobanach} $k$-module or $k$-algebra (see Definition~\ref{pseudocan} and Definition~\ref{banringdef} below). 
When $k = K^\circ$ for a non trivially valued complete non-archimedean field, the full subcategory $\cP\cB_k$ (resp. $\cP\cB\cA_k$ (resp. $\cU\cP\cB\cA_k$)) of 
$\cL\cM_k$ (resp. $\cL\cR_k$) of pseudobanach $k$-modules 
(resp. of commutative pseudobanach $k$-algebras (resp. of $\fpm$-type, see Definition~\ref{banringdef} below)) is equivalent to the category of $K$-Banach spaces (resp. of commutative $K$-Banach algebras (resp. of $\fpm$-type \cite{fontaineA})) and 
continuous $K$-linear homomorphisms. This part of our discussion may be seen as a non-archimedean analog of 
the theory of gauge seminorms, as Schneider's \cite{schneider}, with the difference that it is developed over $k$, with no reference to $K$. This, by the way, accounts for the title we chose for this paper. 
For general $k$, 
the categories $\cP\cB_k$, $\cP\cB\cA_k$ and $\cU\cP\cB\cA_k$ are new.  
\par
The monoidal structure  $-\wt^c_k C$ is used  to define base change for $K$-Banach algebras (viewed as $k=K^\circ$-modules) via a  morphism $k \to C$ of $\cR\cR_k$.  The first  
main result of this paper is 
  Proposition~\ref{basechange1} which shows that, under the mild condition {\bf OPW} on $k$ and $C$, if $C/k$ is pro-flat (see Definition~\ref{tffdef}), then 
for any $k$-pseudobanach space $M$, $(M)^c_{C} = M\wt^c_k C$ is a $C$-pseudobanach space.
\par
\smallskip
On the other hand, for application to the representability of  our group and ring functors 
we need to have at our disposal a full subcategory $\cI\cR\cR_k$ of $\cL\cR_k$ containing the algebras representing 
all functors of interest to us, together with a base-change functor $(-)^\ind_{C} : \cI\cR\cR_k \to \cI\cR\cR_C$
for  $k \to C$ as before. To give an idea of the difficulties we faced, we point out that our group or ring functors are  
defined on the entire category $\cL\cR_k$. They are represented by Hopf or bi-Hopf algebra objects of $\cL\cR^u_k := \cL\cR_k \cap \cL\cM_k^u$ which    however are  
\emph{inductive limits in $\cL\cR_k$} (hence also in $\cL\cR^u_k$), denoted ${\limIND{\alpha}}^\cL R_\alpha$ (resp. ${\limIND{\alpha}}^u R_\alpha$), 
\emph{of inductive systems $\{R_\alpha\}_\alpha$  in $\cR\cR_k$}. This property defines the full subcategory $\cI\cR\cR_k$ of $\cL\cR_k$. 
The basic linear algebra constructions involved require special attention and raise the general problem of existence and description of limits and colimits, and 
of suitable monoidal structures.  In particular, assume one of our functors $F$ is represented by an object $\sL_k = {\limIND{\alpha}}^\cL R_\alpha$ in $\cL\cR^u_k$, as before, in the sense that
$$
F(X) = \hom_{\cL\cR_k}(\sL_k,X) \;,
$$
for any $X$ in $\cL\cR_k$.  Then  the second main result of this 
paper, Proposition~\ref{scalrestr}, shows that 
the restriction   of $F$ to $\cL\cR^u_C$, where $C$ is an object of $\cR\cR_k$, is represented by an object $\cL_C := (\cL_k)^\ind_{C}$ of $\cL\cR^u_C$ which is also an  
inductive limit, namely  ${\limIND{\alpha}}^\cL (R_\alpha)^u_{C}$, in $\cL\cR_C$ of an 
inductive system in $\cR\cR_C$. Here, for $X$ in $\cR\cR_k$,  $(X)^u_{C} = X \wt^u_k C$ is an object of $\cR\cR_C$, see \eqref{prodalg1} and   \eqref{tensudef}. 
This at least shows that the functor 
\beqa
\begin{split}
( -)^\ind_{C} : \cI\cR\cR_k & \longmapsto \cI\cR\cR_C \\
\sL_k & \longmapsto \sL_C = (\sL_k)^\ind_{C}
\end{split}
\eeqa
is well-defined (see Remark~\ref{scalrestr1} below).
\par \smallskip
It will be shown  in \cite{psi0} that the colimit $\cT(\sL_k) = {\limIND{\alpha}}^\cR R_\alpha$ (see Lemma~\ref{indlim1}), \emph{taken this time in $\cR\cR_k$},  of the same 
inductive system $\{R_\alpha\}_\alpha$  in $\cR\cR_k$ which defines $\sL_k$, represents, under suitable conditions,  the subfunctor $X \mapsto F^\bd(X)$ of bounded elements of $F(X)$. 
The importance of finding a topological algebra representing the subfunctors $\rmW^\bd$,  $\CW^\bd$ and $\BW^\bd$ of \cite{fontaineA}, \cite{FF} can hardly be overestimated. 
\par
\medskip

For applications it is important to consider the restriction to $\cU\cP\cB\cA_k$ of functors $F$ of the type 
described above.  We recall that, when $k = K^\circ$, as above, $\cU\cP\cB\cA_k$ identifies with the category of $K$-Banach algebras of $\fpm$-type of \cite{fontaineA}. 
In \cite{psi0} we will define the functors $\CW$, $\BW$, $\rmW^\bd$,  $\CW^\bd$ and $\BW^\bd$ on $\cL\cR_k$ 
when $k$ is of characteristic $p$ and is \emph{topologically perfect}, in the sense that its Frobenius is an automorphism. We will prove that $\CW^\bd$ (resp. $\BW^\bd$) induces a functor $\cU\cP\cB\cA_k \to \cP\cB_{\rmW (k)}$ 
(resp. $\cU\cP\cB\cA_k \to \cU\cP\cB\cA_{\rmW (k)}$). 
Of special interest is the case of $k=\kappa = \F_p[[t^{p^{-\infty}}]]$, completed in  the $t$-adic topology. 
In that case, for $R$  an object of 
$\cU\cP\cB\cA_\kappa$, $\CW^\bd_\kappa(R)$ (resp. $\BW^\bd_\kappa(R)$, for $R$ topologically perfect) is an object of $\cP\cB_{\rmW (\kappa)}$ (resp. $\cU\cP\cB\cA_{\rmW (\kappa)}$), where $\rmW (\kappa)$ is of dimension 2. 
Then, for any quotient map $\rmW (\kappa) \to L^\circ$ onto the ring of integers of a perfectoid $p$-adic field $L$, $(\CW^\bd_\kappa(R))^c_{L^\circ}$ (resp. $(\BW^\bd_\kappa(R))^c_{L^\circ}$) (see Proposition~\ref{basechange1} (resp. Corollary~\ref{basechunif}) below)
becomes  an $L$-Banach space (resp. a perfectoid algebra over $L$ \cite{scholze}) in the usual sense. 

  \par \smallskip
  The generalities we develop in this paper  suffice for  the  purpose we have in mind, and in particular provide 
a natural  framework to \cite{psi0}. 
It appears however  that the discussion of linear topologies, and of  ``linear uniformities'', 
  should be developed more systematically,  especially as far as limits and colimits are concerned. It seems that such a 
 systematic discussion does not exist in the literature yet. 
  \par \medskip
   \emph{Acknowledgments.} I am very grateful to Maurizio Cailotto for generously dedicating a lot of his time to discuss with me a variety of mathematical 
 questions which arose during the preparation of this article.   I had the privilege of discussing the content of this paper with Peter Schneider; his criticism was very useful.  
\end{section}
\begin{section}{Notation}
A prime number $p$ is fixed throughout this paper even though it only appears  in examples; then $\Z_p$ and $\Q_p$ have the usual meaning. 
Unless otherwise specified, a  \emph{ring}  is meant to be commutative with 1. 
We denote by $\cA b$ (resp. $\cR ings$, resp. $\cM od_k$) the category of abelian groups (resp. of commutative rings with 1, resp. of unitary $k$-modules for $k$ a ring). 
Generally speaking, for any ring $R$ in $\cR ings$, the $R$-algebras appearing in this paper will be understood to be  commutative with 1. 
All the non-archimedean fields $K = (K,v_K)$ we consider will be complete but not necessarily non trivially valued.  
We set $K^\circ := \{x \in K\,|\,v_K(x) \geq 0\,\}$ and $K^{\circ \circ}:= \{x \in K\,|\,v_K(x)  > 0\,\}$. 
\end{section} 
\begin{section}{Linearly topologized modules}
A topological ring $k$ is \emph{linearly topologized} if it has a basis  of open neighborhoods of 0 consisting of ideals. This implies that the product map
$$
\mu_k:  k \times k \longrightarrow k \;\;,\;\; (x,y) \longmapsto xy
$$
is uniformly continuous.  
All over this paper,  $k$ will be a complete separated linearly topologized topological ring.  
We often call $\cP(k)$ a fundamental system of open ideals of $k$. 
For certain constructions, we will need the further assumption  of \emph{openness} on a linearly topologized ring $R$, namely
\par \medskip
{\bf (OP)} \emph{For any regular element $a$ of $R$, the
map 
$$ R \longrightarrow R \;\;,\;\; x \longmapsto a \, x
$$
is open.}
\par
\medskip
\begin{rmk} \label{nonOP} A typical example of a linearly topologized ring $R$ for which property {\bf OP} fails is $\Z[T]$ equipped with the $(p,T)$-adic topology. It is clear in fact that the ideals $(T^N)$ and 
$(p^N)$ are not open in $\Z[T]$, for any $N \in \Z_{\geq 1}$. The condition is instead verified by any complete rank one valuation ring.
\end{rmk}
\begin{rmk} \label{OP} It follows from assumption {\bf OP} that, if $R$ is a domain,  
 the product map
$$
\mu_R : R \times R  \longrightarrow R  \;\;,\;\; (x,y) \longmapsto x \, y
$$
is open. In fact, for any open ideals $I,J$ of $R$, $IJ = \bigcup_{a \in I -\{0\}} aJ$ is a union of open subsets of $R$, hence is open. Now, for any open subsets $U ,V \subset R$, $U = \bigcup_{a \in U} a+I_a$, 
$V = \bigcup_{a \in v} a+J_a$ where $I_a,J_a$ are open ideals of $R$. Then 
$$UV = \bigcup_{a \in U, b \in V} (a +I_a)(b+J_b) =  \bigcup_{a \in U, b \in V} ab +bI_a + aJ_b +I_a J_b
$$
is a union of open subsets of $R$. 
\end{rmk}
A weaker assumption  of  openness on a linearly topologized ring $R$ is
\par \medskip
{\bf (OPW)} \emph{For any open ideals $I$ and $J$ of $R$, the ideal $IJ$ is open.}
\par
\medskip
Assumption {\bf OPW} holds for any adic ring \cite[Chap. 0, Def. 7.1.9]{EGA}. 
Whenever assumption {\bf OP} or  {\bf OPW} will be needed,  the reader will be explicitly warned.  Assumptions {\bf OP} and {\bf OPW} certainly hold for $R$ if 
$R$ is   
endowed with the discrete topology. Let $R$ be a linearly topologized ring. A \emph{topological $R$-module} is a topological abelian group which is also an $R$-module  such that the scalar product map
$$
R \times M \longrightarrow M  \;\;,\;\; (a,x) \longmapsto a \, x
$$
is continuous \emph{for the product topology of $R \times M$}. 
An  \emph{$R$-linear}  topology on an  $R$-module $M$  is an  
$R$-module topology  
which has a   basis of open neighborhoods of 0 consisting of $R$-submodules of 
$M$. We often call $\cP(M)$ a fundamental system of open $R$-modules in $M$. Then $M$ is equipped with a canonical uniform structure  and we say that it is \emph{uniform} 
if the scalar product map is uniformly continuous for the product uniformity of $R \times M$.  Notice that if the topology of $k$ is the discrete one, then any 
 topological $R$-module is uniform.  
A  complete $k$-linearly topologized $k$-module $M$  is meant to be separated. Similarly, when we refer to ``completion'' we always mean ``separated completion''. 
A  relevant condition for an $R$-linearly topologized $R$-module $M$ is 
\par \medskip
{\bf (OPM)} \emph{For any regular element $a$ of $R$, the
map 
$$ M \longrightarrow M \;\;,\;\; x \longmapsto a \, x
$$
is open.}
\par
\medskip

When discussing topological vector spaces  $V$ over a  non archimedean field $K$, we will generally choose $k = K^\circ$, equipped with the subspace topology and will assume that 
$V$ is equipped with a $k$-linear topology. In this case $k$  satisfies {\bf OP} and $V$  satisfies {\bf OPM} for $R =k$. Let $K$  be non-trivially valued let $k = K^\circ$. Let $V$ be 
a locally convex $K$-vector space in the sense of 
\cite{schneider}, then the  $k$-linearly topologized $k$-module $V$ is a topological $k$-module as well \cite[Lemma 4.1]{schneider}, but it is not uniform, in general.
Mostly we think 
of $k = \F_p$, the field with $p$ elements,  or $= \Z_{(p)}$, either one equipped with the discrete topology, or of $k= \Z_p$ 
equipped with the $p$-adic topology.  
\begin{defn}
\label{LMk}
We let $\cL\cM_k$ (resp. $\cL\cM^u_k$) be the category of  complete $k$-linearly topologized (resp. uniform) topological $k$-modules   and continuous $k$-linear maps.
\end{defn}

We denote by $k^\for$ the ring underlying the topological ring $k$. 
In general $M \mapsto M^\for$ will be the natural forgetful functor $\cL\cM_k \longrightarrow \cM od_{k^\for}$. 
To avoid excessively burdening the notation however, the category $\cM od_{k^\for}$ will be simply denoted by $\cM od_k$. Similarly, we generally 
write $\hom_k$ for $\hom_{k^\for}$, $\Bil_k$ (standing for ``$k$-bilinear'') for $\Bil_{k^\for}$, and shorten $M^\for \otimes_{k^\for} N^\for$ into $M \otimes_k N$. (Complete tensor product will have a distinguished 
notation, anyhow.) 
\begin{defn} \label{naivedef}
The \emph{naive canonical topology} on a $k$-module $M$ is the $k$-linear topology 
 with a basis of open $k$-submodules consisting of $\{I M\}_I$, for $I$ running over the set of open ideals of $k$. 
 For any $M$ in $\cM od_k$, we define the object 
 $M^\can$ of $\cL\cM_k$ to be the completion of $M$ in its naive canonical topology, \ie the $k$-module
\beq
\label{naive00}
\what{M} = \limpro_{I \in \cP(k)} M/IM
  \;,
\eeq 
  equipped with the weak topology of the projections to the discrete $k/I$-modules $M/IM$.  
  \end{defn}
 \begin{lemma} \label{factexact0} Let $M$ be an object of  $\cL\cM_k$. Then   $M$ is uniform if and only if its topology is weaker than the naive canonical topology.
\end{lemma}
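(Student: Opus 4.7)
The plan is to translate both the uniformity of scalar multiplication and the domination by the naive canonical topology into simple containment statements about open $k$-submodules of $M$, and then verify each implication by a short direct argument. Since $M \in \cL\cM_k$ has a basis of neighborhoods of $0$ consisting of open $k$-submodules, and since $\{IM\}_I$ (for $I$ ranging over a fundamental system $\cP(k)$ of open ideals of $k$) is a basis of open neighborhoods of $0$ for the naive canonical topology on $M$, the condition \emph{``the topology of $M$ is weaker than the naive canonical topology''} is equivalent to:
\beqa
(*) \quad \text{for every open $k$-submodule } N \subseteq M, \text{ there exists } I \in \cP(k) \text{ with } IM \subseteq N.
\eeqa

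For the forward direction, I would assume $M$ is uniform and fix an open $k$-submodule $N \subseteq M$. Since $N$ is an entourage of the (translation-invariant) uniformity of $M$, uniform continuity of the scalar product $\mu_M\colon k \times M \to M$ yields an open ideal $I \in \cP(k)$ and an open $k$-submodule $N' \subseteq M$ such that $ax - a'x' \in N$ whenever $a - a' \in I$ and $x - x' \in N'$. Specializing to $(a',x') = (0,0)$ gives $ax \in N$ for every $a \in I$ and $x \in M$, i.e.\ $IM \subseteq N$. This is exactly $(*)$.

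For the converse, assume $(*)$ and fix an open $k$-submodule $N \subseteq M$. Choose $I \in \cP(k)$ with $IM \subseteq N$, and set $N' := N$. For any $(a,x), (a',x') \in k \times M$ with $a - a' \in I$ and $x - x' \in N'$, decompose
\beqa
ax - a'x' = (a-a')\, x + a'\,(x - x') \in IM + k\, N' \subseteq N + N = N,
\eeqa
where $k N' \subseteq N'$ because $N'$ is a $k$-submodule. This shows $\mu_M$ is uniformly continuous for the product uniformity of $k \times M$, i.e.\ $M$ is uniform.

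There is no real obstacle here: the content of the lemma is entirely in identifying the right translation of the two conditions, after which both implications reduce to a one-line computation; the only point worth care is to check $(*)$ against open $k$-submodules rather than arbitrary open neighborhoods of $0$, which is justified by $M$ being $k$-linearly topologized.
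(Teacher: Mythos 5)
The overall strategy is the same as the paper's: reduce both conditions to the containment statement $(*)$ and check the two implications directly. The converse direction is carried out correctly (the paper simply says it is clear), and the decomposition $ax - a'x' = (a-a')x + a'(x-x')$ together with $kN' \subseteq N'$ is exactly what is needed.

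However, the forward direction as written contains an invalid step. Specializing the uniform-continuity estimate to $(a',x') = (0,0)$ forces \emph{both} constraints $a - a' = a \in I$ \emph{and} $x - x' = x \in N'$, so what you actually obtain is $ax \in N$ for $a \in I$ and $x \in N'$, i.e.\ $IN' \subseteq N$ --- not $IM \subseteq N$. The claim ``gives $ax \in N$ for every $a \in I$ and $x \in M$'' silently drops the constraint $x \in N'$, and $IN' \subseteq N$ is genuinely weaker than $(*)$: it says nothing about elements of $M$ outside $N'$. The correct specialization, which is the one implicit in the paper's proof, is $(a,x) = (a, m)$ and $(a',x') = (0,m)$ for an \emph{arbitrary} $m \in M$: then $a - a' = a \in I$ and $x - x' = 0 \in N'$, so $am = ax - a'x' \in N$, and since $m$ ranges over all of $M$ this yields $IM \subseteq N$. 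With that one-line repair the proof is complete and matches the paper's argument.
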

  \begin{proof} Assume $M$ is 
  uniform. Then, for any $U \in \cP(M)$ there is a $V \in \cP(M)$ and an $I \in \cP(k)$ such that 
  for any $a \in k$ and $m \in M$,  
  $$(a+I)(m + V) \subset am + U \;.
  $$
  But this implies that $IM \subset U$, so that the topology of $M$ is weaker than the canonical one. The converse 
is clear.   
  \end{proof}
   \begin{cor} \label{factexact1} For any $M$ in $\cM od_k$,  $M^\can$ is an object of $\cL\cM^u_k$. 
 \end{cor}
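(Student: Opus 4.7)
The plan is to first verify that $M^\can$ lies in $\cL\cM_k$, and then apply Lemma~\ref{factexact0} to conclude it is uniform.

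First I would unpack the topology on $M^\can = \what{M} = \limpro_{I \in \cP(k)} M/IM$. A fundamental system of open neighborhoods of $0$ consists of the kernels $U_I := \Ker(M^\can \to M/IM)$ as $I$ ranges over $\cP(k)$. Each $U_I$ is a $k$-submodule, so the topology is $k$-linear. Separatedness is immediate from the description as an inverse limit, and completeness follows because a projective limit of discrete $k$-modules, endowed with the projective limit uniformity, is complete. To see that $M^\can$ is a topological $k$-module, I would check that for every $I \in \cP(k)$ one has $I \cdot M^\can \subset U_I$: indeed, the image of $I \cdot M^\can$ in $M/IM$ lies in $I(M/IM) = 0$. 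Together with the fact that $U_I$ is a $k$-submodule, this implies that for any $(a,x) \in k \times M^\can$ the set $(a+I)(x + U_I)$ is contained in $ax + U_I$, giving continuity (and in fact uniform continuity) of the scalar product.

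At this point $M^\can$ is an object of $\cL\cM_k$. Now I would invoke Lemma~\ref{factexact0}: to show $M^\can$ is uniform, it suffices to show that its topology is weaker than its naive canonical topology. But a basis for the latter consists of the submodules $I \cdot M^\can$ for $I \in \cP(k)$, and by the computation above one has $I \cdot M^\can \subset U_I$, so every open neighborhood of $0$ in the given topology already contains an open in the naive canonical topology. Hence the given topology is weaker than the naive canonical one, and Lemma~\ref{factexact0} concludes that $M^\can \in \cL\cM^u_k$.

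There is no real obstacle here; the only slightly delicate step is the observation $I \cdot M^\can \subset U_I$, which is the same inclusion that makes both (i) scalar multiplication continuous on $M^\can$ and (ii) the defining topology weaker than the naive canonical one, so that the statement reduces neatly to Lemma~\ref{factexact0}.
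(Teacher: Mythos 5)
Your proof is correct and follows essentially the same route the paper intends: the corollary is meant to drop out of Lemma~\ref{factexact0} once one observes that $I\cdot M^{\can}\subset \Ker(M^{\can}\to M/IM)$, so the weak topology of the projections is weaker than the naive canonical topology of $M^{\can}$. You are also appropriately careful not to claim equality $I\cdot M^{\can}=U_I$, which is exactly the subtlety flagged in Remark~\ref{naive0}.
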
 
     \begin{defn} \label{canondef} The objects of $\cL\cM^u_k$ of the form $M^\can$, for an $M$ in $\cM od_k$, are said to be  \emph{canonical} 
  (resp. $k$-\emph{canonical}, for more precision) or to have the (resp. $k$-)\emph{canonical topology}.
  \end{defn}
 \begin{rmk} \label{naive0} Notice that the  naive canonical topology on a $k$-module $M$  runs, in general, into a serious difficulty. Namely, it 
 is not true in general that the completion of $(M, \{I M\}_I)$, that is the object $\what{M}$ defined above, would still carry the naive canonical topology. We are indebted to Peter Schneider 
 for pointing out this problem. 
 \end{rmk}
 \begin{prop} \label{adjoint0} The functors 
\beq
\begin{tikzcd}
\cM od_k   \arrow[bend left]{rr}{\can}&&\cL\cM^u_k \arrow{ll}{\rm for}
  \end{tikzcd}
  \eeq
  are adjoint~: for any  $M$ in $\cM od_k$ and $N$ in $\cL\cM^u_k$, there are canonical identifications
\beq \label{adjoint}
  \hom_{\cL\cM_k}(M^\can,N) = \hom_{\cM od_k}(M,N^{\rm for}) \; .
\eeq

\end{prop}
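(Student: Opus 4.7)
The plan is to write down the two adjunction maps explicitly and then check they are mutually inverse. Both the existence of the canonical map $\iota_M: M\to M^\can$ (coming from the construction of $M^\can$ as the separated completion in the naive canonical topology) and the universal property of the separated completion for \emph{uniformly continuous} maps will do the work. The key input is Lemma~\ref{factexact0}.

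First I define the forward map $\Phi: \hom_{\cL\cM_k}(M^\can,N)\to \hom_{\cM od_k}(M,N^{\rm for})$ by precomposition with $\iota_M$, i.e.\ $\Phi(f)=f\circ \iota_M$. This is manifestly $k$-linear and functorial. For the reverse map $\Psi$, start with $g\in\hom_{\cM od_k}(M,N^{\rm for})$. Since $N$ lies in $\cL\cM^u_k$, Lemma~\ref{factexact0} tells us that for every open $k$-submodule $U\in\cP(N)$ there exists $I\in\cP(k)$ with $IN\subset U$; hence $g(IM)\subset IN\subset U$. This means $g$ is continuous when $M$ is given the naive canonical topology, and in fact it is uniformly continuous for the induced uniformity, because $g(m+IM)\subset g(m)+U$ for every $m\in M$. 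The separated completion functor applied to such a uniformly continuous $k$-linear map extends $g$ uniquely to a continuous $k$-linear map $\widetilde g: M^\can\to N$; we set $\Psi(g)=\widetilde g$.

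Next I verify that $\Phi$ and $\Psi$ are mutually inverse. The identity $\Phi\circ\Psi=\mathrm{id}$ is immediate from the construction of $\widetilde g$, since $\widetilde g\circ\iota_M=g$ by the universal property of completion. For $\Psi\circ\Phi=\mathrm{id}$, given $f:M^\can\to N$ continuous $k$-linear, the map $\widetilde{f\circ\iota_M}$ agrees with $f$ after precomposition with $\iota_M$; since $\iota_M(M)$ is dense in $M^\can$ and $N$ is separated, uniqueness of the continuous extension forces $\widetilde{f\circ\iota_M}=f$. Naturality in $M$ (through $\can$) and in $N$ (through for) is routine: both sides of \eqref{adjoint} are functors of $(M,N)$ and the constructions $\Phi$, $\Psi$ commute with composition by continuous $k$-linear maps on the target and by $k$-linear maps on the source.

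The only subtle point, and the step I would write out most carefully, is the existence and uniqueness of $\widetilde g$. This is \emph{not} a direct appeal to a universal property of $\what M$ as an object of $\cL\cM_k$ for arbitrary continuous targets: because of the subtlety noted in Remark~\ref{naive0}, the naive canonical topology on $\what M$ need not coincide with the one it carries as a completion. What saves us is that we only need to extend a \emph{uniformly continuous} $k$-linear map to a separated complete target, which is the standard universal property of the Hausdorff completion of a uniform space; uniform continuity of $g$ holds precisely because $N$ is uniform, via Lemma~\ref{factexact0}. This is where the hypothesis $N\in\cL\cM^u_k$ (rather than merely $N\in\cL\cM_k$) enters essentially.
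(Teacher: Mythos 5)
Your proposal is correct and follows essentially the same route as the paper: the inverse map is precomposition with $M\to M^\can$, and the forward extension rests on Lemma~\ref{factexact0}, exactly as in the paper (which packages your direct extension of the uniformly continuous $g$ as the composite $M^\can\to (N^{\rm for})^\can\to N$ using the counit map). Your explicit verification that the two assignments are mutually inverse, and your remark that the relevant universal property is that of the Hausdorff completion of a uniform space rather than any topological universal property of $\what M$ threatened by Remark~\ref{naive0}, are welcome elaborations of details the paper leaves implicit.
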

\begin{proof}
 For any $N$ in $\cL\cM^u_k$  we have a canonical continuous $k$-linear map
\beq \label{canmor}
(N^\for )^\can  \longrightarrow N \; .
\eeq  
So, for any $M$ in $\cM od_k$ and $k$-module morphism $f:M \to N^\for$, we get $\what{f} : \what{M} \to \what{N^\for}$, hence a morphism $f^\can: M^\can \to (N^{\rm for})^\can \to N$.
  Conversely, from $g: M^\can \to N$ we obtain $M \to ( M^\can)^\for \map{g^\for} N^\for$. 
  \end{proof}

\begin{lemma} \label{limcolim0} \hfill
\ben
 \item The category  $\cL\cM_{k}$ admits  limits. Its full subcategory $\cL\cM^u_{k}$  is closed by limits. 
\item  The category $\cL\cM_{k}$ admits  colimits.   
\item  The category $\cL\cM^u_{k}$ admits  colimits. 
\item  For a finite inductive system in $\cL\cM^u_{k}$, the two colimits in $\cL\cM^u_{k}$ and in $\cL\cM_{k}$ coincide. 
\een
\end{lemma}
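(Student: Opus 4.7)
My plan is to build limits in (1) from products and equalizers, colimits in (2) from coproducts and coequalizers (both constructed as the algebraic version equipped with a suitable topology and then separated completion), colimits in (3) via a left adjoint $(-)^u\colon\cL\cM_k\to\cL\cM^u_k$ to the inclusion, and to derive (4) from a cofinality argument. For (1) concretely, I would take the product $\prod_\alpha M_\alpha$ with the product topology (basic opens $\prod_\alpha U_\alpha$ with $U_\alpha\in\cP(M_\alpha)$ and $U_\alpha=M_\alpha$ outside some finite $F$), which is $k$-linear, separated, and complete since these properties pass to products, and the equalizer of $f,g\colon M\rightrightarrows N$ as the closed submodule $\ker(f-g)$ with the subspace topology. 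If each $M_\alpha$ is uniform, then for a basic open supported on a finite $F$ one picks $I_\alpha\in\cP(k)$ and $V_\alpha\in\cP(M_\alpha)$ witnessing uniformity on $M_\alpha$, and sets $I=\bigcap_{\alpha\in F}I_\alpha$ and $V=\prod_\alpha V_\alpha$ (with $V_\alpha=M_\alpha$ off $F$); uniformity of the equalizer follows by restriction.

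For (2), given $D\colon J\to\cL\cM_k$, I would form the algebraic colimit $M=\colim^{\cM od_k}D^\for$, equip $M$ with the finest $k$-linear topology making each canonical $D(j)\to M$ continuous (a $k$-submodule $U\subset M$ is open iff its preimage in each $D(j)$ is open), and pass to the separated completion. A cocone to $N\in\cL\cM_k$ induces a $k$-module map $M\to N$ continuous for this final topology, which extends uniquely to the completion.

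For (3), I would construct the reflector $(-)^u$ explicitly. Given $X\in\cL\cM_k$ with topology $\tau$, the submodules $\{IX+U':I\in\cP(k),\,U'\in\cP(X,\tau)\}$ form a filter basis of a $k$-linear topology $\tau^u\subset\tau$ that is uniform, since each basic open contains some $IX$; its separated completion $X^u=\varprojlim_{I,U'}X/(IX+U')$ lies in $\cL\cM^u_k$. Any continuous $f\colon X\to N$ with $N$ uniform factors uniquely through $X\to X^u\to N$: for $V\in\cP(N)$ with $IN\subset V$, $f^{-1}(V)\supset IX$ makes $f^{-1}(V)$ a $\tau^u$-open. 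Colimits in $\cL\cM^u_k$ are then $(\colim^{\cL\cM_k}D)^u$. For (4), I would verify that for a finite $D$ in $\cL\cM^u_k$ the colimit already computed in $\cL\cM_k$ is uniform, so the reflector acts trivially. Finite coproducts agree with finite products in the additive category $\cL\cM_k$ and preserve uniformity by (1); for the coequalizer of $f,g\colon M\to N$ with $N$ uniform, the construction of (2) yields $\varprojlim_{U\in\cP(N)}N/(\im(f-g)+U)$, but the uniformity of $N$ makes $\{IN\}_{I\in\cP(k)}$ cofinal in $\cP(N)$, so this equals $\varprojlim_I N/(\im(f-g)+IN)=(N/\im(f-g))^{\can}$, uniform by Corollary~\ref{factexact1}.

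The main obstacle is the reflector in (3): the naive idea of applying $(-)^{\can}$ to the underlying $k$-module $X^\for$ fails because it discards $\tau$ and, via Proposition~\ref{adjoint0}, admits too many outgoing morphisms. One must therefore isolate $\tau^u$ intrinsically from $\tau$, absorbing every $IX$ while remaining coarser than $\tau$, and only then complete. The cofinality step in (4) is delicate but elementary once uniformity of $N$ is invoked.
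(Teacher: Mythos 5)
Your treatments of (1) and (2) match the paper's in substance (the paper takes the projective limit with the weak topology and the algebraic colimit with the finest $k$-linear topology making the structure maps continuous, then completes; building limits from products and equalizers is only a cosmetic repackaging). For (3) you take a genuinely different route: instead of re-running the colimit construction with the topology constrained to be weaker than the naive canonical one (as the paper does), you construct a reflector $X\mapsto X^u$, the completion of $X$ in the meet of its topology with the naive canonical topology, and verify the adjunction $\hom_{\cL\cM^u_k}(X^u,N)=\hom_{\cL\cM_k}(X,N)$. This is correct (each $X/(IX+U')$ is a discrete module killed by $I$, hence uniform, and uniformity passes to limits by (1)), and it buys slightly more than the paper states, namely that $\cL\cM^u_k$ is a reflective subcategory, so that (3) follows formally from (2). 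Your diagnosis that $(-)^{\can}$ applied to $X^\for$ is \emph{not} the right reflector is also accurate.

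There is, however, a genuine error in your argument for (4), in the coequalizer step. You claim that uniformity of $N$ makes $\{IN\}_{I\in\cP(k)}$ cofinal in $\cP(N)$ and deduce
$\varprojlim_{U\in\cP(N)}N/(\im(f-g)+U)=\varprojlim_I N/(\im(f-g)+IN)=(N/\im(f-g))^{\can}$.
Uniformity only gives one direction of cofinality: every open $U$ contains some $IN$, but $IN$ need not be open, so $\im(f-g)+IN$ need not contain any member of $\{\im(f-g)+U\}_U$. The two filters are therefore not mutually cofinal and the completions differ in general. The degenerate case $f=g=0$ already refutes the identity: it would assert that every uniform $N$ equals $(N^\for)^{\can}$, which fails, e.g., for $N=\prod_n\Z/p^n$ over $k=\Z_p$ with the product of the discrete topologies (uniform, since each basic open $\{x: x_n=0,\ n\in F\}$ contains $p^{\max F}N$, but $p^mN$ is not open, so the topology is strictly coarser than the canonical one). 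Your conclusion --- that the $\cL\cM_k$-coequalizer of maps between uniform objects is already uniform --- is nonetheless true, and the correct justification is immediate from tools you already have: each discrete quotient $N/(\im(f-g)+U)$ is killed by any open ideal $I$ with $IN\subset U$, hence is uniform, and the projective limit of uniform objects is uniform by part (1). (Equivalently, and closer to what the paper leaves as ``clear by the construction'': for a finite system, any open $U$ of the algebraic colimit $M'$ pulls back to an open of each $M_\alpha$, hence contains $I_\alpha M_\alpha$, and then $U\supset \bigl(\bigcap_{\alpha}I_\alpha\bigr)M'$, so the finest admissible topology is already weaker than the naive canonical one.) With that step repaired, your proof is complete.
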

\begin{proof}

$\mathit 1.$  
Let  $(M_\alpha)_{\alpha \in A}$ be a projective system  in $\cL\cM_k$ indexed by the 
preordered set $A$. Its projective  limit  in $\cL\cM_k$ is simply 
the projective limit $M' = \limPRO{\alpha \in A} M_\alpha^\for$ equipped with the weak topology $\tau$ of the canonical projections 
$\pi_\alpha : M' \to M_\alpha$. In fact, let us show first that  $(M',\tau)$ is an object of $\cL\cM_k$.  We pick 
$a \in k$, an element $m = (m_\alpha)_\alpha \in M'$ and $U = \pi_{\alpha_0}^{-1}(U_{\alpha_0}) \in \cP(M')$, for some $\alpha_0 \in A$. 
By continuity of the scalar product of $M_{\alpha_0}$, there exist $I_{a,m,\alpha_0} \in \cP(k)$ and  $V_{a,m,\alpha_0} \in \cP(M_{\alpha_0})$
such that  
\beqa \begin{split} 
 (a+ I_{a,m,\alpha_0}) &(m +  \pi_{\alpha_0}^{-1}(V_{a,m,\alpha_0}  )) =  
\pi_{\alpha_0}^{-1}((a+ I_{a,m,\alpha_0}) (m_{\alpha_0} + V_{a,m,\alpha_0} )) \\ & \subset 
   \pi_{\alpha_0}^{-1}(am_{\alpha_0} + U_{\alpha_0})  \subset am  + U \;.
\end{split}
\eeqa
We now observe that if $(M_\alpha)_{\alpha \in A}$ is a projective system  in $\cL\cM^u_k$, for the given 
$U = \pi_{\alpha_0}^{-1}(U_{\alpha_0}) \in \cP(M')$, we may pick 
$I_{a,m,\alpha_0} = I_{\alpha_0} \in \cP(k)$ and  $V_{a,m,\alpha_0} = V_{\alpha_0} \in \cP(M_{\alpha_0})$ independent 
of $a,m$. Then for $I_U :=  I_{\alpha_0}$ and $V_U := \pi^{-1}_{\alpha_0}(V_{\alpha_0})$, we have 
$$
(a + I_U)(m +V_U) \subset am +U 
$$
for any $a \in k$ and $m \in M$, so that the scalar product of $M'$ is uniformly continuous. It is clear   that $(M',\tau)$ is indeed the projective limit of $(M_\alpha)_{\alpha \in A}$ in $\cL\cM_k$.
\par $\mathit 2.$ Let $(M_\alpha)_{\alpha \in A}$ be an inductive system  in $\cL\cM_k$ indexed by the 
preordered set $A$. Its inductive  limit  in $\cL\cM_k$ is calculated as follows. We first consider $M' = \limIND{\alpha \in A} M_\alpha^\for$ in $\cM od_k$ and let $j_\alpha: M^\for_\alpha \to M'$ 
be the natural morphisms.   
We then give to $M'$ the finest $k$-linear topology  such that all maps $j_\alpha: M_\alpha \to M'$ are continuous. So, a basis of open $k$-submodules in $M'$ consists of 
the $k$-submodules $U$ of $M'$  such that
$U_\alpha := j_\alpha^{-1}(U)$ is open in $M_\alpha$, for any $\alpha \in A$. 
Then $\limIND{\alpha \in A} M_\alpha$ is represented by the completion $M$ of $M'$ in that 
topology, equipped with the natural morphisms $i_\alpha : M_\alpha \to  M$ deduced from the $j_\alpha$'s. 
It is clear that, for any fixed $a \in k$,  a scalar product map 
$$
 M \longrightarrow M \;\;,\;\;  x \longmapsto ax
$$ 
is uniquely defined as the inductive limit of the maps 
$$
 M_\alpha \longrightarrow M_\alpha \;\;,\;\;  x \longmapsto ax  
$$
and is then continuous.   
We must check that the scalar product of $M$ is continuous for the product topology. So, let $a \in k$, $m \in M$, and let $U \in \cP(M)$ be as before;  
let $\{m_\alpha\}_{\alpha \in A}$, with 
$m_\alpha \in j_\alpha(M_\alpha)$,  be an $A$-net converging to $m \in M$. 
 So, there exists an index $\alpha_0 \in A$ such that $m_\alpha \in m + U$, for any $\alpha \geq \alpha_0$. We then pick $J \in \cP(k)$ such that 
 $J m_{\alpha_0} \in j_{\alpha_0}(U_{\alpha_0})$ ($\subset U$). 
 Then
 $$
 (a +J)(m+U) \subset am + aU +Jm + JU \subset am + J m_{\alpha_0} + JU \subset am +U \;.
 $$
 We have to prove that $M$ is indeed the inductive limit of the system $\{M_\alpha\}_\alpha$.  
For any $N$ in $\cL\cM_k$,
\beq \label{Homindlim} \begin{split}
&\hom_{\cL\cM_k}(M, N) 
= \{ \varphi \in \hom_{\cM od_k}(\limIND{\alpha \in A} M^\for_\alpha, N) \,|\, \varphi \circ j_\alpha \;\mbox{is continuous}\; \forall \,\alpha \in A\,\} =
\\
& \{ \varphi =  ( \varphi _\alpha)_{\alpha \in A} \in \limPRO{\alpha \in A} \hom_{\cM od_k}(M_\alpha^\for,N) \,|\, \varphi_\alpha \;\mbox{is continuous}\; \forall \,\alpha \in A\,\} =
\limPRO{\alpha \in A} \hom_{\cL\cM_k}(M_\alpha,N) \; .
\end{split}
\eeq  
 \par $\mathit 3.$ Suppose $(M_\alpha)_{\alpha \in A}$ is an inductive system  in $\cL\cM^u_k$ indexed by the 
preordered set $A$. Then we slightly modify the discussion of $\mathit 2$  in that we equip $M'$ the finest $k$-linear topology, \emph{weaker than the naive 
canonical topology},  such that all maps $j_\alpha: M_\alpha \to M'$ are continuous.
Then the proof of $\mathit 2$ can be adapted to the present situation. 
 \par $\mathit 4.$ Is clear by the construction. 
\end{proof}

\begin{notation} \label{uniflim} For an inductive system  $(M_\alpha)_{\alpha \in A}$ in $\cL\cM_k$ (resp. in $\cL\cM^u_k$), the inductive limit 
of $(M_\alpha)_{\alpha \in A}$ in $\cL\cM_k$ (resp. in $\cL\cM^u_k$) 
will be denoted  $\limIND\alpha M_\alpha$ (resp. 
${\limIND\alpha}^u M_\alpha$). If $(M_\alpha)_{\alpha \in A}$  is an inductive system in $\cL\cM^u_k$, we have a canonical surjective morphism 
\beq  \label{uniflim0} 
\limIND\alpha M_\alpha \longrightarrow {\limIND\alpha}^u M_\alpha
\eeq 
in  $\cL\cM_k$.
\end{notation}
\begin{rmk} \label{uniflimdiscr}
If $k$ is discrete an inductive system $(M_\alpha)_{\alpha \in A}$  in $\cL\cM_k$  is also an inductive system in $\cL\cM^u_k$ and
\eqref{uniflim0}  is an isomorphism.
\end{rmk}
\begin{notation} \label{cohnhd}
Let $(M_\alpha)_{\alpha \in A}$ be 
an inductive system  in $\cL\cM_k$ with transition morphisms $j_{\alpha,\beta}:M_\alpha \to M_\beta$ for $\alpha \leq \beta$. 
For any $\alpha \in A$ let $\cP(M_\alpha)$ denote the set of open $k$-submodules of $M_\alpha$. Then 
\emph{a coherent system  of open $k$-submodules of $(M_\alpha)_{\alpha \in A}$} is a system $\sP := (P_\alpha)_{\alpha \in A}$ such that for any $\alpha \leq \beta$ in $A$, 
$j_{\alpha,\beta}^{-1}(P_\beta) = P_\alpha$. The set $\sC((M_\alpha)_{\alpha \in A})$ of coherent systems of open $k$-submodules of $(M_\alpha)_{\alpha \in A}$ forms a filter
of $k$-submodules of $\prod_{\alpha \in A} M_\alpha^\for$. 
\end{notation}
\begin{lemma} \label{expldirlim} We use the notation of \eqref{cohnhd}.
\hfill \ben
\item Let $(M_\alpha)_{\alpha \in A})$ be 
an inductive system  in $\cL\cM_k$. Then
\beq  \label{expldirlim1}
\limIND\alpha M_\alpha = \limPRO{\sP \in \sC((M_\alpha)_{\alpha \in A})}  M_\alpha/P_\alpha \;.
\eeq
\item Let 
$(M_\alpha)_{\alpha \in A}$ is an inductive system  in $\cL\cM^u_k$. Then 
\beq  \label{expldirlim2}
{\limIND\alpha}^u M_\alpha = \limPRO{I \in \cP(k)} \limIND{\alpha \in A} M_\alpha/ \ol{I M_\alpha} \;, 
\eeq
in which
$$
\limIND{\alpha \in A} M_\alpha/ \ol{I M_\alpha}  = {\limIND{\alpha \in A}}^u M_\alpha/ \ol{I M_\alpha} 
$$
since $k/I$ is discrete. 
\een
\end{lemma}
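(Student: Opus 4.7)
For part (1), my plan is to unfold the construction of Lemma~\ref{limcolim0}(2), which exhibits $\limIND\alpha M_\alpha$ as the separated completion of $M' := \limIND{\alpha \in A} M_\alpha^\for$ (computed in $\cM od_k$) endowed with the finest $k$-linear topology $\tau$ making every $j_\alpha \colon M_\alpha \to M'$ continuous. By the definition of completion,
$$
\limIND\alpha M_\alpha = \limPRO{U} M'/U,
$$
where $U$ ranges over the open $k$-submodules of $(M',\tau)$. It then suffices to identify this indexing set with $\sC((M_\alpha)_{\alpha \in A})$ and to compute the quotients $M'/U$.

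I would set up the identification as $U \longmapsto (j_\alpha^{-1}(U))_\alpha$, with inverse $\sP = (P_\alpha)_\alpha \longmapsto P_\sP := \bigcup_\alpha j_\alpha(P_\alpha)$. Directedness of $A$ ensures $P_\sP$ is a $k$-submodule. The key point to check is $j_\alpha^{-1}(P_\sP) = P_\alpha$: the inclusion $\supset$ is tautological, while for $\subset$, if $j_\alpha(x) = j_\beta(y)$ with $y \in P_\beta$, the standard description of colimits in $\cM od_k$ yields $\gamma \geq \alpha, \beta$ with $j_{\alpha,\gamma}(x) = j_{\beta,\gamma}(y) \in j_{\beta,\gamma}(P_\beta) \subset P_\gamma$, whence $x \in j_{\alpha,\gamma}^{-1}(P_\gamma) = P_\alpha$ by coherence. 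This simultaneously shows that $P_\sP$ is $\tau$-open and that the two assignments are mutually inverse. Exactness of colimits in $\cM od_k$ then yields $M'/P_\sP \cong \limIND{\alpha} M_\alpha/P_\alpha$, and substitution into the display above produces (\ref{expldirlim1}). I expect this bijection argument, with its essential use of the coherence condition, to be the main obstacle; the rest is bookkeeping.

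For part (2), I would invoke Lemma~\ref{limcolim0}(3), which realizes ${\limIND\alpha}^u M_\alpha$ as the completion of $M'$ in the finest $k$-linear topology $\tau^u$ weaker than the naive canonical topology and making every $j_\alpha$ continuous. Hence the open $k$-submodules of $(M',\tau^u)$ are exactly those $\tau$-open $U$ which satisfy $IM' \subset U$ for some $I \in \cP(k)$. Using $j_\alpha(IM_\alpha) \subset IM'$ together with the fact that every open submodule is closed, the bijection from part (1) translates this extra condition into $\ol{IM_\alpha} \subset P_\alpha$ for every $\alpha$. A cofinal indexing of the open $k$-submodules of $(M',\tau^u)$ is therefore given by pairs $(I,\sP)$ with $\sP$ a coherent system of open $k$-submodules of the inductive system $(M_\alpha/\ol{IM_\alpha})_{\alpha \in A}$. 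Swapping the order of the iterated projective limit over $I$ and $\sP$ and re-applying part (1) to this latter inductive system produces
$$
{\limIND\alpha}^u M_\alpha = \limPRO{I \in \cP(k)} \limIND{\alpha \in A} M_\alpha/\ol{IM_\alpha},
$$
and since $k/I$ is discrete, Remark~\ref{uniflimdiscr} identifies the inner colimit formed in $\cL\cM_k$ with the one formed in $\cL\cM^u_k$, completing the proof of (\ref{expldirlim2}).
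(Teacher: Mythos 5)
Your proof is correct, and it supplies exactly the details the paper omits: the paper's own ``proof'' of this lemma is the single word ``Clear'', so your unfolding of the construction in Lemma~\ref{limcolim0} together with the bijection $U \leftrightarrow \sP$ between $\tau$-open submodules of $M'$ and coherent systems is precisely the argument the author is implicitly relying on, and both key steps (the identity $j_\alpha^{-1}(P_\sP)=P_\alpha$ via coherence, and the translation of ``weaker than the naive canonical topology'' into $\ol{IM_\alpha}\subset P_\alpha$ using that open submodules are closed) are carried out correctly. The only caveat worth recording is that your argument uses directedness of $A$ in two essential places (to see that $\bigcup_\alpha j_\alpha(P_\alpha)$ is a submodule and that $j_\alpha(x)=j_\beta(y)$ equalizes at some $\gamma$), whereas the statement is nominally for an arbitrary preordered index set; since the paper's notation $\sC((M_\alpha)_\alpha)$ and all its applications presuppose filtered systems, this is a restriction inherited from the paper rather than a gap in your proof, but it should be stated as a hypothesis.
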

\begin{proof} Clear.
\end{proof}
\begin{exa} \label{difflim} Consider the inductive system 
$$
(\Z_p,p) := \Z_p \map{p \cdot} \Z_p \map{p \cdot} \Z_p \map{p \cdot} \dots \; ,
$$
where $p \cdot : \Z_p \to \Z_p$ is multiplication by $p$. So, $(\Z_p,p)$ is an inductive system in $\cL\cM_k$, for $k = \Z_p$. Then
$$
\limind (\Z_p,p) = \Q_p \;\;\mbox{while} \;\; {\limind}^u (\Z_p,p) = (0) \;.
$$
\end{exa}
\begin{rmk} \label{openind} 
 We conclude from  Proposition~\ref{adjoint0} that  the functor 
 \beqa \begin{split}
 (-)^\can : \cM od_k & \longrightarrow  \cL\cM_k^u \\
 M &\longmapsto M^\can
\end{split}
 \eeqa
 commutes with inductive limits, that is, for any inductive system  $(M_\alpha)_{\alpha \in A}$ in $\cM od_k$,
 \beq \label{indcomm}
 ({\limIND\alpha} M_\alpha)^\can = {\limIND\alpha}^u M_\alpha^\can \;,
 \eeq
  while $M \mapsto M^{\rm for}$ commutes with the projective ones. Actually, for any projective system  $(M_\alpha)_{\alpha \in A}$ in $\cL\cM_k$,
 \beq \label{projcomm}(\limPRO{\alpha \in A} M_\alpha)^\for  =  \limPRO{\alpha \in A} M_\alpha^\for \;.
\eeq
On the other hand it is clear that,  if for any $\alpha \leq \beta$ the morphism $M_\alpha \to M_\beta$ of an inductive system 
$(M_\alpha)_{\alpha \in A}$   in $\cL\cM_k$ is open, then 
 \beq \label{indcomm1}(\limIND{\alpha \in A} M_\alpha)^\for  =  \limIND{\alpha \in A} M_\alpha^\for 
\eeq
equipped with the topology for which a fundamental system of open $k$-submodules is given by the set of 
$j_\alpha(P)$, for $\alpha \in A$ and for $P$ an open $k$-submodule of  
$M_\alpha$.  
\end{rmk}
\begin{lemma} \label{kercoker} For morphism $f: M \to N$ in $\cL\cM_k$   we have the following description. \hfill
\ben
\item $\Ker (f) = \Ker (f^\for)$, endowed with the subspace topology in $M$.
\item 
$$\Coker (f)
= \limPRO{Q \in \cP(N)} N^\for/(Q^\for + \Im (f^\for)) \;,
$$ 
where every $N^\for/(Q^\for + \Im (f^\for))$ is equipped with the discrete topology.
\item If $f$ is open, then $\Coker (f) = \Coker (f^\for)$ equipped with the discrete topology. 
\een
\end{lemma}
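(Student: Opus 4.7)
For (1), I would first observe that $\Ker(f^\for)$ is a closed $k$-submodule of $M$: it is the preimage of $\{0\}$ under the continuous map $f$, and $\{0\}$ is closed in the separated space $N$. Equipped with the subspace topology from $M$, it is linearly topologized (a fundamental system of open submodules is $\{U \cap \Ker(f^\for) : U \in \cP(M)\}$), and complete because it is a closed subspace of a complete space. One then checks the universal property directly: any morphism $g: P \to M$ in $\cL\cM_k$ with $f \circ g = 0$ factors uniquely through $\Ker(f^\for)$ as a $k$-module map, and the factorization is automatically continuous because its graph lies in the subspace topology.

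For (2), I would construct the cokernel in $\cL\cM_k$ by the following two-step procedure: first take the quotient $k$-module $N^\for/\Im(f^\for)$ equipped with the quotient topology from $N$, then pass to its separated completion. The quotient topology on $N^\for/\Im(f^\for)$ is $k$-linear, with a fundamental system of open $k$-submodules given by $(Q^\for + \Im(f^\for))/\Im(f^\for)$ for $Q \in \cP(N)$, since the projection $N \to N^\for/\Im(f^\for)$ is open. Hence the separated completion is
\[
\limPRO{Q \in \cP(N)} \bigl(N^\for/\Im(f^\for)\bigr)\big/\bigl((Q^\for + \Im(f^\for))/\Im(f^\for)\bigr) = \limPRO{Q \in \cP(N)} N^\for/(Q^\for + \Im(f^\for)),
\]
where each quotient on the right is discrete because it is a further quotient of the discrete module $N^\for/Q^\for$. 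The universal property follows because a morphism $g : N \to P$ in $\cL\cM_k$ satisfies $g \circ f = 0$ iff $g^\for$ kills $\Im(f^\for)$; such a $g$ factors through the quotient $N^\for/\Im(f^\for)$ and is continuous iff $g^{-1}(Q')$ is open in $N$ for every $Q' \in \cP(P)$, which is precisely the condition that lets us factor through each $N^\for/(Q^\for + \Im(f^\for))$, and hence through the projective limit (using part~1 of Lemma~\ref{limcolim0} applied to $\cL\cM_k$).

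For (3), suppose $f$ is open. Then $\Im(f^\for) = f(M)$ is an open $k$-submodule of $N$, hence itself belongs to $\cP(N)$. In the projective system of (2), once $Q \subseteq \Im(f^\for)$ we have $Q^\for + \Im(f^\for) = \Im(f^\for)$, so the transition maps stabilize and the limit is simply $N^\for/\Im(f^\for) = \Coker(f^\for)$, which is discrete because $\Im(f^\for)$ is open.

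I expect (2) to be the only nontrivial step, the delicate point being to verify that the separated completion of the quotient topology on $N^\for/\Im(f^\for)$ really equals the displayed projective limit (using that $Q \mapsto Q^\for + \Im(f^\for)$ runs through a cofinal family of open submodules containing $\Im(f^\for)$) and that the universal property genuinely reduces to a compatible system of continuous maps into discrete quotients.
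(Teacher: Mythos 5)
Your argument is correct; note that the paper states Lemma~\ref{kercoker} without any proof, treating all three descriptions as routine, and your proposal is exactly the standard verification it implicitly relies on: the kernel is a closed (hence complete) subobject with the subspace topology, the cokernel is the separated completion of the quotient topology on $N^\for/\Im(f^\for)$, which is the displayed projective limit because the open projection sends $\cP(N)$ to a fundamental system of open submodules, and openness of $f$ makes that system stabilize at the discrete module $N^\for/\Im(f^\for)$. The only point worth making explicit if you wrote this up in full is that the completed quotient is again an object of $\cL\cM_k$ (continuity of the scalar product extends from the dense image of $N$), and that uniqueness of the factorization in~(2) uses density of the image of $N$ in the limit.
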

\begin{rmk} \label{imcoim}
The categories $\cL\cM_k$  and $\cL\cM_k^u$ are not in general abelian. In particular, for a morphism $f:M \to N$ in any of the previous categories, we have a canonical mono/epi-morphism
  $\ol{f} : \Coim (f) \to \Im (f)$ which permits to regard $\Coim (f)$ as a dense $k$-submodule of $\Im(f)$, whose topology is finer than the subspace topology induced by 
  the topology of $\Im(f)$, \ie by the one of $N$. When $\ol{f}$ is an isomorphism  
  we say that $f$ is \emph{strict}. 
  \end{rmk}

\begin{defn} \label{sub-obj} 
 A \emph{sub-object} $N$ of some object $M$ in  $\cL\cM^u_k$ or in $\cL\cM_k$  is a sub-object $N^\for$ of $M^\for$ which is closed in $M$ and is equipped with the subspace topology. 
 The morphism $i_N : N \to M$ will be called the  \emph{embedding} 
 of the sub-object $N$ of $M$.   If $i_N$ is a continuous open injection, then $N$ is an \emph{open sub-object} of $M$ and $i_N$ will be called the \emph{open embedding} 
 of $N$ in $M$.   
 \end{defn}
 \begin{exa} \label{bicampoincompleto} Let $\sC (\Q_p,k)$ (resp. $\sC_\unif (\Q_p,k)$ be the $k$-module of continuous (resp. uniformly continuous) functions $\Q_p \to k$, equipped with 
 the topology of simple (resp. uniform) convergence. Then the natural morphism  $\sC_\unif (\Q_p,k) \to \sC (\Q_p,k)$  in $\cL\cM_k$ is injective and has dense image, so is not the 
 embedding of a sub-object of $\sC (\Q_p,k)$ in our sense. 
 \end{exa}
\begin{defn} \label{quotient}   
  If $i_N : N \to M$ is a sub-object of $M$ in $\cL\cM_k$ (resp. $\cL\cM^u_k$) we set
$$M/N = \Coker (i_N)   
$$
in the category $\cL\cM_k$ (resp. $\cL\cM^u_k$). So, 
$$M/N = \limPRO{Q \in \cP(M)} M^\for/(Q^\for + N^\for) \;,
$$ 
where every $M^\for/(Q^\for + N^\for)$ is equipped with the discrete topology.  An \emph{exact sequence} is any  sequence isomorphic to a sequence 
of the form
$$
0 \longrightarrow N \map{i_N} M  \longrightarrow  M/N  \longrightarrow 0 \;. 
$$
 \end{defn}
 \begin{rmk}
The embedding $i_N: N \hookrightarrow M$ of a sub-object is the same as a strict injection, since the latter is necessarily closed. Then 
  $N$ is an open sub-object of $M$ if and only if the morphism $i_N$ is an open injection.
  \end{rmk}
\begin{lemma} \label{openstrict}
Any open map is strict. 
\end{lemma}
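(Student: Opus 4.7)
By Remark~\ref{imcoim}, strictness of $f:M\to N$ amounts to proving that the canonical continuous mono/epi-morphism $\ol{f}:\Coim(f)\to\Im(f)$ is an isomorphism in $\cL\cM_k$. The strategy is to show, under the openness hypothesis, that both $\Coim(f)$ and $\Im(f)$ already coincide with $M^\for/\Ker(f^\for)$ and $f(M^\for)$ respectively (as topological $k$-modules, without any further closure or completion), and that $\ol{f}$ is a homeomorphism between them.

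First I would dispose of the image side. Since $M$ is open in itself and $f$ is open, $f(M)$ is an open $k$-submodule of $N$; every open subgroup of a topological abelian group is automatically closed, so $f(M)$ is in fact closed in $N$. Consequently $\Im(f)=\ol{f(M)}=f(M)$, with the subspace topology from $N$, and it is complete because it is closed inside the complete space $N$. On the kernel side, $N$ is separated so $\{0\}$ is closed in $N$; by continuity of $f$, $\Ker(f)=f^{-1}(\{0\})$ is closed in $M$, and therefore the raw quotient $M^\for/\Ker(f^\for)$, endowed with the quotient topology from $M$, is $k$-linearly topologized and separated.

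Next, I would check that the $k$-linear bijection $\ol{f}^\for:M^\for/\Ker(f^\for)\to f(M^\for)$ is a homeomorphism. Continuity is automatic from the universal property of the quotient topology together with the continuity of $f$. For openness, if $U$ is open in $M^\for/\Ker(f^\for)$ and $\pi:M\to M^\for/\Ker(f^\for)$ denotes the projection, then $\pi^{-1}(U)$ is open in $M$, so $\ol{f}^\for(U)=f(\pi^{-1}(U))$ is open in $N$ by the hypothesis that $f$ is open, and hence open in $f(M)$ for the subspace topology.

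Finally I would conclude: since $f(M)=\Im(f)$ is complete and $\ol{f}^\for$ is a topological isomorphism, the raw quotient $M^\for/\Ker(f^\for)$ is itself already complete, and so by Lemma~\ref{kercoker}(2) (applied to the inclusion $i_{\Ker(f)}:\Ker(f)\hookrightarrow M$) it equals $\Coim(f)=M/\Ker(f)$ in $\cL\cM_k$. The map $\ol{f}$ is therefore an isomorphism in the category, so $f$ is strict. The only subtle point, and the place where the openness hypothesis really does work, is the identification $\Coim(f)=M^\for/\Ker(f^\for)$: without openness the raw quotient need not be complete, and one would pick up a genuine completion in forming $\Coim(f)$.
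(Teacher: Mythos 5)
Your proof is correct and follows essentially the same route as the paper: show that $f(M)$ is an open (hence closed) sub-object of $N$ carrying the subspace topology, and that $\ol{f}$ carries open submodules of $\Coim(f)$ to open submodules of $\Im(f)$, so that $\ol{f}$ is a continuous open bijection and hence an isomorphism. You are in fact more careful than the paper's own two-line argument on the one genuinely delicate point, namely that the raw quotient $M^\for/\Ker(f^\for)$ is already complete (being homeomorphic to the closed, hence complete, submodule $f(M)$ of $N$) and therefore coincides with $\Coim(f)$ as defined via Lemma~\ref{kercoker}.
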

\begin{proof}
If $f:M \to N$ is open, then $\Im(f) = \Im(f^\for)$ is an open sub-object of $N$, equipped with the subspace topology. Moreover, any open submodule of $\Coim(f)$ is of 
  the form 
  $\ol{P} := P/\Ker (f)$ where $P \in \cP(M)$ contains $\Ker (f)$. Then $\ol{f}(\ol{P}) = f(P)$, an open subobject of $\Im(f)$. So, $\ol{f}$ is an isomorphism.
\end{proof}

  \begin{rmk} \label{prodiscrete0} Let $M$ be an object of $\cL\cM_k$ whose topology is discrete. Then,
 \beq \label{prodiscrete00} 
 M = \bigcup_{I \in \cP(k)} M_{[I]} 
 \eeq
where  
$$ M_{[I]} := \{m \in M \,|\, am = 0\,,\,\forall a \in I\,\}\;.
$$
Notice that $M_{[I]}$, but not $M$ in general,  is an object of $\cL\cM^u_k$. More precisely we have:
\par
\emph{An object of $\cL\cM_k$ which carries the  discrete topology is uniform if and only if it is an object of  $\cL\cM_{k/I}$, for some open ideal $I$ of $k$. }
\par  
Notice that by Remark~\ref{openind}  formula \eqref{prodiscrete00} can also be written as
 \beq \label{prodiscrete000} 
 M = \limIND{I \in \cP(k)} M_{[I]} 
 \eeq
the colimit  in $\cL\cM_k$ of a filtered inductive system of discrete objects of $\cL\cM^u_k$ and injections. 
 \end{rmk}
\begin{lemma} \label{limcolim} \hfill \ben 
\item Any object $M$ in $\cL\cM_{k}$ is a projective limit of a filtered projective system of discrete $k$-modules and surjections
 \beq \label{prodiscrete11}
M  = \limPRO{P \in \cP(M)} M/P
 \;.
 \eeq 
More precisely, 
 \beq \label{prodiscrete21}
M = \limPRO{P \in \cP(M)}\limIND{I \in \cP(k)} (M/P)_{[I]}
 \;,
 \eeq
 where any $(M/P)_{[I]}$ is a  discrete $k/I$-module.
 \item Any object $M$ in $\cL\cM^u_{k}$ is a projective limit of a filtered projective system of discrete uniform $k$-modules and surjections 
 \beq \label{prodiscrete2}
M = \limpro_P M/P
 \;, 
 \eeq
 where $P$ runs over a fundamental system of open $k$-submodules. Equivalently, any $M/P$ in \eqref{prodiscrete2} is a  discrete $k/I$-module, for some $I = I_P \in \cP(k)$. 
\item  Let $M, N$ be objects of 
$\cL\cM^u_k$ and let $\cP(M)$, $\cP(N)$ be  fundamental systems of open $k$-submodules in $M$ and $N$, respectively. Then 
\beq \label{modhom}
\hom_{\cL\cM^u_k}(M,N) = \limpro_{Q \in \cP(N)}  \limind_{P \in \cP(M)} \hom_k(M/P,N/Q) 
\eeq
as a $k^\for$-module.  Notice that both $M/P$ and $N/Q$ are $k/I$-modules for some $I \in \cP(k)$, so that    $\hom_k(M/P,N/Q) =  \hom_{k/I}(M/P,N/Q)$ 
for any such $I$. 
\item  Let $M, N$ be objects of 
$\cL\cM_k$ and let $\cP(M)$, $\cP(N)$ be  fundamental systems of open $k$-submodules in $M$ and $N$, respectively. Then 
\beq \label{modhom2}
\hom_{\cL\cM_k}(M,N) = \limPRO{Q \in \cP(N)}  \limind_{P \in \cP(M)} \limPRO{I \in \cP(k)} \hom_{k/I}((M/P)_{[I]},(N/Q)_{[I]}) 
\eeq
as a $k^\for$-module.  
\een
\end{lemma}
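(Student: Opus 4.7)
Parts 1 and 2 say that any object of $\cL\cM_k$ (resp.\ $\cL\cM^u_k$) is the projective limit of its discrete quotients by open $k$-submodules. For part 1, the isomorphism $M \iso \limPRO{P \in \cP(M)} M/P$ in $\cL\cM_k$ is precisely the content of completeness and separation of $M$, each $M/P$ being endowed with the discrete topology. To refine this into \eqref{prodiscrete21}, I apply Remark~\ref{prodiscrete0}, in particular formula \eqref{prodiscrete000}, to each discrete $M/P$. For part 2, since $\cL\cM^u_k$ is closed under limits in $\cL\cM_k$ by Lemma~\ref{limcolim0}, the same identification holds in $\cL\cM^u_k$. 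The uniformity of $M$ together with Lemma~\ref{factexact0} produces, for each $P \in \cP(M)$, an ideal $I_P \in \cP(k)$ with $I_P M \subset P$, so that $M/P$ is a $k/I_P$-module; by the equivalence recorded in Remark~\ref{prodiscrete0}, this is exactly the condition for the discrete object $M/P$ to be uniform.

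For part 3, I would use the representation $N = \limpro_Q N/Q$ from part 2 together with the universal property of projective limits to get $\hom_{\cL\cM^u_k}(M, N) = \limpro_Q \hom_{\cL\cM^u_k}(M, N/Q)$. Because $N/Q$ is discrete, a continuous $k$-linear map $M \to N/Q$ has open kernel and so factors through some $M/P$, which yields $\hom_{\cL\cM^u_k}(M, N/Q) = \limind_P \hom_{\cL\cM^u_k}(M/P, N/Q)$. Part 2 provides ideals $I_P, I_Q$ killing $M/P$ and $N/Q$ respectively; picking $I \in \cP(k)$ contained in $I_P \cap I_Q$, both quotients are $k/I$-modules and any $k$-linear map between them is automatically continuous (both being discrete), so $\hom_{\cL\cM^u_k}(M/P, N/Q) = \hom_k(M/P, N/Q)$, establishing \eqref{modhom}.

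Part 4 proceeds along the same formal lines, except that the discrete quotients $M/P$ and $N/Q$ are no longer $k/I$-modules. I would reduce to the core identification
\[
\hom_{\cL\cM_k}(M/P, N/Q) = \limPRO{I \in \cP(k)} \hom_{k/I}\bigl((M/P)_{[I]}, (N/Q)_{[I]}\bigr)
\]
by observing that any continuous $k$-linear $\varphi: M/P \to N/Q$ restricts, for each $I \in \cP(k)$, to a $k/I$-linear map $\varphi_I: (M/P)_{[I]} \to (N/Q)_{[I]}$ (since $x \in (M/P)_{[I]}$ forces $I\varphi(x) = \varphi(Ix) = 0$), and that this family is compatible under the restriction morphisms of the projective system indexed by $I \in \cP(k)$. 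Conversely, the decomposition $M/P = \bigcup_I (M/P)_{[I]}$ from Remark~\ref{prodiscrete0} allows one to glue any compatible family $(\varphi_I)$ into a well-defined $k$-linear $\varphi$, which is automatically continuous since $N/Q$ is discrete. The main technical obstacle is precisely this last identification: one has to track the directions of the transition maps in $\{\hom_{k/I}((M/P)_{[I]}, (N/Q)_{[I]})\}_I$ and to verify well-definedness of the glued $\varphi$. Once it is in place, formula \eqref{modhom2} follows by combining it with the two previous formal steps (commutation of $\hom$ with the outer projective limit in $Q$, and factorization of continuous maps into discrete modules through open quotients in $P$).
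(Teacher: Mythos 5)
Your proposal is correct and follows essentially the same route as the paper, which simply declares parts 1 and 2 clear from completeness plus Remark~\ref{prodiscrete0} and derives parts 3 and 4 from them; you have merely filled in the details (factorization of continuous maps into discrete quotients, the direction of the transition maps in the inner limit over $I$) that the paper leaves implicit.
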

\begin{proof} \hfill
\par The statement in $\mathit 1$ means that
for an object $M$ in $\cL\cM_{k}$, the underlying $k^\for$-module $M^\for$  is a projective limit in $\cM od_k$ of a filtered projective system 
 of $k^\for$-modules 
  \beq \label{prodiscrete1}
M^\for = \limPRO{P \in \cP(M)} M^\for/P^\for
 \;.
 \eeq 
  The topology of $M$ is the weak topology of the projections $\pi_{P}:M^\for \to M^\for/P^\for$, where the target is equipped with the discrete topology. All this is clear. 
  The remaining part of $\mathit 1$ follows from Remark~\ref{prodiscrete0}.
\par $\mathit 2$ follows from the definition and from Remark~\ref{prodiscrete0}. 
\par $\mathit 3$, $\mathit 4$ follow from $\mathit 2$ and $\mathit 1$, respectively. 
\end{proof}

\begin{lemma} \label{exactsub} Let $f:N \to M$ be a  morphism in $\cL\cM_k$  and let \eqref{prodiscrete11} be 
the representation of $M$ as a limit of discrete objects where $\pi_P : M \to M/P$ and $\pi_{P,Q} : M/P \to M/Q$ are the canonical projections, for any $P \subset Q$ in $\cP(M)$. 
For any $P \in \cP(M)$ let $\ol{N}_P$  be the set-theoretic image of the natural morphism $f_{/P} : N \to M/P =: \ol{M}_P$. 
Then 
\ben
\item $f$ is the embedding of a closed sub-object if and only if  the morphism 
\beq \label{subpro}
F: N \longrightarrow  \limPRO{P \in \cP(M)} \ol{N}_P
\eeq 
deduced from $\{f_{/P}\}_P$ by the universal property of the projective limit is an isomorphism.
\item Let $N$ be a sub-object of $M$ as in \eqref{subpro}. Then the quotient $M/N$ is
\beq \label{subquot}
M/N \iso \limPRO{P \in \cP(M)} \ol{M}_P/\ol{N}_P \;,
\eeq 
\item Assume $f$ is a closed embedding. Then $f$ is open if and only if there exists $P_0 \in \cP(M)$ such that,  for any $P \subset Q \subset P_0$ in $\cP(M)$, the map $\pi_{P,Q} : M/P \to M/Q$ 
induces a surjection $\ol{N}_P \to \ol{N}_Q$. 
\een
 \end{lemma}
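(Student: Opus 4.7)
My plan is to handle the three parts in sequence, leveraging the description of $M$ as the projective limit of its discrete quotients from Lemma~\ref{limcolim}(1).

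For part 1, the key step is to identify $\limPRO{P \in \cP(M)} \ol{N}_P$ canonically with the closure $\ol{f(N)}$ of $f(N)$ in $M$, equipped with the subspace topology. Since $\ol{N}_P = (f(N) + P)/P \subset M/P$, a compatible thread $(x_P)_P$ in $\limPRO{P} \ol{N}_P$ is simply an element $x = (x_P)_P$ of $\limPRO{P} M/P = M$ satisfying $x \in f(N) + P$ for every $P \in \cP(M)$, and the intersection $\bigcap_P (f(N) + P)$ equals $\ol{f(N)}$ because $M$ is separated. The weak topology on this projective limit coincides with the subspace topology induced from $M$. Under this identification, $F$ is just $n \mapsto f(n)$ viewed as landing in $\ol{f(N)}$; hence $F$ is bijective iff $f$ is injective with closed image, and it is a homeomorphism iff, in addition, the topology on $N$ agrees with the subspace topology. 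These three conditions together are precisely Definition~\ref{sub-obj}.

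Part 2 will follow directly from Lemma~\ref{kercoker}(2), which gives $M/N = \limPRO{Q \in \cP(M)} M/(Q + N)$ with each quotient discrete. The third isomorphism theorem in $\cM od_k$ identifies $M/(Q+N) \iso \ol{M}_Q/\ol{N}_Q$ canonically and compatibly with the transition maps induced by $\pi_{P,Q}$ for $P \subset Q$, so passing to the projective limit yields \eqref{subquot}.

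For part 3, I will first reduce to showing that $f$ is open iff $f(N)$ is open in $M$. One direction is immediate: $f(N)$ is the image under $f$ of the open set $N$. Conversely, since $f$ is a closed embedding, $N$ carries the subspace topology, so every open sub-$k$-module of $N$ has the form $N \cap W$ for some $W$ open in $M$, and then $f(N \cap W) = f(N) \cap W$ is open as the intersection of two open $k$-submodules of $M$. Since $f(N)$ is itself a $k$-submodule, $f(N)$ is open iff some $P_0 \in \cP(M)$ is contained in $f(N)$. To translate this into the statement's condition, note that $f(N)$, being closed, equals $\bigcap_{P \in \cP(M)} (f(N) + P)$; hence $P_0 \subset f(N)$ is equivalent to $P_0 \subset f(N) + P$ for every $P \in \cP(M)$. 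The condition that $\pi_{P,Q}$ induces a surjection $\ol{N}_P \to \ol{N}_Q$ in the relevant sense---namely that $\pi_{P,Q}^{-1}(\ol{N}_Q) = \ol{N}_P$, which unpacks to $f(N) + Q = f(N) + P$, i.e.\ $Q \subset f(N) + P$---applied with $Q = P_0$ and $P \subset Q$ varying, recovers exactly $P_0 \subset f(N) + P$ for all small $P$, hence $P_0 \subset f(N)$. Conversely, given $P_0 \subset f(N)$ and any $P \subset Q \subset P_0$, we get $Q \subset P_0 \subset f(N) \subset f(N) + P$.

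The main obstacle I expect is the topological identification in part 1: ensuring that the weak projective-limit topology on $\limPRO{P} \ol{N}_P$ really matches the subspace topology from $M$ on $\ol{f(N)}$, rather than being finer. Part 3 also requires care in parsing the stated condition on $\pi_{P,Q}$ as a compact encoding of $Q \subset f(N) + P$ for all $P \subset Q \subset P_0$; the automatic surjectivity $\pi_{P,Q}(\ol{N}_P) = \ol{N}_Q$ shows that the non-trivial content must lie in the cartesian-square interpretation.
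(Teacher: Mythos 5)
Your proposal is correct, and it is substantially more informative than the paper's own treatment, which disposes of this lemma with the single sentence ``All the assertions are clear.'' Your identification of $\limPRO{P \in \cP(M)} \ol{N}_P$ with the closure $\ol{f(N)}$ of $f(N)$ in $M$, endowed with the subspace topology (both have $\{\ol{f(N)} \cap P\}_P$ as a basis of open submodules), is exactly what makes part 1 work, and part 2 is indeed immediate from Lemma~\ref{kercoker} together with the canonical isomorphisms $M/(Q+N) \cong \ol{M}_Q/\ol{N}_Q$. The most valuable point in your write-up concerns part 3: since $\ol{N}_P = (f(N)+P)/P$ is by definition the image of $f_{/P}$, the map $\ol{N}_P \to \ol{N}_Q$ induced by $\pi_{P,Q}$ is \emph{always} surjective, so the condition as literally written is vacuous and the stated equivalence fails (take $N=(0)$ inside $M=\Z_p$). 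Your reinterpretation --- that the relevant condition is $\pi_{P,Q}^{-1}(\ol{N}_Q) = \ol{N}_P$, equivalently $Q \subset f(N)+P$ --- is the reading under which the statement becomes true, and your verification that it is equivalent to $P_0 \subset f(N)$, hence to openness of $f(N)$ and of $f$, is complete; the only step worth spelling out is that $P_0 \subset f(N)+P$ for $P$ ranging over the cofinal family of $P \subset P_0$ already forces $P_0 \subset \bigcap_{P' \in \cP(M)}(f(N)+P') = \ol{f(N)} = f(N)$, which you use implicitly. Note that the same correction should be propagated to Remark~\ref{opencanon} and to the proof of part 4 of Lemma~\ref{basechange11}, both of which quote part 3 in the literal ``surjectivity'' form.
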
 
 \begin{proof}  All the assertions are clear. 
 \end{proof}
 \begin{prop}
\label{canon} An object $M$ of $\cL\cM_k$  is canonical   
if and only if it is a projective limit  of a projective system $\{\ol{M}_I \}_{I \in \cP(k)}$, indexed by $I \in \cP(k)$, where each  $\ol{M}_I$ is a discrete $k/I$-module and, for any $I \subset J$ in $\cP(k)$, the morphism 
$\ol{M}_I \to \ol{M}_J$ is a surjection. So, 
\beq \label{canform}
M = \limPRO{I \in \cP(k)} \ol{M}_I \;.
\eeq 
\end{prop}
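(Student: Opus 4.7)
The ``only if'' direction will be tautological: if $M \simeq N^\can$ then by Definition~\ref{naivedef}, $M = \limPRO{I \in \cP(k)} N/IN$, so the choice $\ol{M}_I := N/IN$ exhibits $M$ in the desired form, with each $N/IN$ a discrete $k/I$-module and all transition maps surjective.

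For the ``if'' direction, given $M = \limPRO{I \in \cP(k)} \ol{M}_I$ with the stated properties, my plan is to take $N := M^{\for}$ and show that the unit
$$
\alpha : M \longrightarrow (M^{\for})^\can = \limPRO{I \in \cP(k)} M^{\for}/IM^{\for}
$$
of the adjunction of Proposition~\ref{adjoint0} is an isomorphism. First I will observe that, since each $\ol{M}_I$ is killed by $I$, the projection $\pi_I : M \to \ol{M}_I$ annihilates $IM$ and hence factors through a surjection $q_I : M^{\for}/IM^{\for} \twoheadrightarrow \ol{M}_I$ (surjectivity of $\pi_I$ itself following from a Mittag-Leffler argument applied to the given projective system with surjective transitions over the directed set $\cP(k)$). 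Taking the limit of the $q_I$ produces a morphism $\beta : (M^{\for})^\can \to M$, and a direct unwinding of definitions should give $\beta \circ \alpha = \id_M$, so that $M$ is at least a retract of $(M^{\for})^\can$ in $\cL\cM_k$.

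The hard part will be upgrading this retract structure to a bilateral isomorphism by verifying $\alpha \circ \beta = \id$, equivalently that each $q_I$ is injective, i.e., $\ker(\pi_I) = IM$. Unravelling the limit presentation yields $\ker(\pi_I) = \limPRO{J \subset I} \ker(\pi_{J,I})$, and the real obstacle is to check that $\ker(\pi_{J,I}) = I\ol{M}_J$ and that compatible families $(y_J)_{J \subset I}$ with $y_J \in I\ol{M}_J$ admit coherent $I$-divisor decompositions, realizing them as finite sums $\sum_\ell a_\ell\, m^{(\ell)}$ with $a_\ell \in I$ and $m^{(\ell)} \in M$. My plan is to exploit the surjectivity of the transitions to produce level-wise lifts and then apply Mittag-Leffler once more along the cofinal set $\{J \subset I\}$ to globalize them into compatible choices in $M$. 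Once this is done, $\alpha$ and $\beta$ will be mutually inverse and $M \simeq (M^{\for})^\can$ will be manifestly canonical.
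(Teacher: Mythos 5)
Your ``only if'' half is fine (and is the tautological direction). In the ``if'' half there are two problems. The smaller one: surjectivity of the projections $\pi_I : M \to \ol{M}_I$ does \emph{not} follow from Mittag--Leffler here, since $\cP(k)$ is only a directed set and need not be countably cofinal; over general directed posets, surjective transition maps do not force the limit to surject onto the terms. (This is reparable: replace each $\ol{M}_I$ by $\pi_I(M)$, which changes neither the limit nor its topology and keeps the transitions surjective.) The fatal one is the choice $N = M^{\for}$ together with the goal $\Ker(\pi_I) = IM$. Since $\ol{M}_I$ is discrete, $\Ker(\pi_I)$ is open, hence closed, while $IM$ need not be closed; the best one can hope for is $\Ker(\pi_I) = \ol{IM}$. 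What you are in fact trying to prove is that $M$ carries the naive canonical topology of $M^{\for}$ and is complete for it, which is strictly stronger than $M$ being canonical: the paper's Remark~\ref{naive0} points out that even $\what{N} = N^{\can}$ need not carry the naive canonical topology of its own underlying module, so $\alpha \circ \beta = \id$ cannot be established in general and your retract cannot be upgraded to an isomorphism along these lines.

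More seriously, the ``if'' direction as literally stated is false, so no argument can succeed without a strengthened hypothesis. Take $k = \Z_p$ and $M = \prod_{j \geq 1} \Z/p^j$ with the product topology (each factor discrete). Then $M = \limPRO{n} \ol{M}_n$ with $\ol{M}_n = \prod_{j \leq n} \Z/p^j$ a discrete $\Z/p^n$-module and all transitions surjective, so the stated hypotheses hold. But $M$ is not canonical: for any $N$, the submodules $\ol{I N^{\can}} = \Ker(N^{\can} \to N/IN)$ form a base of \emph{open} submodules of $N^{\can}$ (they are open by construction, and $IN^{\can}$ is dense in the kernel because $N^{\can} \to N/JN$ factors the quotient map $N \to N/JN$), whereas here $\ol{p^n M} = p^n M = \prod_j p^n \Z/p^j$ is closed but not open. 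The hypothesis that is implicitly needed is that the transitions identify $\ol{M}_J$ with $\ol{M}_I / J\ol{M}_I$ for $I \subset J$ --- which holds for the system $\{N/IN\}_I$ and for $\ol{M}_I = M/\ol{IM}$ as used in Remark~\ref{opencanon}, but is not implied by surjectivity alone. Any repair of your argument should add that hypothesis and work with $\ol{IM}$ rather than $IM$ throughout.
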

\begin{proof} Clear. 
\end{proof}
\begin{rmk} \label{opencanon} We observe that an open sub-object $N$ of a canonical module $M$ carries the canonical topology. In fact,
$N$ correspond to a projective system $\ol{N}_I$ of $k/I$-submodules of $\ol{M}_I = M/\ol{IM}$ such that the morphisms $\ol{N}_J \to \ol{N}_I$, for $J \subset I$ in $\cP(k)$, are 
surjective for sufficiently small $I$. So $\{\ol{N}_I\}_{I \in \cP(k)}$ satisfies the condition in Definition~\ref{canon}.
\end{rmk} 
 So, if $M$ is canonical,  a basis of open $k$-submodules 
of $M$ consists 
of the closures $\ol{I\,M} = \pi_I^{-1}(\ol{M}_I)$ in $M$ of the submodules $I \,M$, for $I$ in a basis of open ideals of $k$, where $\pi_I : M \to \ol{M}_I$ denotes the canonical 
projection. For any $M$ in $\cL\cM_k$, the \emph{canonical topology} of $M$ is the topology  for which $\{\ol{IM}\}_{I \in \cP(k)}$ is a fundamental system of open neighborhoods of $0$. So, $M$ is separated 
in its canonical topology if and only if $\bigcap_{I \in \cP(k)} \ol{IM} = (0)$. 
\par
An object of $\cL\cM_{k}$ is not necessarily separated in its canonical topology as one sees in the case of the object 
$\Q_p$ of $\cL\cM_{\Z_p}$. The $p$-adic topology of $\Q_p$ is not $\Z_p$-canonical, since for any open ideal $I = p^N \Z_p$ of $\Z_p$, $I \Q_p = \Q_p$. 
\par
For any $M$ in $\cL\cM_k$ there is a natural exact sequence 
\beq \label{canexact}
0 \longrightarrow  \bigcap_{I \in \cP(k)} \ol{IM}  \longrightarrow  M \map{\gamma_M} \limPRO{I \in \cP(k)} M/\ol{IM} \longrightarrow 0 \;.
\eeq
\begin{lemma} \label{factexact} Let $M$ be an object of  $\cL\cM_k$, and let \eqref{canexact} be the corresponding exact sequence. Then \hfill
\ben 
\item $M$ is uniform if and only if its topology is weaker than the canonical topology.
\item $M$ is uniform if and only if $\bigcap_{I \in \cP(k)} \ol{IM} = (0)$.
\item $\limPRO{I \in \cP(k)} M/\ol{IM}$ is the maximal uniform quotient of $M$. 
\een
\end{lemma}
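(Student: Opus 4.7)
The plan is to deduce Part 1 quickly from Lemma~\ref{factexact0}, and then use it together with the construction of a maximal uniform quotient to get Parts 2 and 3. For Part 1, the key observation is that every open $k$-submodule $U \in \cP(M)$ is automatically closed in $\tau_M$, since $M \setminus U = \bigcup_{m \notin U}(m+U)$ is a union of translates of $U$, hence open. Consequently, for any open ideal $I \in \cP(k)$, the inclusion $IM \subseteq U$ is equivalent to $\ol{IM} \subseteq U$. This means the condition ``$\tau_M$ is weaker than the canonical topology'' (basis $\{\ol{IM}\}_I$) coincides with ``$\tau_M$ is weaker than the naive canonical topology'' (basis $\{IM\}_I$), and Part 1 then follows at once from Lemma~\ref{factexact0}.

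For Part 2, the forward implication is immediate from Part 1 plus separatedness of $M$: each $U \in \cP(M)$ contains some $\ol{IM}$, and $\bigcap_{U \in \cP(M)} U = (0)$ yields $\bigcap_{I} \ol{IM} = (0)$. For the converse, I assume $\bigcap_I \ol{IM} = (0)$ and form $L := \limPRO{I \in \cP(k)} M/\ol{IM}$ in $\cL\cM_k$, endowing each $M/\ol{IM}$ with the discrete topology as a $k/I$-module. Each such quotient is a uniform discrete object, so by Lemma~\ref{limcolim0}(1) the limit $L$ lies in $\cL\cM^u_k$. The canonical map $\gamma_M : M \to L$ is injective by hypothesis, and I would argue that $\gamma_M$ realizes $M$ as a sub-object of $L$ in the sense of Definition~\ref{sub-obj}; stability of $\cL\cM^u_k$ under sub-objects in $\cL\cM_k$ (again by Lemma~\ref{limcolim0}(1), equalizers being limits) then transfers uniformity from $L$ to $M$.

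For Part 3, given any morphism $\phi : M \to N$ with $N \in \cL\cM^u_k$, $k$-linearity and continuity give $\phi(\ol{IM}) \subseteq \ol{\phi(IM)} \subseteq \ol{IN}$ for every $I \in \cP(k)$; applying Part 2 to $N$ yields $\bigcap_I \ol{IN} = (0)$, so $\phi$ annihilates $\bigcap_I \ol{IM}$ and factors uniquely through $M / \bigcap_I \ol{IM}$. Using the description of quotients in Lemma~\ref{kercoker}, together with the cofinality of $\{\ol{IM}\}_I$ among open $k$-submodules containing $\bigcap_I \ol{IM}$, one identifies this quotient with $L = \limPRO{I} M/\ol{IM}$; since $L$ is uniform, it is the maximal uniform quotient of $M$. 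The hard step throughout is the converse of Part 2: injectivity of $\gamma_M$ is immediate from the vanishing hypothesis, but one must genuinely verify that $\tau_M$ agrees with the subspace topology inherited from $L$ --- equivalently, that every $U \in \cP(M)$ contains some $\ol{IM}$ --- and this subtle topological compatibility between the neighborhood basis of $M$ and the filtering family $\{\ol{IM}\}_I$ is the real content of the backward direction.
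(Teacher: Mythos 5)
Your Part 1 is correct and follows the paper's own route (reduce to Lemma~\ref{factexact0}); the bridge you supply --- an open $k$-submodule $U$ is also closed, so $IM\subseteq U$ if and only if $\ol{IM}\subseteq U$, whence ``weaker than the naive canonical topology'' and ``weaker than the canonical topology'' are the same condition --- is exactly what is needed. The forward implication of Part 2 is likewise fine. The genuine gap is in the converse of Part 2, and you have located it yourself but not closed it: from $\bigcap_{I}\ol{IM}=(0)$ you only obtain a continuous injection $\gamma_M\colon M\to L:=\limPRO{I\in\cP(k)}M/\ol{IM}$ into a uniform object, and to invoke Definition~\ref{sub-obj} you would need the topology of $M$ to coincide with the subspace topology from $L$, i.e.\ that every $U\in\cP(M)$ contain some $\ol{IM}$ --- which is precisely the conclusion being sought. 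A continuous injection into a uniform object does not make the source uniform. Worse, this step cannot be repaired, because the implication fails: take $k=\Z_p$ with the $p$-adic topology and $M=\bigoplus_{n\ge 1}\Z/p^n\Z$ with the discrete topology. This is an object of $\cL\cM_k$ (each element is killed by some $p^N$, so the scalar product is continuous at every point; compare Remark~\ref{prodiscrete0}, which stresses that such discrete objects need not be uniform). Here $\ol{p^NM}=p^NM$ and $\bigcap_N p^NM=(0)$, yet $M$ is not uniform, since no $p^NM$ is contained in the open submodule $\{0\}$.

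Your Part 3 inherits the same defect: the asserted ``cofinality of $\{\ol{IM}\}_I$ among open $k$-submodules containing $\bigcap_I\ol{IM}$'' is exactly the false implication above (in the example, $\{0\}$ contains the intersection but no $\ol{IM}$), and the categorical quotient $M/\bigcap_I\ol{IM}$ of Lemma~\ref{kercoker} would then be $M$ itself, which is not uniform. A correct argument for Part 3 should not pass through Part 2 at all: given $\phi\colon M\to N$ with $N$ uniform, apply Part 1 to $N$ to see that every $V\in\cP(N)$ contains some $IN$, hence some $\ol{IN}$; combined with $\phi(\ol{IM})\subseteq\ol{\phi(IM)}\subseteq\ol{IN}$ this shows $\phi$ is continuous for the canonical topology of $M$, so it factors uniquely through the separated completion of $M$ in that topology, which is $L$. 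That identifies $L$ as the reflection of $M$ in $\cL\cM^u_k$ (the ``maximal uniform quotient'') independently of Part 2, and it is the argument you should make explicit.
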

  \begin{proof}  The first part of the statement follows from Lemma~\ref{factexact0}.  The are parts are clear. 
  \end{proof}
  \begin{rmk} \label{naive2} It follows from Remark~\ref{OP} that if $k$  is a domain and satisfies {\bf OP} then, on any open ideal $J$ of $k$, the subspace topology of $J$ in $k$, the canonical topology of $J$ and the naive canonical topology of $J$ all coincide. 
   \end{rmk}
  \begin{prop} \label{naive3} Assume the object $M$ of $\cL\cM_k$ is equipped with the canonical topology and $k$ satisfies {\bf OP}.  Assume moreover that $M$ satisfies 
 \par \medskip
{\bf (TFM)} \emph{For any $I \in \cP(k)$, $M/\ol{IM}$ is a torsion-free  $k/I$-module.}
\par
\medskip \noindent
   Then,  condition {\bf OPM} holds for the $k$-linearly topologized $k$-module $M$. 
 \end{prop}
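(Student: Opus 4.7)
The plan is to show that for every regular $a \in k$ and every open submodule $U \subseteq M$, the image $aU$ is open in $M$. Since $M$ carries the canonical topology, a fundamental system of open submodules of $M$ is given by $\{\ol{IM}\}_{I \in \cP(k)}$. The task therefore reduces to producing, for every $I \in \cP(k)$, some $I' \in \cP(k)$ with $\ol{I'M} \subseteq a\ol{IM}$.

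To locate $I'$, I would use {\bf OP} applied to $k$: since $a \in k$ is regular, the multiplication map $\mu_a \colon k \to k$ is open, so the ideal $aI$ is open in $k$ and contains some $I' \in \cP(k)$. This yields the trivial inclusions $I'M \subseteq aIM \subseteq a\ol{IM}$, and hence $\ol{I'M} \subseteq \ol{aIM}$. The remaining step is thus the equality $a\ol{IM} = \ol{aIM}$, i.e., that the submodule $a\ol{IM}$ is closed in $M$.

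To establish this equality, I would exploit the canonical description $M = \limPRO{J \in \cP(k)} M/\ol{JM}$ from Proposition~\ref{canon}. For $x \in \ol{aIM} = \bigcap_{J \in \cP(k)} (aIM + \ol{JM})$, choose for each $J \in \cP(k)$ a decomposition $x = a z_J + w_J$ with $z_J \in IM$ and $w_J \in \ol{JM}$. For $J' \subseteq J$, comparing the two decompositions yields $a\bigl(\pi_J(z_J) - \pi_J(z_{J'})\bigr) = 0$ in $M/\ol{JM}$. Invoking {\bf TFM}, which asserts that $M/\ol{JM}$ is torsion-free as $k/J$-module, the goal is to deduce $\pi_J(z_J) = \pi_J(z_{J'})$, producing a coherent system $z := (\pi_J(z_J))_J$ in $\limPRO{J}\pi_J(IM) = \ol{IM}$ satisfying $az = x$, whence $x \in a\ol{IM}$ as required.

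The main obstacle is that $a + J$ may fail to be a non-zero-divisor in $k/J$ even though $a$ is regular in $k$, as already illustrated by $k = \Z_p$, $a = p$; hence {\bf TFM} does not directly provide injectivity of multiplication by $a$ on $M/\ol{JM}$. To circumvent this I would use {\bf OP} once more: the ideal $(J:a) = \mu_a^{-1}(J)$ is open in $k$, which permits passing to a cofinal subfamily of $\cP(k)$ on which the $a$-annihilator $K_J = \{\bar m \in M/\ol{JM} : a\bar m = 0\}$ stabilizes in the projective system sense, yielding the Mittag-Leffler vanishing of the derived inverse limit $\varprojlim^{1}_{J} K_J$. Applying $\varprojlim$ to the short exact sequence of projective systems $0 \to K_J \to M/\ol{JM} \xrightarrow{\mu_a} a\cdot(M/\ol{JM}) \to 0$ then produces the required coherent lift $z \in \ol{IM}$ with $az = x$, completing the proof.
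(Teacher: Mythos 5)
Your set-up is sound and runs parallel to the paper's: reducing {\bf OPM} to the inclusion $\ol{I'M}\subseteq a\ol{IM}$ for some $I'\subseteq aI$ (using {\bf OP} to make $aI$ open in $k$), and then reducing that to $\ol{aIM}\subseteq a\ol{IM}$ by producing a coherent system $(\pi_J(z_J))_J$ in $\limPRO{J}\pi_J(IM)=\ol{IM}$, is exactly the right skeleton; the paper phrases the same step as ``if the net $(ax_\alpha)$ is Cauchy then $(x_\alpha)$ is Cauchy''. The genuine gap is your final paragraph. No argument is given for the Mittag--Leffler property of the annihilator system $K_J$: openness of $(J:a)=\mu_a^{-1}(J)$ follows from mere continuity of the product of $k$, concerns $k$ rather than $M/\ol{JM}$, and says nothing about the images $\Im(K_{J'}\to K_J)$; with no Noetherian or finiteness hypothesis anywhere, there is no reason for these images to stabilize. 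Worse, the workaround makes no use of {\bf TFM} at all, and a proof that never invokes {\bf TFM} cannot be right, since that hypothesis is there precisely to control the $a$-torsion of the quotients $M/\ol{JM}$. (A smaller defect: even granting Mittag--Leffler, the $\limpro$ exact sequence as written only produces a lift $z\in M$ with $az=x$, not $z\in\ol{IM}$; one would have to run it on the subsystems $\pi_J(IM)$.)

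The obstacle you correctly flag --- that $a+J$ may be a zero-divisor in $k/J$ --- is resolved in the paper not by homological machinery but by reading \emph{torsion-free} in the strong sense that every \emph{nonzero} element of $k/I$ acts injectively on $M/\ol{IM}$: the paper chooses $I\subsetneq ak$ precisely so that $a\notin I$, i.e.\ $\bar a\neq 0$ in $k/I$, and then passes from $a(x_\alpha-x_\beta)\in IN$ to $x_\alpha-x_\beta\in IN$ in one step. Under that reading your own deduction ``$a\bigl(\pi_J(z_J)-\pi_J(z_{J'})\bigr)=0$ hence $\pi_J(z_J)=\pi_J(z_{J'})$'' closes immediately for every $J$ with $a\notin J$ (a cofinal family, $k$ being separated), and the Mittag--Leffler detour is unnecessary. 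So either adopt the strong reading of {\bf TFM} and delete the last paragraph, or supply an actual proof of the Mittag--Leffler claim, which I do not believe exists at this level of generality. You have, incidentally, put your finger on a real ambiguity: with the standard weak notion of torsion-freeness (non-zero-divisors act injectively), the paper's own deduction at this point does not go through either.
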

 \begin{proof} We may assume that $M = \what{N}$ as in \eqref{canform}, where $N$ is a $k$-module equipped with the naive canonical topology. By condition {\bf OP}, $ak$ is open in $k$ and $aN$ is open in $N$. We have to show that $aM$ is open in $M$. Equivalently, 
 we show that  
 $aM \supset \ol{aN}$ in $M$. Let $a x_\alpha$, for $x_\alpha \in N$ for any $\alpha \in A$, be a net converging to $y \in M$. We need to show that $\{x_\alpha\}_{\alpha \in A}$ converges in 
 $M$. So, it suffices to show that if the net $\{ax_\alpha\}_{\alpha \in A}$ is Cauchy in $N$, the net 
 $\{x_\alpha\}_{\alpha \in A}$ is Cauchy in $N$, as well. This follows from condition {\bf TFM}. In fact, for any $I \in \cP(k)$, $M/\ol{IM} \cong N/IN$ is a torsion-free  $k/I$-module. Then, let $I \in \cP(k)$ be such that $I \subsetneq a k$, and let $\alpha_0 \in A$ be such that $a(x_\alpha-x_\beta) \in IN$ for all $\alpha,\beta \geq \alpha_0$. 
 By condition {\bf TFM},  $x_\alpha-x_\beta \in IN$ for all $\alpha,\beta \geq \alpha_0$.  
 \end{proof}

\par \smallskip

\begin{lemma} \label{opencan} Let $i_N:N \to M$ be an open embedding in $\cL\cM_k$
with $M$ canonical. Then $N$ is canonical. 
\end{lemma}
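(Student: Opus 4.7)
The plan is to combine the structure theorem for canonical objects (Proposition~\ref{canon}) with the sub-object characterization (Lemma~\ref{exactsub}). The statement is essentially the content of Remark~\ref{opencanon} turned into a formal lemma, so I will carry out in detail what is sketched there.

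First, since $M$ is canonical, apply Proposition~\ref{canon} to write
\[
M = \limPRO{I \in \cP(k)} \ol{M}_I, \qquad \ol{M}_I := M/\ol{IM},
\]
where each $\ol{M}_I$ is a discrete $k/I$-module, and the transition $\ol{M}_J \to \ol{M}_I$ is surjective whenever $I \subset J$ in $\cP(k)$. In particular, the family $\{\ol{IM}\}_{I \in \cP(k)}$ is a cofinal system in $\cP(M)$, so any filtered limit over $\cP(M)$ can be replaced by the filtered limit over $\cP(k)$.

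Next, by Definition~\ref{sub-obj}, $i_N$ identifies $N$ with a closed sub-object of $M$ endowed with the subspace topology. Applying Lemma~\ref{exactsub}(1), together with the cofinality just noted, we obtain
\[
N \iso \limPRO{I \in \cP(k)} \ol{N}_I,
\]
where $\ol{N}_I$ is the set-theoretic image of $N$ in $\ol{M}_I$. Since $\ol{N}_I$ is a $k/I$-submodule of the discrete $k/I$-module $\ol{M}_I$, it is itself a discrete $k/I$-module.

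Then, because $i_N$ is an \emph{open} embedding, Lemma~\ref{exactsub}(3) provides some $P_0 \in \cP(M)$ such that the transition $\ol{N}_P \to \ol{N}_Q$ is surjective for every $P \subset Q \subset P_0$ in $\cP(M)$. Choosing $I_0 \in \cP(k)$ with $\ol{I_0 M} \subset P_0$ and restricting to the cofinal subsystem of ideals $I \subset I_0$, we get that the transitions $\ol{N}_J \to \ol{N}_I$ of the projective system $\{\ol{N}_I\}_{I \subset I_0}$ are surjective. The converse direction of Proposition~\ref{canon} then yields that $N$ is canonical.

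There is no real obstacle here; the only point that needs minor care is the cofinality argument identifying the index set $\cP(k)$ with a cofinal family in $\cP(M)$ (and then further cofinally restricting to ideals $I \subset I_0$) so that both the structure theorem and the openness criterion can be applied to the same projective system representing $N$.
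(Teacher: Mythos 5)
Your proof is correct and follows essentially the same route as the paper, which likewise deduces the lemma from part~$\mathit 3$ of Lemma~\ref{exactsub} together with the characterization of canonical modules in Proposition~\ref{canon} (the argument is also the one sketched in Remark~\ref{opencanon}). The extra care you take with cofinality of $\{\ol{IM}\}_{I\in\cP(k)}$ in $\cP(M)$ and with restricting to ideals $I\subset I_0$ is exactly the detail the paper leaves implicit.
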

 \begin{proof} This follows from the description of open sub-objects of $M$ in $\mathit 3$ of Lemma~\ref{exactsub} and from the Definition~\ref{canform}  of canonical modules.
 \end{proof} 
 \begin{defn} \label{uncondconv}
Let $M$ be an object of $\cL\cM_k$. A formal series $\sum_{\alpha \in A} m_\alpha$ of elements $m_\alpha \in M$, indexed by any set $A$,   \emph{converges  unconditionally} to $m \in M$ 
if the net $F \mapsto \sigma_F := \sum_{\alpha \in F} m_\alpha$, for $F$ a finite subset of $A$, converges to $m$.  
We then say that $m$ is the \emph{sum of the $A$-series $\sum_{\alpha \in F} m_\alpha$} and   write $m = \sum_{\alpha \in A} m_\alpha$.  
\end{defn}
\end{section}
\begin{section}{Complete tensor products} \label{tensors} \label{tensorprod}
 The category $\cL\cM_k$ and its full subcategory $\cL\cM^u_k$ admit various natural notions of complete tensor products. We are interested in two  of them. 
 \begin{defn} \label{bildef0}
 For 3 objects $M,N,P$ of $\cL\cM_k$, we denote by  $\Bil^c_k(M \times N,P)$
(resp. $\Bil^u_k(M \times N,P)$) the $k$-module of $k$-bilinear functions $f: M \times N \to P$ which are  continuous for the product topology of $M \times N$ 
(resp. uniformly  continuous for the product uniformity of $M \times N$). We denote by   $M \wt^c_k N$ (resp. $M \wt^u_k N$) the object of $\cL\cM_k$, if it exists, 
    which represents the functor $\cL\cM_k \to \cM od_k$ 
 $P \mapsto \Bil^c_k(M \times N,P)$ (resp.  $P \mapsto \Bil^u_k(M \times N,P)$).
  \end{defn}
  \begin{prop} 

  Let $M = \limIND{\alpha} M_\alpha$ and $N = \limIND{\beta} N_\beta$ be objects of $\cL\cM_k$, for filtered inductive systems $\{M_\alpha\}_\alpha$ and $\{N_\beta\}_\beta$ in 
$\cL\cM_k$. 
Then, for any $X$ in $\cL\cM_k$, there is a canonical isomorphism in $\cM od_k$
 \beq \begin{split}
\label{bilinearcont} 
\Bil_k^c(M \times N, X)& \iso  \limPRO{\alpha,\beta} \Bil_k^c(M_\alpha \times N_\beta,X) \\
\varphi  & \longmapsto (\varphi \circ j_{\alpha,\beta})_{\alpha,\beta}
\end{split}
\eeq
 where  $j_{\alpha,\beta}: M_\alpha \times N_\beta \to M \times N$ is the canonical morphism.  

 \end{prop}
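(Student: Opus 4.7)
The map $\Phi: \varphi \mapsto (\varphi \circ j_{\alpha,\beta})_{\alpha,\beta}$ is manifestly $k$-linear and well-defined, since each $j_{\alpha,\beta} = j_\alpha \times j_\beta: M_\alpha \times N_\beta \to M \times N$ is a continuous $k$-linear morphism for the product topologies, and composing with a continuous map preserves continuity and bilinearity; compatibility of the image tuple under the transition morphisms is automatic from $j_{\alpha',\beta'} \circ (j_{\alpha,\alpha'} \times j_{\beta,\beta'}) = j_{\alpha,\beta}$. For injectivity, I would invoke the construction of the inductive limits in Lemma~\ref{limcolim0}.2: the union $M' := \bigcup_\alpha j_\alpha(M_\alpha)$ is dense in $M$ and similarly $N' := \bigcup_\beta j_\beta(N_\beta)$ is dense in $N$, so $M' \times N' = \bigcup_{\alpha,\beta}\bigl(j_\alpha(M_\alpha) \times j_\beta(N_\beta)\bigr)$ is dense in $M \times N$ for the product topology. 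Since $X$ is separated, any $\varphi \in \Bil^c_k(M \times N, X)$ is determined by its restrictions to the $j_\alpha(M_\alpha) \times j_\beta(N_\beta)$, which is exactly the data $\Phi(\varphi)$.

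For surjectivity, given a compatible family $(\varphi_{\alpha,\beta})$, the universal property of colimits in $\cM od_k$ (applied twice) produces a $k$-bilinear map $\varphi'_0 : M' \times N' \to X$ uniquely characterized by $\varphi'_0(j_\alpha(m_\alpha), j_\beta(n_\beta)) = \varphi_{\alpha,\beta}(m_\alpha, n_\beta)$, with well-definedness and bilinearity verified by passing all relevant computations to a common pair of indices $\alpha \vee \alpha'$, $\beta \vee \beta'$ and applying the compatibility of the system. The remaining task is twofold: prove that $\varphi'_0$ is continuous for the product topology of $M' \times N'$, and then extend it to $M \times N$.

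Continuity of $\varphi'_0$ at $(0,0)$ splits into two sub-steps. Separate continuity is essentially formal: for fixed $n = j_\beta(n_\beta) \in N'$, the linear map $\varphi'_0(-, n) : M' \to X$ has the property that $\varphi'_0(-, n) \circ j_\alpha$ equals the continuous linear map $m_\alpha \mapsto \varphi_{\alpha \vee \beta, \beta}(j_{\alpha, \alpha\vee\beta}(m_\alpha), n_\beta)$; so by the description of the topology on $M'$ (Lemma~\ref{limcolim0}.2 — a linear map out of $M'$ is continuous iff each composition with $j_\alpha$ is), $\varphi'_0(-, n)$ is continuous. Symmetrically in the other variable. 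For joint continuity at $(0,0)$ I would then observe that, given any $W \in \cP(X)$, the open submodule $U := \bigcup_\alpha j_\alpha(U_\alpha) \subset M'$ and $V := \bigcup_\beta j_\beta(V_\beta) \subset N'$ obtained from compatible choices of open submodules $U_\alpha \subset M_\alpha, V_\beta \subset N_\beta$ with $\varphi_{\alpha,\beta}(U_\alpha \times V_\beta) \subset W$ satisfy $\varphi'_0(U \times V) \subset W$ by bilinearity.

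The hard part is the existence of such a coherent choice $\{U_\alpha\}_\alpha$, $\{V_\beta\}_\beta$: joint continuity of each $\varphi_{\alpha,\beta}$ at $(0,0)$ produces $U_{\alpha,\beta}^W \in \cP(M_\alpha)$ and $V_{\alpha,\beta}^W \in \cP(N_\beta)$ with $\varphi_{\alpha,\beta}(U_{\alpha,\beta}^W \times V_{\alpha,\beta}^W) \subset W$, but these depend \emph{jointly} on $(\alpha,\beta)$, while we need $U_\alpha$ depending only on $\alpha$ and $V_\beta$ depending only on $\beta$. This decoupling is the true obstacle and is handled by exploiting compatibility with transition maps to pass to cofinal subsystems and using the density of $M',N'$ to show that the naive choices $U_\alpha := j_\alpha^{-1}(U)$ and $V_\beta := j_\beta^{-1}(V)$ for $U,V$ open in $M,N$ do work. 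Once $\varphi'_0$ is continuous on $M' \times N'$, completeness of $X$ and bilinearity let one extend by the standard argument: for $(m,n) \in M \times N$ pick nets $m_\alpha \to m$, $n_\beta \to n$ in $M',N'$, verify that $\varphi'_0(m_\alpha,n_\beta)$ is Cauchy by the identity $\varphi'_0(m_\alpha, n_\beta) - \varphi'_0(m_{\alpha'}, n_{\beta'}) = \varphi'_0(m_\alpha - m_{\alpha'}, n_\beta) + \varphi'_0(m_{\alpha'}, n_\beta - n_{\beta'})$ and continuity at $(0,n)$ and $(m,0)$, and define $\varphi(m,n)$ as the limit. Bilinearity, continuity, and uniqueness of $\varphi$ are then routine, and $\Phi(\varphi) = (\varphi_{\alpha,\beta})$ by construction.
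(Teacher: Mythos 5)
Your overall architecture is the same as the paper's: injectivity follows from density of $\bigcup_{\alpha,\beta} j_\alpha(M_\alpha)\times j_\beta(N_\beta)$ in $M\times N$ together with separatedness of $X$, and surjectivity is obtained by assembling the $\varphi_{\alpha,\beta}$ into a bilinear map, proving continuity at the origin, and extending to the completion. (The paper packages the extension a bit differently, taking for arbitrary $(m,n)$ a net $(j_{\alpha,\beta}(m_\alpha,n_\beta))$ converging to $(m,n)$ and reducing convergence of the image net to the case $(m,n)=(0,0)$, but that is only a cosmetic difference from your two-step "define on $M'\times N'$, then extend".)

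The problem is that you have correctly isolated the crux and then not proved it. What is needed is: for a given $W\in\cP(X)$, open submodules $U\in\cP(M)$ and $V\in\cP(N)$ such that $\varphi_{\alpha,\beta}\bigl(j_\alpha^{-1}(U)\times j_\beta^{-1}(V)\bigr)\subset W$ for \emph{all} $(\alpha,\beta)$, whereas joint continuity of each $\varphi_{\alpha,\beta}$ only yields neighborhoods $U^W_{\alpha,\beta}$, $V^W_{\alpha,\beta}$ depending on the pair. Your sentence claiming this "is handled by exploiting compatibility with transition maps ... to show that the naive choices $U_\alpha:=j_\alpha^{-1}(U)$ and $V_\beta:=j_\beta^{-1}(V)$ for $U,V$ open in $M,N$ do work" is not an argument: the existence of open $U\subset M$, $V\subset N$ with $\varphi(U\times V)\subset W$ \emph{is} the continuity at $(0,0)$ you are trying to establish, so invoking "the naive choices $j_\alpha^{-1}(U)$" presupposes the conclusion. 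For comparison, the paper dispatches the same step with the assertion that "by definition of inductive limits in $\cL\cM_k$" there exist $V\in\cP(M)$, $W\in\cP(N)$ with $j_{\alpha,\beta}^{-1}(V\times W)\subset V_\alpha\times W_\beta$ for all $(\alpha,\beta)$ --- an assertion that itself deserves scrutiny, since the definition of the inductive-limit topology gives containments in the opposite direction (for $V$ open in $M$, $j_\alpha^{-1}(V)$ is open, not contained in a prescribed $V_\alpha$). So your write-up reproduces the paper's proof except at its one genuinely nontrivial point, which you flag but leave as a hole; as it stands it is not a complete proof.
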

 \begin{proof} The $k$-linear map of \eqref{bilinearcont} clearly exists and is injective. We have to show surjectivity. So, let $(\varphi_{\alpha,\beta})_{\alpha,\beta}$ be a compatible 
 system in $\limPRO{\alpha,\beta} \Bil_k^c(M_\alpha \times N_\beta,X)$. For any $(m,n) \in  M \times N = \limIND{\alpha.\beta} M_\alpha \times N_\beta$, let $(j_{\alpha,\beta}(m_\alpha,n_\beta))_{\alpha,\beta}$ 
 be a net indexed by $A \times B$ converging to $(m,n)$,  with $(m_\alpha,n_\beta) \in M_\alpha \times N_\beta$. Then the net  $(\varphi_{\alpha,\beta}(m_\alpha,n_\beta))_{\alpha,\beta}$ is a Cauchy net 
 in $X$. It suffices to check that for $(m,n) = (0,0)$ the previous net converges to $0 \in X$. Now, for any $U \in \cP(X)$ and any $(\alpha,\beta)$, there are $V_\alpha \in \cP(M_\alpha)$ and 
 $W_\beta \in \cP(N_\beta)$ such that $\varphi_{\alpha,\beta}(V_\alpha \times W_\beta) \subset U$. 
 By definition of inductive limits in $\cL\cM_k$, there exists $V \in \cP(M)$ (resp. $W \in \cP(N)$) such that, for any $(\alpha,\beta) \in A \times B$, $j_{\alpha,\beta}^{-1}(V \times W) \subset V_\alpha \times W_\beta$. 

 Since  $(j_{\alpha,\beta}(m_\alpha,n_\beta))_{\alpha,\beta}$  converges to $(0,0) \in M \times N$,  we may then assume that  $(m_\alpha,n_\beta) \in V_\alpha \times W_\beta$ for any $\alpha,\beta$. 
 It then follows that  $\varphi_{\alpha,\beta}(m_\alpha \times n_\beta) \in U$, for any $\alpha,\beta$. 

 Coming back to the case of any $(m,n) \in M \times N$, we define  $\varphi (m,n)$ as the limit of the net $(\varphi_{\alpha,\beta}(m_\alpha,n_\beta))_{\alpha,\beta}$. 
 The definition is good since it is  for $(m,n) = (0,0)$. It is clear that 
 $$
 \varphi \circ j_{\alpha,\beta} =  \varphi_{\alpha,\beta} \;\;\; \forall \;\;(\alpha,\beta) \in A \times B \;.
 $$

 \end{proof}
 
 \begin{lemma} \label{tenslemma} Let $M$ and $N$ be objects of $\cL\cM_k$. 
 \ben
 \item  
 $M \wt^c_k N$ exists in $\cL\cM_k$. More precisely, 
 $M \wt^c_k N$ is the completion of $M  \otimes_k N$ in the $k$-linear topology 
  with a fundamental set of open $k$-submodules given by the images $\Im(P  \otimes_k Q)$ in $M  \otimes_k N$, for $P$ (resp. $Q$) varying in the set of open submodules of $M$ (resp. $N$). So, 
 \beq \label{tenscdef}
M \wt^c_k N =  \limpro_{P,Q} \, \; (M  \otimes_k N)/  \Im (P  \otimes_k Q) 
\eeq
for $P$ (resp. $Q$) as before, where all the terms of the projective systems carry the discrete topology.  
\item  $M \wt^u_k N$ exists in $\cL\cM_k$. More precisely 
$M \wt^u_k N$ is the completion of $M  \otimes_k N$ in the $k$-linear topology 
 with a fundamental set of open $k$-submodules given by the images $\Im(P  \otimes_k N + M  \otimes_k Q)$ in $M  \otimes_k N$,
 for $P$ (resp. $Q$) varying in the set of open submodules of $M$ (resp. $N$).
 We have 
\beq \label{tensudef}
M \wt^u_k N =   \limpro_{P,Q} \, \; (M  \otimes_k N) /  \Im (P  \otimes_k N + M  \otimes_k Q) = \limpro_{P,Q} \, \; M/P \otimes_k N/Q  \;,
\eeq
for $P$ (resp. $Q$) as before, where all the terms of the projective systems carry the discrete topology and are uniform.  
A fundamental system  of open submodules of  $M \wt^u_k N$  consists of the closures in $M \wt^u_k N$   of  the $k$-submodules 
$\Im(P  \otimes_k N + M  \otimes_k Q) \subset M \otimes_kN$, for  $P, Q$ as before. 
 \een
\end{lemma}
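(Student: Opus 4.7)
The strategy for both parts is to construct the candidate as the separated completion of the algebraic tensor product $M \otimes_k N$ under an explicit $k$-linear topology, and then verify the universal property directly. For part~(1), I would first show that $\{\Im(P \otimes_k Q)\}_{P \in \cP(M), Q \in \cP(N)}$ is a filter base of $k$-submodules of $M \otimes_k N$: each is plainly a $k$-submodule, and the intersection stability follows from the inclusion $\Im((P \cap P') \otimes_k (Q \cap Q')) \subset \Im(P \otimes_k Q) \cap \Im(P' \otimes_k Q')$. This defines a $k$-linear topology $\tau^c$ on $M \otimes_k N$; one checks that $\tau^c$ makes it a topological $k$-module by reducing, via the decomposition of an element as $\sum_i m_i \otimes n_i$, to the continuity of the scalar products on $M$ and $N$ separately. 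Setting $M \wt^c_k N$ to be the separated completion then yields an object of $\cL\cM_k$, and formula (\ref{tenscdef}) is the standard expression of the completion as the projective limit of discrete quotients by the fundamental system of open submodules.

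For the universal property in part~(1), given $X \in \cL\cM_k$ and a continuous bilinear $f \in \Bil^c_k(M \times N, X)$, continuity of $f$ at $(0, 0)$ supplies, for each $U \in \cP(X)$, open $P$ and $Q$ with $f(P \times Q) \subset U$; since $U$ is a $k$-submodule, it contains the $k$-submodule generated by $f(P \times Q)$, which is exactly the image $\tilde f_0(\Im(P \otimes_k Q))$ of the linearization $\tilde f_0: M \otimes_k N \to X$. Thus $\tilde f_0$ is continuous at $0$ for $\tau^c$, hence continuous everywhere (being $k$-linear), and by completeness of $X$ extends uniquely to $\tilde f: M \wt^c_k N \to X$. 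The inverse map $g \mapsto g \circ \beta$, where $\beta$ denotes the canonical bilinear map $M \times N \to M \wt^c_k N$, provides the other direction, and density of the algebraic tensor product in its completion shows these constructions are mutually inverse.

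Part~(2) follows the same pattern with the coarser topology $\tau^u$ whose fundamental system is $\{\Im(P \otimes_k N + M \otimes_k Q)\}$; the key identification is that $\Im(P \otimes_k N + M \otimes_k Q) = \ker\bigl(M \otimes_k N \twoheadrightarrow M/P \otimes_k N/Q\bigr)$, which yields both equalities in (\ref{tensudef}). The new feature here is that the universal bilinear map is \emph{uniformly} continuous for the product uniformity: the entourage defined by $P \times Q$ on $M \times N$ controls, uniformly in the base point, the entourage defined by $\Im(P \otimes_k N + M \otimes_k Q)$ via the identity $(m + P) \otimes (n + Q) - m \otimes n = m \otimes Q + P \otimes n + P \otimes Q \subset M \otimes_k Q + P \otimes_k N$. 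Each quotient $M/P \otimes_k N/Q$ is discrete, and when $M, N$ are uniform, Lemma~\ref{factexact0} lets us find $I \in \cP(k)$ with $IM \subset P$ and $IN \subset Q$, making the quotient a $k/I$-module, hence uniform, which gives the last assertion of (2). The main obstacle in both parts is verifying that $\tau^c$ (respectively $\tau^u$) is a genuine $k$-linear topology — particularly continuity of scalar multiplication — and for $\tau^c$ this check is more delicate because its fundamental system is strictly finer than that of $\tau^u$ and less compatible with one-sided scaling by $M$ or $N$.
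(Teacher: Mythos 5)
Your part (2) is essentially the paper's argument: the identification of $\Im(P\otimes_k N + M\otimes_k Q)$ with the kernel of $M\otimes_k N \to M/P\otimes_k N/Q$ (the paper quotes Bourbaki for this) and the uniform estimate $(m+P)\otimes(n+Q)-m\otimes n\in \Im(P\otimes_k N+M\otimes_k Q)$, valid independently of the base point, are exactly what is needed there. The gap is in part (1), in the direction you dispose of in one clause: ``the inverse map $g\mapsto g\circ\beta$ provides the other direction.'' You must show that for a continuous $k$-linear $g:M\wt^c_k N\to X$ the bilinear map $g\circ\beta$ is continuous for the product topology \emph{at every point}, not merely at $(0,0)$. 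For bilinear maps these are genuinely different: continuity at the origin only gives the existence of $P,Q$ with $\beta(P\times Q)\subset R$, and this does not control the cross terms $\beta(\{m\}\times Q)$ and $\beta(P\times\{n\})$, since $m\otimes Q\not\subset\Im(P\otimes_k Q)$ when $m\notin P$ (an open submodule of $M$ need not absorb a given $m$; there is no field here to rescale with). The paper's proof devotes its entire second half to precisely this point: it first argues that the partial maps $y\mapsto\beta(m,y)$ and $x\mapsto\beta(x,n)$ are continuous at $0$, hence uniformly continuous, and only then concludes via $\beta((m+P_n)\times(n+Q_m))\subset\beta(m,n)+\beta(\{m\}\times Q_m)+\beta(P_n\times\{n\})+\beta(P_n\times Q_m)\subset\beta(m,n)+R$. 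Without some version of this step your two assignments are not shown to be mutually inverse bijections between $\Bil^c_k(M\times N,X)$ and $\hom_{\cL\cM_k}(M\wt^c_k N,X)$, which is the whole content of representability.

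A smaller but related soft spot: you yourself flag the verification that $\tau^c$ is a $k$-module topology as ``the main obstacle,'' and then do not carry it out. The reduction you sketch does not work as stated: for $a$ in a small ideal, $a\cdot(m_i\otimes n_i)=(am_i)\otimes n_i$ lies in $\Im(P\otimes_k\langle n_i\rangle)$, not in $\Im(P\otimes_k Q)$; to land in $\Im(P\otimes_k Q)$ one must split the scalar across the two tensor factors (e.g. through a product of two open ideals), which is a different and more delicate argument than invoking continuity of the scalar products on $M$ and $N$ separately.
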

\begin{proof} 
 Formula \eqref{tenscdef} is proven in essentially the same way as  \cite[Lemma 17.1]{schneider}.  Namely, we let $\cP(M)$ (resp. $\cP(N)$) be a basis of open $k$-submodules of 
 $M$ (resp. $N$). Then, for any object $T$ of $\cL\cM_k$  with a basis of open $k$-submodules $\cP(T)$, a continuous $k$-bilinear map  $\beta: M \times N \to T$ is in particular continuous 
 at $(0,0) \in M \times N$.  
Let  $\beta' : M \otimes_k N \to T$ be the $k$-linear  map corresponding to $\beta$. 
For any 
 $R \in \cP(T)$ there are $P \in \cP(M)$ and $Q \in \cP(N)$ such that $\beta (P \times Q) \subset R$ and therefore $\beta' (\Im(P \otimes_k Q)) \subset R$. We conclude that 
 $\beta'$ extends to a continuous $k$-linear map 
  $\gamma:= \what{\beta'} : M \wt^c_k N \to T$ such that 
  \beq \label{bilcont}
  \gamma(m \wt^c_k n) = \beta(m,n) \;.
  \eeq 
 Conversely, given the morphism $\gamma  : M \wt^c_k N \to T$ in $\cL\cM_k$,   the $k$-bilinear map  $\beta: M \times N \to T$ defined by \eqref{bilcont}
is such that  for any 
 $R \in \cP(T)$ there are $P \in \cP(M)$ and $Q \in \cP(N)$ such that $\beta (P \times Q) \subset R$. 
Moreover, for any $m \in M$ (resp. $n \in N$) the $k$-linear map $N \to T$ given by $y \mapsto \beta (m,y)$ (resp. $M \to T$ given by  $x \mapsto \beta (x,n)$) is continuous at $0$, hence is uniformly continuous. 
So,  
 for any fixed $(m,n) \in M \times N$ we can find $P_n \in \cP(M)$ and  $Q_m \in \cP(N)$ such that 
 $$\beta(m \times Q_m) , \beta (P_n,n)  , \beta (P_n,Q_m)  \subset   R \;.
 $$
 Then
 $$\beta ((m +P_n) \times (n + Q_m)) \subset  \beta (m,n) + \beta (\{m\} \times Q_m) + \beta (P_n \times \{n\}) + \beta (P_n \times Q_m) \subset   \beta (m,n)  +R \;.
  $$
  This proves that $\beta$ is continuous for the product topology of $M \times N$. 
  \par \smallskip
 We now pass to  \eqref{tensudef}~:
we prove the first equality in
that formula. Let $\beta: M \times N \to P$ be $k$-bilinear and uniformly continuous. Then, for any open submodule $W$ of $P$,
 we can find an  open submodule $U$ (resp $V$)   of $M$ (resp $N$) such that, for any $(x,y) \in M \times N$
 $$
 \beta(x+U,y+V) = \beta(x,y) + \beta(x,V) + \beta (U,y) + \beta (U \times V)  \subset \beta(x,y) + W \;.
 $$
This means that we must have $\beta(U \times N + M \times V) \subset W$. Conversely, if $\beta: M \times N \to P$ is $k$-bilinear and satisfies the latter condition 
the same calculation read backwards shows that $\beta$ is uniformly continuous.
The second equality in  \eqref{tensudef} follows from 
the canonical isomorphism 
$$
 M/P \otimes_k N/Q \iso (M  \otimes_k N) /  (P  \otimes_k N + M  \otimes_k Q)
$$
proven in 
\cite[II, \S 3, n.  6, Cor. 1 of Prop. 6, p. 60]{algebra}. 
\par
The fact that a fundamental system  of open submodules of   $M \wt^u_k N$ consists of the closures of  
$P  \otimes_k N + M  \otimes_k Q$, for  $P, Q$ in a fundamental system  of open submodules of $M,N$, respectively, is a general fact about completions. 
\end{proof}

\begin{rmk}
For any $M$ and $N$ in $\cL\cM_k$, we have a canonical  
 morphism in $\cL\cM_k$
$$
\Phi: M \wt^c_kN \longrightarrow M \wt^u_kN  
$$
such that, for any $X$ in $\cL\cM_k$, $\varphi \mapsto \varphi \circ \Phi$ is the natural inclusion
$$
\Bil_k^u(M \times N, X) \longrightarrow \Bil_k^c(M \times N, X)  \;.
$$ 
\end{rmk}

\begin{prop} \label{indmod} \hfill
\ben
\item
Let $M = \limIND{\alpha} M_\alpha$ and $N = \limIND{\beta} N_\beta$ be objects of $\cL\cM_k$, for inductive systems $\{M_\alpha\}_\alpha$ and $\{N_\beta\}_\beta$ in 
$\cL\cM_k$. 
Then
\beq
\label{billim0}
\limIND{\alpha} M_\alpha \, \wt^c_k \,  \limIND{\beta} N_\beta =  {\limIND{\alpha,\beta}} \, M_\alpha \wt^c_k N_\beta   \;.
\eeq
\item
Let $M = \limPRO{\alpha} M_\alpha$ and $N = \limPRO{\beta} N_\beta$ be objects of $\cL\cM_k$, for projective systems $\{M_\alpha\}_\alpha$ and $\{N_\beta\}_\beta$ in 
$\cL\cM_k$. Then
\beq
\label{billim1}
 \limPRO{\alpha} M_\alpha \, \wt^u_k \,  \limPRO{\beta} N_\beta   =   {\limPRO{\alpha,\beta}} \,  M_\alpha \wt^u_k N_\beta   \;.
\eeq
\item
If $M$ and $N$ are objects of $\cL\cM_k^u$,  then both $M \wt^c_kN$ and $M \wt^u_kN$ are uniform. 
\een 
\end{prop}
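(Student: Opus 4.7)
The plan is to treat the three parts using distinct techniques: part 1 is a direct Yoneda argument, part 2 uses the explicit formula \eqref{tensudef} combined with a reduction to effective projective systems, and part 3 reduces each tensor product to a projective limit of manifestly uniform quotients. The main obstacle is in part 2, where the interaction between projective limits and open submodules is delicate; part 3 for $\wt^c_k$ also has a subtlety involving the openness of ideal products.

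For the first part, I would appeal to the universal property of $\wt^c_k$ together with the preceding proposition on continuous bilinear maps out of inductive limits. For any object $X$ of $\cL\cM_k$, formula \eqref{bilinearcont} gives $\Bil^c_k((\limIND{\alpha} M_\alpha) \times (\limIND{\beta} N_\beta), X) = \limpro_{\alpha,\beta} \Bil^c_k(M_\alpha \times N_\beta, X)$. Translating each side via the universal property of $\wt^c_k$ and that of inductive limits in $\cL\cM_k$ (Lemma~\ref{limcolim0}), one obtains a natural isomorphism
\[
\hom_{\cL\cM_k}\bigl((\limIND{\alpha} M_\alpha) \wt^c_k (\limIND{\beta} N_\beta), X\bigr) \cong \hom_{\cL\cM_k}\bigl(\limIND{\alpha,\beta}(M_\alpha \wt^c_k N_\beta), X\bigr);
\]
Yoneda then produces \eqref{billim0}.

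For the second part, I would start from the explicit description \eqref{tensudef}, $M \wt^u_k N = \limpro_{P,Q} M/P \otimes_k N/Q$, and substitute $M = \limPRO{\alpha} M_\alpha$, $N = \limPRO{\beta} N_\beta$. After passing to a filtered refinement of the index, a cofinal family of open submodules of $M$ is $\{\pi_\alpha^{-1}(P_\alpha)\}$ with $P_\alpha \in \cP(M_\alpha)$, and analogously for $N$, so \eqref{tensudef} becomes an iterated projective limit over $(\alpha, P_\alpha, \beta, Q_\beta)$. The core obstacle is that the natural map $M/\pi_\alpha^{-1}(P_\alpha) \to M_\alpha/P_\alpha$ is generally only an injection, so the individual factors need not literally match those of $\limpro_{\alpha,\beta} M_\alpha \wt^u_k N_\beta$. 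I would bypass this by reducing to effective projective systems: replace each $M_\alpha$ by the closure $\overline{\pi_\alpha(M)}$ in $M_\alpha$, which preserves the projective limit and renders the projections of dense image, so the injection into the discrete module $M_\alpha/P_\alpha$ becomes an isomorphism. A rearrangement of the iterated limit then delivers \eqref{billim1}.

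For the third part, the strategy is the same for both tensor products: exhibit each factor of the defining projective limit as uniform, then invoke closure of $\cL\cM^u_k$ under limits (Lemma~\ref{limcolim0}). For $\wt^u_k$, given $P \in \cP(M)$ and $Q \in \cP(N)$, uniformity of $M$ and $N$ supplies open ideals $I, J \in \cP(k)$ with $IM \subset P$ and $JN \subset Q$; then $I + J$ is open and annihilates $M/P \otimes_k N/Q$, so by Remark~\ref{prodiscrete0} this discrete $k/(I+J)$-module is uniform. For $\wt^c_k$, the parallel computation with \eqref{tenscdef} shows that the product ideal $IJ$ annihilates $(M \otimes_k N)/\Im(P \otimes_k Q)$; the remaining step is to produce an open ideal inside $IJ$, which is exactly property \textbf{OPW}, presumably implicit on $k$ here (it holds in particular when $k$ is adic). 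Granted that, each quotient is a uniform $k/IJ$-module and the projective limit $M \wt^c_k N$ is uniform.
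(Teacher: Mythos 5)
Your part 1 is exactly the paper's argument: identify $\hom_{\cL\cM_k}(M\wt^c_kN,X)$ with $\Bil^c_k(M\times N,X)$, apply \eqref{bilinearcont}, and read the right-hand side as $\hom_{\cL\cM_k}(\limIND{\alpha,\beta}M_\alpha\wt^c_kN_\beta,X)$; Yoneda finishes it, and there is nothing to add. Part 2 also follows the paper's route (substitute $M_\alpha=\limPRO{P_\alpha}M_\alpha/P_\alpha$ into \eqref{tensudef} and rearrange the iterated limit), but you have correctly put your finger on the step the paper passes over in silence: the open submodules of $M=\limPRO{\alpha}M_\alpha$ are the $\pi_\alpha^{-1}(P_\alpha)$, and $M/\pi_\alpha^{-1}(P_\alpha)\to M_\alpha/P_\alpha$ is only injective, so \eqref{tensudef} literally produces the limit of the tensor products of the \emph{images}, not of the $M_\alpha/P_\alpha\otimes_kN_\beta/Q_\beta$ themselves. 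Your reduction to the effective system $M'_\alpha:=\ol{\pi_\alpha(M)}$ does make the projections onto the discrete quotients surjective and hence identifies $M\wt^u_kN$ with $\limPRO{\alpha,\beta}M'_\alpha\wt^u_kN'_\beta$; but \eqref{billim1} asserts equality with $\limPRO{\alpha,\beta}M_\alpha\wt^u_kN_\beta$, and since $\wt^u_k$ (and $\otimes_k$ on the discrete quotients) need not preserve the closed, non-open embeddings $M'_\alpha\hookrightarrow M_\alpha$, the comparison of these two limits is not automatic and is still missing from your argument. To be fair, the paper's own two-line computation has the same lacuna.

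For part 3 the paper offers only ``immediate''. Your argument for $\wt^u_k$ is complete and correct: $M/P\otimes_kN/Q$ is killed by the open ideal $I+J$, hence is a discrete uniform module by Remark~\ref{prodiscrete0}, and $\cL\cM^u_k$ is closed under limits by Lemma~\ref{limcolim0}. For $\wt^c_k$ you are right that the computation only yields that $IJ$ annihilates $(M\otimes_kN)/\Im(P\otimes_kQ)$, so that one needs $IJ$ to contain an open ideal of $k$, which is condition {\bf OPW} and is not among the hypotheses of the proposition. This is a defect of the statement rather than of your proof: already for $M=N=k$ (which is always uniform over itself) one has $k\wt^c_kk=\limpro_{I,J}k/\ol{IJ}$, and since $1\in k\wt^c_kk$, uniformity of this object forces each $\ol{IJ}$ to contain an open ideal of $k$, which is essentially {\bf OPW}; this can fail for a complete linearly topologized $k$. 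So {\bf OPW} should be stated as an explicit hypothesis for the $\wt^c_k$ half of part 3 rather than left ``presumably implicit'' --- note that the paper itself invokes {\bf OPW} for the closely related comparison $M\wt^c_kN=M\wt^u_kN$ in Lemma~\ref{basechange11}.
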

\begin{proof}   It suffices to prove that the functor $\Bil_k^c(M \times N, -)$ of \eqref{bildef0}
 is represented by  ${\limIND{\alpha,\beta}} \, M_\alpha \wt^c_k N_\beta$. So, we pick an object $X$ of $\cL\cM_k$ and consider 
  \eqref{bilinearcont}.  
The l.h.s.  of that equation in fact equals  $\hom_{\cL\cM_k}(M \wt_k^c N,X)$ while its r.h.s. equals 
$$ \limPRO{\alpha,\beta}  \hom_{\cL\cM_k}(M_\alpha \wt_k^c N_\beta,X) = \hom_{\cL\cM_k}(\limIND{\alpha,\beta} M_\alpha \wt_k^c N_\beta,X) \;.
  $$
  So, equation \eqref{billim0} follows. \par
To prove  equation \eqref{billim1} we write 
$$
M_\alpha = \limPRO{P_\alpha \in \cP(M_\alpha)} M_\alpha/P_\alpha   \;\;,\;\; N_\beta = \limPRO{Q_\beta \in \cP(N_\beta)} N_\beta/Q_\beta  \;.
$$
Then
\beq
\label{billim11}
\begin{split}
 \limPRO{\alpha} &\, M_\alpha \, \wt^u_k \,  \limPRO{\beta} \, N_\beta   =   ( \limPRO{\alpha}  \limPRO{P_\alpha \in \cP(M_\alpha)} M_\alpha/P_\alpha )\, \wt^u_k \,  (\limPRO{\beta}  \limPRO{Q_\beta \in \cP(N_\beta)} 
 N_\beta/Q_\beta) = \\ & \limPRO{\alpha}  \limPRO{P_\alpha \in \cP(M_\alpha)}  \limPRO{\beta}  \limPRO{Q_\beta \in \cP(N_\beta)} (M_\alpha/P_\alpha\, \otimes_k \, 
 N_\beta/Q_\beta)
 = {\limPRO{\alpha,\beta}} \,  M_\alpha \wt^u_k N_\beta   \;.
 \end{split}
\eeq
Point $\mathit 3$ is immediate. 
\par
  \end{proof}
 \end{section}
 \begin{section}{Pseudoconvexity} \label{Pseudoconvexity}  
   \begin{defn} \label{pseudocan} 
 An object $M$ of $\cL\cM_{k}$ is \emph{pseudobanach} or a  \emph{pseudobanach $k$-module}   if 
 there exists a family $\cG$ of open sub-objects of $M$   satisfying the following conditions
 \ben 
 \item Any $P \in \cG$ is equipped with the canonical topology.
 \item For any $P \in \cG$ and any open ideal $I$ of $k$, let $\ol{IP}$ be the closure of $IP$ in $P$. Then, the discrete $k/I$-module $P/\ol{IP}$ is flat. 
 \item
$M$ is the union of its open submodules $P$, for $P \in \cG$. 
 \een
   We call such a $\cG$ a $k$-\emph{gauge} (or simply a \emph{gauge} if there is no risk of confusion) of $M$.   
   The full subcategory of $\cL\cM_k$ consisting of pseudobanach objects will be denoted by $\cP\cB_k$.
   \end{defn}
   \begin{rmk} \label{basisopen} For any $M$ in $\cP\cB_k$ and any gauge $\cG$ for $M$, we have 
   $$
   M =\limpro_{P,I} M/\ol{IP}
   $$
   where $P \in \cG$ and $I$ describes the open ideals of $k$. Notice that the closure $\ol{IP}$ of $IP$ in $P$ is open in $M$. 
   So, while it is not required that $\cG$ should be a fundamental system of open $k$-submodules of $M$, 
   this is certainly the case for $\{ \ol{IP} \}_{P,I}$ for $P$ and $I$ as before. 
      \end{rmk}
      \begin{rmk} \label{commens} Notation as in Remark~\ref{basisopen}.
      For any $P,Q \in \cG$ we have that $P \cap Q$ is an open $k$-submodule of both $P$ and $Q$. It follows that there are open ideals $I,J$ of $k$ such that 
      $I P \subset Q$ and $J Q \subset P$. So, for any $P \in \cG$, $\{ \ol{IP} \}_I$, for $I$ an open ideal of $k$, is  a fundamental system of open $k$-submodules of $M$.  
       \end{rmk}
     \begin{rmk} \label{psudocank} Assume $k$ satisfies condition {\bf OP}.  Then follows from Remark~\ref{naive2}   that any open ideal $J$ of $k$  is a 
      pseudobanach $k$-module with gauge the set   $\cP(J)$ 
      of open ideals of $k$  contained in $J$.  
      \end{rmk}
   \begin{rmk} \label{psudodiscr} 
If $k$ has the discrete topology,  a pseudobanach $k$-module $M$ is simply a flat $k$-module equipped with the discrete topology. A gauge for such an  $M$ is $\cG = \{M\}$. So in this case $\cP\cB_k$ is the full subcategory of $\cL\cM_k$ consisting of flat $k$-modules equipped with the discrete topology and $-\wt_k^c - =   - \otimes_k -$.
 \end{rmk}
 \begin{rmk} \label{naive31} It follows from condition $\mathit 2$ of  Definition~\ref{pseudocan} that, for any pseudobanach $k$-module $M$, condition  {\bf (TFM)} of 
 Proposition~\ref{naive3} is satisfied for any $P$ in a gauge  $\cG$ for $M$. 
We then conclude from that proposition that,  if $k$ satisfies condition {\bf OP}, then, for any pseudobanach $k$-module $M$
and  for any $a \in k - \{0\}$, the
  map 
  $$M \longrightarrow M \;\;,\;\; x \longmapsto a x
 $$
 is open. 
 \end{rmk}
 \begin{rmk}
\label{TVS}
Let $K$ be a non-archimedean field and let $k = K^\circ$; then  $K$   is a pseudobanach object of $\cL\cR_{K^\circ}$ for the gauge $\{aK^\circ\}_{a \in K^\times}$. 
Let  $M$ be an object of $\cL\cM_k$ such that
for any $a \in k - \{0\}$, the map $m \mapsto am$ is a bijection. Then
     the scalar product $k \times M \to M$ extends uniquely to a structure of  $K$-vector space $K \times M \to M$. By Remark~\ref{naive31} 
     this is in fact a structure of topological $K$-vector space. 
  In this situation we will simply say that 
     the  object  $M$ of $\cL\cM_k$ \emph{is a topological $K$-vector space}. 

\end{rmk}
    \begin{defn}  \label{pseudobanach} 
    Let $K$ be a non-archimedean field and let $k = K^\circ$. A \emph{pseudobanach space over $K$} is any  object $M$ of $\cP\cB_k$ which is a $K$-vector space.
         We  view  the category $\cB an_K$ of pseudobanach spaces over $K$ as a full subcategory of $\cL\cM_{k}$.     
 \end{defn} 
 \begin{cor}
For a non-trivially valued $K$ the category $\cB an_K$  is  equivalent to the category 
 of $K$-Banach spaces and continuous maps of \cite{schneider}. For $K$ trivially valued  the category $\cB an_K$  is  equivalent to $\cM od_K$. 
 \end{cor}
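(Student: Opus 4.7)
When the valuation on $K$ is trivial, $K^\circ = K$ carries the discrete topology. Remark~\ref{psudodiscr} then identifies $\cP\cB_{K^\circ}$ with the category of flat $K$-modules equipped with the discrete topology; since $K$ is a field this is all of $\cM od_K$, and the additional bijectivity requirement in Definition~\ref{pseudobanach} is automatic on any $K$-vector space. Hence $\cB an_K \simeq \cM od_K$, via the identity (discrete topology on both sides).

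\textbf{Non-trivially valued case.} I would construct a pair of quasi-inverse functors. Given $(M,\cG) \in \cB an_K$, Remark~\ref{TVS} already makes $M$ into a topological $K$-vector space. Fix any $P \in \cG$; by Remarks~\ref{basisopen}--\ref{commens}, $M = \bigcup_{a \in K^\times} a^{-1}P = KP$ and $\{aP\}_{a \in K^{\circ\circ}\setminus\{0\}}$ is a fundamental system of open $0$-neighborhoods in $M$. Define
\[
\|m\|_P := \inf\{|a|_K : a \in K^\times,\; m \in aP\}.
\]
The ultrametric inequality follows from $P+P=P$, homogeneity from $aP$ being a $K^\circ$-module, and strict positivity from separatedness of $M$ together with $\bigcap_a aP = (0)$. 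The norm topology coincides with the module topology, and completeness of $(M, \|\cdot\|_P)$ is inherited from completeness in $\cL\cM_{K^\circ}$. A different choice $P'\in \cG$ gives an equivalent norm by commensurability (Remark~\ref{commens}), so the Banach space is well-defined; morphisms on both sides are the continuous $K^\circ$-linear (equivalently, $K$-linear) maps.

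For the quasi-inverse, given a $K$-Banach space $(V,\|\cdot\|)$, view $V$ as an object of $\cL\cM_{K^\circ}$ with the $K^\circ$-linear topology whose basis of open submodules is the closed balls $\{v : \|v\| \leq c\}$ for $c \in |K^\times|$; this is complete and separated. As gauge $\cG$ I take the family of all open, closed-bounded $K^\circ$-sub-objects $P$ satisfying conditions~(1) and~(2) of Definition~\ref{pseudocan}. When $V \simeq c_0(I,K)$ is a free Banach space on an index set $I$, the closed unit ball $P = c_0(I,K^\circ)$ lies in $\cG$ because $P/\ol{aP} \simeq \bigoplus_I K^\circ/aK^\circ$ is a free, hence flat, $K^\circ/aK^\circ$-module. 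For arbitrary $V$ one covers $V$ by open sub-objects of this form.

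\textbf{Main obstacle.} Condition~(2) of Definition~\ref{pseudocan} is the delicate point. When the value group $|K^\times|$ is dense, a single closed ball of $V$ need not lie in $\cG$, and one cannot work with just the ``unit ball'' as in the classical Schneider setup. The technical heart of the argument is to exhibit a cofinal family of ``free'' open sub-objects (arising from free sub-Banach spaces) and to verify that their union is all of $V$. Once gauges are produced on both sides, the verification that the functors are mutually quasi-inverse reduces to unwinding definitions: the norm produced by a pseudobanach structure, and the pseudobanach structure produced by a norm, recover each other up to the natural notions of identification (equivalence of norms and commensurability of gauges).
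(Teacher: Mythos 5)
The paper states this corollary without proof, so there is nothing to compare against line by line; I evaluate your argument on its own terms. Your trivially valued case is correct, and your passage from a pseudobanach space to a Banach space is also correct and is exactly the gauge/Minkowski construction the paper's title alludes to: for a fixed $P\in\cG$ one has $M=\bigcup_{a\in K^\times}a^{-1}P$ and $\{aP\}_a$ cofinal among the $\ol{IP}$ (Remarks~\ref{basisopen}, \ref{commens}, \ref{naive31}), so $\|m\|_P=\inf\{|a|: m\in aP\}$ is a norm defining the topology, complete because the topology is unchanged, and independent of $P$ up to equivalence by commensurability.

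The genuine gap is in the quasi-inverse, and it is not where you locate it. Your ``main obstacle'' is a red herring: for \emph{any} $K$-Banach space $V$ the closed unit ball $P$ already satisfies condition~$\mathit 2$ of Definition~\ref{pseudocan}, with no need to decompose $V$ into free pieces. Indeed $P$ is a torsion-free module over the valuation ring $K^\circ$, hence flat; for any open ideal $I$ of $K^\circ$ one has $IP=\bigcup_{a\in I}aP$ (factor out a coefficient of maximal absolute value), which is again a ball $\{v:\|v\|\le s\}$ or $\{v:\|v\|<s\}$, hence clopen, so $\ol{IP}=IP$ and $P/\ol{IP}=P\otimes_{K^\circ}K^\circ/I$ is flat over $K^\circ/I$ by base change. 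The same description of the $IP$ shows $P=\limpro_I P/IP$ with surjective transition maps, i.e.\ condition~$\mathit 1$, and $\cG=\{aP\}_{a\in K^\times}$ gives condition~$\mathit 3$. By contrast, the route you sketch cannot be completed as stated: an open $K^\circ$-submodule of $V$ necessarily spans $V$ over $K$, so it cannot be the unit ball of a proper closed free sub-Banach space; and over a densely valued, non--spherically complete $K$ not every Banach space is isomorphic to a $c_0(I,K)$, so there is no evident supply of ``free'' open sub-objects to form a cofinal family. Replace that paragraph with the direct flatness argument above and the proof closes.
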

\end{section}

\begin{section}{Topological rings}
\begin{defn} \label{topring0}
A   \emph{(topological) $k$-ring $A$} is a  $k$-linearly topologized topological $k$-module $A$ such that the product map
$$
\mu_A:  A \times A \longrightarrow A \;\;,\;\; (x,y) \longmapsto xy
$$
makes $A$ into a $k$-algebra (commutative with 1) and 
is continuous  \emph{for the product topology of $A \times A$}.  
We let  $\cL\cR_{k}$ be the category of complete topological $k$-rings and continuous $k$-algebra homomorphisms. We denote by $\cL\cR^u_{k}$ 
the full subcategory of $\cL\cR_{k}$ consisting of the objects  $A$ such that the scalar product 
$$
(\mu_A)_{| k \times A}:    k \times A \longrightarrow A \;\;,\;\; (\lambda,y) \longmapsto \lambda y
$$
is uniformly continuous. 
We define $\cR\cR_k$ as the full subcategory of $\cL\cR_k$ (and of $\cL\cR^u_k$) consisting of complete linearly topologized $k$-rings. 
\end{defn}
 If $A$ is an object 
of $\cR\cR_{k}$ then the product map is in fact uniformly continuous for the product uniformity of $A \times A$ but is not necessarily open. 
\begin{lemma} \label{colimrings} The categories  $\cL\cR_k$,  $\cL\cR^u_k$  and $\cR\cR_k$ admit both limits and colimits. The $\cR\cR_k$-limit of a projective system of elements of $\cR\cR_k$ 
coincides with its $\cL\cR^u_k$-limit and with its  $\cL\cR_k$-limit.  The $\cL\cR^u_k$-limit  of a projective system of elements of $\cL\cR^u_k$ 
coincides with its  $\cL\cR_k$-limit.  
\end{lemma}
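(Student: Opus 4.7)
The plan is to lift the module-level constructions of Lemma~\ref{limcolim0} and of the complete tensor products in Lemma~\ref{tenslemma} to the ring setting, using the fact that the ring axioms and multiplicativity of morphisms are detected either after composition with the defining projections (for limits) or after restriction to the dense algebraic tensor product (for colimits). Concretely, given a projective system $(A_\alpha)_\alpha$ in $\cL\cR_k$ (resp.\ $\cL\cR^u_k$, resp.\ $\cR\cR_k$), I first form its module limit $A:=\limPRO{\alpha} A_\alpha$ in $\cL\cM_k$ (resp.\ $\cL\cM^u_k$) by Lemma~\ref{limcolim0}(1). The compatible system $\{\mu_{A_\alpha}\}$ gives, by the universal property of the module limit, a unique $k$-bilinear map $\mu_A:A\times A\to A$ whose composition with each projection $\pi_\alpha$ is $\mu_{A_\alpha}\circ(\pi_\alpha\times\pi_\alpha)$. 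Commutativity, associativity and the unit axiom pass from each $A_\alpha$ to $A$ since they are detected by the $\pi_\alpha$'s. Continuity of $\mu_A$ for the product topology reduces, via the weak topology of the $\pi_\alpha$'s, to the continuity of each $\mu_{A_\alpha}$; in the $\cR\cR_k$-case the uniformity is already the initial one with respect to $\pi_\alpha\times\pi_\alpha$, so the argument transfers to uniformities verbatim.

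\textbf{Colimits.} It suffices to produce arbitrary coproducts and coequalizers in each of the three categories, and then assemble general colimits in the standard way. For coequalizers of a pair $f,g: A\rightrightarrows B$, take $B/\overline{J}$ in the sense of Definition~\ref{quotient}, where $J$ is the ideal of $B$ generated by $\{f(a)-g(a):a\in A\}$: the quotient inherits a $k$-algebra structure, and both the uniformity of the scalar product and the linearity of the topology descend to any separated quotient. For the coproduct of $A$ and $B$, the candidate is $A\wt^c_k B$ in $\cL\cR_k$ and $A\wt^u_k B$ in $\cL\cR^u_k$ and in $\cR\cR_k$, endowed with the multiplication extending $(a_1\otimes b_1)(a_2\otimes b_2)=a_1a_2\otimes b_1b_2$ by continuity. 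The universal property follows by combining the universal property of $\wt^c_k$ (resp.\ $\wt^u_k$) from Definition~\ref{bildef0} with the fact that the multiplicativity of a continuous $k$-linear map out of $A\wt_k^c B$ (resp.\ $A\wt_k^u B$) is automatically forced on the whole completion once it holds on the dense subspace $A\otimes_k B$.

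\textbf{Coincidence of limits and main obstacle.} For the coincidence statements, the three candidate limits share the same underlying $k$-module $\limPRO{\alpha}A_\alpha^\for$ and the same weak topology of the projections, with no further completion or quotient inserted at any stage; the multiplication induced on $A$ is the same in all three cases, so the $\cR\cR_k$-limit of a system in $\cR\cR_k$ coincides with its $\cL\cR^u_k$- and its $\cL\cR_k$-limits, and similarly for $\cL\cR^u_k$-limits of systems in $\cL\cR^u_k$ viewed inside $\cL\cR_k$. The genuinely delicate point is checking that the algebraic multiplication on $A\otimes_k B$ really extends by continuity to a bilinear map on $A\wt_k^c B$ (resp.\ $A\wt_k^u B$) with the right continuity/uniformity properties, which forces one to work with the explicit fundamental systems of open submodules of the tensor topologies described in Lemma~\ref{tenslemma}; the divergence between $\wt^c_k$ and $\wt^u_k$ is precisely what obliges one to use $\wt^u_k$, rather than $\wt^c_k$, to realise coproducts in the uniform and linearly topologised settings.
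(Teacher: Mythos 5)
Your treatment of limits, and of the coincidence of the three limits, is sound and essentially the paper's: the paper transports the multiplications along the identity $R\wt^u_k R=\limPRO{\alpha}(R_\alpha\wt^u_k R_\alpha)$ of Proposition~\ref{indmod}, while you invoke the universal property of the module-level limit directly; both work, and your observation that the underlying module, topology and product agree in all three categories settles the coincidence claims. The colimit half, however, has a genuine error and a genuine gap.

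The error: the binary coproduct in $\cL\cR^u_k$ is \emph{not} $A\wt^u_k B$. A pair of morphisms $f:A\to C$, $g:B\to C$ in $\cL\cR^u_k$ induces the multiplicative bilinear map $(a,b)\mapsto f(a)g(b)=\mu_C(f(a),g(b))$, which is continuous but in general not uniformly continuous, because for an object $C$ of $\cL\cR^u_k$ only the scalar product $k\times C\to C$, not the ring product $C\times C\to C$, is required to be uniformly continuous. This map therefore lies in $\Ril^c_k(A\times B,C)$ and factors through $A\wt^c_k B$, but need not factor through $A\wt^u_k B$, so the universal property you need fails. Concretely, take $k=\F_p$ discrete (so that $\cL\cR^u_k=\cL\cR_k$) and $A=B=C=\F_p((t))$ with the $t$-adic topology and $f=g=\id$: then $A\wt^u_k B=\limPRO{n}\,(A/t^nA\otimes_k B/t^nB)=0$ since $t^nA=A$, and there is no unital morphism $0\to C$. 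The correct binary coproduct in $\cL\cR^u_k$ is $A\wt^c_k B$, which is uniform by part $\mathit 3$ of Proposition~\ref{indmod}; this is exactly the tensor product the paper uses when building colimits in $\cL\cR^u_k$. (Your choice of $\wt^u_k$ is right for $\cR\cR_k$, where the ring product is automatically uniformly continuous and $A\wt^c_k B$ need not be linearly topologized.)

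The gap: you announce ``arbitrary coproducts and coequalizers'' but only construct \emph{binary} coproducts. Assembling a general small colimit as a coequalizer between coproducts requires coproducts indexed by arbitrary sets, i.e.\ infinite completed tensor products, which you do not construct and the paper does not develop. The paper avoids both problems by building the colimit of an arbitrary inductive system $(R_\alpha)_\alpha$ directly: take the colimit $R$ of the underlying modules from Lemma~\ref{limcolim0} (re-topologized with the finest linear topology making the structure maps continuous, in the $\cR\cR_k$ case), and define the product via $R\wt^c_k R=\limIND{\alpha}(R_\alpha\wt^c_k R_\alpha)\to R$. If you wish to keep the coproduct/coequalizer decomposition, you must either supply infinite completed tensor products or reduce to filtered colimits of finite ones, and in any case replace $\wt^u_k$ by $\wt^c_k$ in the uniform category.
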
  
\begin{proof} 
 The case of limits in $\cR\cR_k$ follows from general nonsense. Namely, let  $(R_\alpha)_{\alpha \in A}$  be a  
 projective  system in $\cR\cR_k$. We then equip the projective limit 
 $R := \limPRO\alpha R_\alpha$ of $(R_\alpha)_{\alpha \in A}$ in $\cL\cM^u_k$ with a product map as follows. 
 For any $\alpha \in A$, the product map $\mu_{R_\alpha} : R_\alpha \times R_\alpha \to R_\alpha$ factors in this case 
 through a morphism $\mu_\alpha : R_\alpha \wt^u_k R_\alpha \to R_\alpha$. 
 We then have a projective system of morphisms in $\cL\cM^u_k$
 $$R \wt^u_k R =  (\limPRO\alpha R_\alpha) \wt^u_k  (\limPRO\alpha R_\alpha) = \limPRO\alpha (R_\alpha \wt^u_k R_\alpha) \map{\pi_\alpha \wt^u_k \pi_\alpha} R_\alpha \wt^u_k R_\alpha \map{\mu_\alpha }  R_\alpha  \longrightarrow R_\beta\;,
$$
for all $\alpha \geq \beta$, 
from which 
we obtain 
$$
\mu : R \wt^u_k R \longrightarrow \limPRO\beta R_\beta 
$$
and finally the product map
$$
\mu_R : R \times R \longrightarrow R \;.
$$
  The case of limits in $\cL\cR_k$ and $\cL\cR^u_k$  is similar to the one of $\cL\cM_k$ and $\cL\cM_k^u$ discussed in Lemma~\ref{limcolim} and will be omitted. 
  \par \medskip
  We now prove the existence of the colimit   in $\cL\cR_k$ of the inductive system  $(R_\alpha)_{\alpha \in A}$   in $\cL\cR_k$.  
For any $\alpha \in A$, the product map $\mu_{R_\alpha} : R_\alpha \times R_\alpha \to R_\alpha$ factors through a morphism $\mu_\alpha : R_\alpha \wt^c_k R_\alpha \to R_\alpha$.
We then equip the colimit  $R:= \limIND\alpha R_\alpha$  of the system $(R_\alpha)_{\alpha \in A}$ in $\cL\cM_k$ with the product map
obtained as follows.  From the system of morphisms 
$$
\mu_\alpha : R_\alpha \wt^c_k R_\alpha \longrightarrow  R_\alpha  \map{j_\alpha}  R
$$
and
$$
 R \wt^c_k R = (\limIND\alpha R_\alpha) \wt^c_k   (\limIND\alpha R_\alpha) \longrightarrow R_\alpha \wt^c_k R_\alpha
$$
we obtain 
$$
\mu:= \limIND\alpha \mu_\alpha : R \wt^c_k R = (\limIND\alpha R_\alpha) \wt^c_k   (\limIND\alpha R_\alpha) \longrightarrow  \limIND\alpha (R_\alpha \wt^c_k R_\alpha)  \longrightarrow R
$$
and finally the product map
$$
\mu_R : R \times R \longrightarrow R \;.
$$
Similarly, if the inductive system  $(R_\alpha)_{\alpha \in A}$  consists of objects of $\cL\cR^u_k$ (resp. $\cR\cR_k$), we
equip the colimit  $R^u:= {\limIND\alpha}^u R_\alpha$  of the system $(R_\alpha)_{\alpha \in A}$ in $\cL\cM^u_k$ with the product map
obtained as follows.  From the system of morphisms 
$$\mu_\alpha : R_\alpha \wt^c_k R_\alpha \longrightarrow  R_\alpha  \map{j_\alpha}  R \;,
$$
where $R_\alpha \wt^c_k R_\alpha$ is uniform by comma $\mathit 3$ of Proposition~\ref{indmod}, 
and
$$
 R^u \wt^c_k R^u = ({\limIND\alpha}^u R_\alpha) \wt^c_k   ({\limIND\alpha}^u R_\alpha) \longrightarrow R_\alpha \wt^c_k R_\alpha
$$
we obtain 
$$
\mu_{R^u} := {\limIND\alpha}^u \mu_\alpha : R^u \wt^c_k R^u = ({\limIND\alpha}^u R_\alpha) \wt^c_k   ({\limIND\alpha}^u R_\alpha)  \longrightarrow  {\limIND\alpha}^u (R_\alpha \wt^c_k R_\alpha)   \longrightarrow R^u
$$
and finally the continuous product map
$$
\mu_{R^u} : R^u \times R^u \longrightarrow R^u \;.
$$

\end{proof} 
\begin{rmk}
Although not logically necessary, we prefer to 
give an explicit description of  the product map of $R$.
As in Lemma~\ref{limcolim},
 we first consider $R' = \limind_{\alpha \in A} R_\alpha^\for$ in $\cR ings$ and let $j_\alpha: R^\for_\alpha \to R'$ 
be the natural morphisms.   
We then give to $R'$ the finest $k$-linear  topology  such that all maps $j_\alpha: R_\alpha \to R'$ are continuous. So, a basis of open $k$-submodules in $R'$ consists of 
the $k$-submodules $U$ of $R'$  such that
$j_\alpha^{-1}(U)$ is an open $k$-submodule $J_\alpha$ of $R_\alpha$, for any $\alpha \in A$. 
Then $\limIND{\alpha \in A} R_\alpha$ is represented by the completion $R$ of $R'$ in that 
topology, equipped with the natural morphisms $i_\alpha : R_\alpha \to  R$ deduced from the $j_\alpha$'s. 
Let $r = (r_\alpha)_{\alpha \in A}, s = (s_\alpha)_{\alpha \in A} \in R$. Then, for any 
open $k$-submodule $U$ of $R$ as before there is an index $\alpha_0 \in A$  
such that  for any $\alpha \geq \alpha_0$, $r_{\alpha} - r_{\alpha_0}, s_{\alpha} - s_{\alpha_0}  \in J_\alpha$.
So, 
$$r_{\alpha} s_{\alpha} - r_{\alpha_0}s_{\alpha_0} = r_{\alpha}(s_{\alpha} - s_{\alpha_0}) + (r_{\alpha} - r_{\alpha_0})s_{\alpha_0} \in J_\alpha \;.
$$
This shows that  $(r_\alpha s_\alpha)_\alpha  \in R$ so that we get a product map
\beqa
\begin{array}{ccccccc}
& R = \limIND{\alpha \in A} R_\alpha &  \times  & R = \limIND{\alpha \in A} R_\alpha & \longrightarrow & R =  \limIND{\alpha \in A} R_\alpha& \\
 &&&&&&
 \\
  &   (r=(r_\alpha)_{\alpha \in A} & ,  &s= (s_\alpha)_\alpha  )& \longmapsto& rs =(r_\alpha s_\alpha)_\alpha
\end{array}
\eeqa
continuous for the product topology of $R \times R$. It is clear that $R$ is in fact the colimit of the inductive system  $(R_\alpha)_{\alpha \in A}$   in $\cL\cR_k$.  
  \par
Assume now   $(R_\alpha)_{\alpha \in A}$ 
is an  inductive system  in $\cL\cR^u_k$. The previous construction gives the inductive limit  of $(R_\alpha)_{\alpha \in A}$ in $\cL\cR_k$. 
To construct explicitly 
the inductive limit  of $(R_\alpha)_{\alpha \in A}$ in $\cL\cR^u_k$ we repeat the construction of $R'$ but endow it with the finest $k$-linear topology \emph{weaker than the canonical 
topology}
such that all $j_\alpha$ are continuous. Then $R^u$ is the completion of $R'$ in that topology, and  the existence of a product map
$$
\mu_{R^u} : R^u \times R^u \longrightarrow R^u 
$$  
follows. \par
Finally, let $(R_\alpha)_{\alpha \in A}$ 
is an  inductive system  in $\cR\cR_k$.
To explicitly  construct 
the inductive limit  of $(R_\alpha)_{\alpha \in A}$ in $\cR\cR_k$ we repeat the construction of $R'$ but endow it with the finest \emph{linear} topology 
such that all $j_\alpha$ are continuous. A basis of open ideals of $R'$ then consists of the ideals $U$ such that $j_\alpha^{-1}(U) = J_\alpha$ is an open ideal of $R_\alpha$, for any $\alpha \in A$. 
 We prove as before that the completion $R$ of $R'$ in the latter topology is an object of $\cR\cR_k$ and that it represents the inductive limit  of $(R_\alpha)_{\alpha \in A}$ in $\cR\cR_k$.
\end{rmk}
\begin{notation} \label{indlimdef}  If $\{R_\alpha\}_\alpha$ is an inductive system in 
 $\cR\cR_k$ (resp. $\cL\cR^u_k$, resp. $\cL\cR_k$), we denote by $\limind^\cR_\alpha R_\alpha$ (resp. $\limind^u_\alpha R_\alpha$, resp. $\limind^\cL_\alpha R_\alpha$) its inductive limit in  $\cR\cR_k$
(resp. $\cL\cR^u_k$, resp. $\cL\cR_k$). 
\end{notation}
We introduce in the case of rings a notation analog to \eqref{cohnhd}.
\begin{notation} \label{cohnhdrings}  
Let $(R_\alpha)_{\alpha \in A}$ be 
an inductive system  in $\cL\cR_k$ with transition morphisms $j_{\alpha,\beta}:R_\alpha \to R_\beta$ for $\alpha \leq \beta$. 
For any $\alpha \in A$ let $\cP(R_\alpha)$ denote the set of open ideals of $R_\alpha$. Then 
\emph{a coherent system  of open ideals of $(R_\alpha)_{\alpha \in A}$} is a system $\sJ := (J_\alpha)_{\alpha \in A}$ such that for any $\alpha \leq \beta$ in $A$, 
$j_{\alpha,\beta}^{-1}(J_\beta) = J_\alpha$. The set $\sC((R_\alpha)_{\alpha \in A})$ of coherent systems of open ideals of $(R_\alpha)_{\alpha \in A}$ forms a filter
of $k$-submodules of $\prod_{\alpha \in A} R_\alpha^\for$. 
\end{notation}
\begin{lemma} \label{expldirlimrings}   We use the notation of \eqref{cohnhdrings}.
\hfill \ben
\item Let $(R_\alpha)_{\alpha \in A})$ be 
an inductive system  in $\cL\cR_k$. Then
\beq  \label{expldirlimrings1}
{\limIND\alpha}^\cL R_\alpha = \limPRO{\sJ \in \sC((R_\alpha)_{\alpha \in A})} {\limIND\alpha} R_\alpha/J_\alpha 
\eeq
where the inductive limit 
$$
{\limIND\alpha} R_\alpha/J_\alpha
$$
is taken in the category $\cM od_k$. 
\item Let 
$(R_\alpha)_{\alpha \in A}$ is an inductive system  in $\cL\cR^u_k$. Then 
\beq  \label{expldirlimrings2}
{\limIND\alpha}^u R_\alpha = \limPRO{I \in \cP(k)} {\limIND{\alpha \in A}}^u R_\alpha/ \ol{I R_\alpha}  
\eeq
where the inductive limit 
$$
 {\limIND{\alpha \in A}}^u R_\alpha/ \ol{I R_\alpha}
$$
is taken in the category $\cL\cR^u_{k/I}$. It coincides as a topological $k/I$-module with  the colimit of the same inductive system taken in the category 
$\cL\cM^u_{k/I}$ and also in the category $\cL\cM_{k/I} = \cL\cM^u_{k/I}$
since $k/I$ is discrete. 
\item Let 
$(R_\alpha)_{\alpha \in A}$ is an inductive system  in $\cR\cR_k$. Then 
\beq  \label{expldirlimrings3}
{\limIND\alpha}^\cR R_\alpha = \limPRO{I \in \cP(k)} \limIND{\alpha \in A}^\cR R_\alpha/ \ol{I R_\alpha} \;, 
\eeq
where the inductive limit 
$$
\limIND{\alpha \in A}^\cR R_\alpha/ \ol{I R_\alpha} 
$$
is taken in the category $\cR\cR_{k/I}$.
\een
 \end{lemma}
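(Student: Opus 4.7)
The plan is to derive each of the three formulas from the corresponding explicit construction of colimits in Lemma~\ref{colimrings}, combined with the module-theoretic descriptions already proven in Lemma~\ref{expldirlim}. In each case the structure of the argument mirrors the module-case proof, with coherent systems of open ideals playing the role that coherent systems of open $k$-submodules played there.

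For part~1, I would begin from the characterisation of ${\limIND\alpha}^\cL R_\alpha$ in Lemma~\ref{colimrings} as the completion of the algebraic colimit $R' = \limind_\alpha R_\alpha^\for$ in the finest $k$-linear ring topology for which every $j_\alpha$ is continuous. A cofinal family of open neighborhoods of $0$ in $R'$ is then obtained by pulling back coherent systems $\sJ = (J_\alpha)_\alpha \in \sC((R_\alpha)_{\alpha \in A})$ of open ideals, because continuity of the product forces the relevant open $k$-submodules to be stable under the multiplicative action. Expressing the completion in the standard way as a projective limit of the discrete quotient rings $\limIND\alpha R_\alpha/J_\alpha$ (taken in $\cM od_k$, since each such quotient is already discrete) then yields~\eqref{expldirlimrings1}.

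For parts~2 and 3 I would proceed along parallel lines, exploiting an additional simplification. In $\cL\cR^u_k$ the colimit topology is required to be weaker than the $k$-canonical topology (Lemma~\ref{factexact}), so the closures $\{\overline{IR_\alpha}\}_{I \in \cP(k)}$ already provide a cofinal subfamily of coherent systems of open ideals; reindexing the projective limit by $I \in \cP(k)$ first yields~\eqref{expldirlimrings2}. The coincidence of the inner colimit $\limIND\alpha R_\alpha/\overline{IR_\alpha}$ computed in $\cL\cM_{k/I}$, $\cL\cM^u_{k/I}$, $\cL\cR^u_{k/I}$ and $\cR\cR_{k/I}$ is guaranteed by Remark~\ref{uniflimdiscr} and its evident ring-theoretic variant, since $k/I$ is discrete. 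Part~3 is handled identically, with the inner colimits taken throughout in $\cR\cR_{k/I}$, giving~\eqref{expldirlimrings3}.

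The main subtle point — likely the only non-formal ingredient in the argument — is the verification that passing from open $k$-submodules of the colimit (the data governing Lemma~\ref{expldirlim}) to coherent systems of open ideals of the $R_\alpha$ (the data indexing $\sC((R_\alpha)_{\alpha \in A})$) loses no topological information. This cofinality follows directly from the construction of the product map on the colimit in Lemma~\ref{colimrings}: the open neighborhoods entering that construction are already closed under multiplication by the colimit, hence are in fact ideals pulled back from the individual $R_\alpha$.
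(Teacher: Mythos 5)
The paper offers no argument here (its proof reads ``Clear''), so your strategy of unwinding the explicit construction of colimits in Lemma~\ref{colimrings} and regrouping the projective limit defining the completion is the natural one. Your treatment of parts~2 and~3 --- reindexing by the open ideal $I \in \cP(k)$ below which a basic open neighbourhood must lie, which is legitimate because the colimit topology in the uniform (resp.\ linear) setting is weaker than the naive canonical topology (Lemma~\ref{factexact0}, not Lemma~\ref{factexact}, is the relevant reference) --- is essentially correct, modulo the routine remark that replacing $I R_\alpha$ by its closure $\ol{I R_\alpha}$ does not change the completed quotients.

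The gap is in part~1, exactly at the point you yourself single out as ``the only non-formal ingredient''. Your claim that continuity of the product on ${\limIND{\alpha}}^{\cL} R_\alpha$ forces the basic open $k$-submodules of the colimit to be pulled back from open \emph{ideals} of the $R_\alpha$ is false: continuity of $\mu_R$ at $(0,0)$ only provides, for each basic open $U$, open $V,W$ with $VW \subset U$; it does not produce an open ideal inside $U$, and a general object of $\cL\cR_k$ need not possess any small open ideal at all. Concretely, take $k = K^\circ$ for a non-trivially valued non-archimedean field $K$ and the constant inductive system $R_\alpha = K$: the colimit in $\cL\cR_k$ is $K$, but the only open ideal of $K$ is $K$ itself, so $\sC((R_\alpha)_{\alpha})$ as defined in Notation~\ref{cohnhdrings} reduces to the trivial system and the right-hand side of \eqref{expldirlimrings1} collapses to $0$. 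Hence the cofinality you invoke cannot hold in the stated generality; \eqref{expldirlimrings1} is correct only if ``open ideals'' in Notation~\ref{cohnhdrings} is read as ``open $k$-submodules'' (in line with Notation~\ref{cohnhd} and \eqref{expldirlim1}), or if one restricts to inductive systems in $\cR\cR_k$ --- which is the only case in which the formula is later used (Lemma~\ref{basechIRR}). Your proof needs to make one of these restrictions explicit and delete the incorrect continuity argument; as written, the key step of part~1 fails.
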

\begin{proof} Clear.
\end{proof}


   By general nonsense, for any  inductive system $\{R_\alpha\}_\alpha$ in 
 $\cR\cR_k$,  there is a canonical morphism in $\cL\cR_k$ 
 \beq  \label{indlim0} 
 T :  {\limind_\alpha}^\cL R_\alpha \longrightarrow {\limind_\alpha}^\cR \, R_\alpha \;.
 \eeq
For any  object $R$ in  $\cR\cR_k$  the map 
\beq  \label{indlim11} \begin{split}
 \hom_{\cR\cR_k}({\limind_\alpha}^\cR \,R_\alpha,R)& \longrightarrow  \hom_{\cL\cR_k}({\limind_\alpha}^\cL \, R_\alpha,R)   \\
\varphi & \longmapsto  \varphi \circ   T 
\end{split}
\eeq
 is in fact an isomorphism, since both source and target equal $\limPRO\alpha  \hom_{\cR\cR_k}(R_\alpha,R)$. 
 \begin{lemma}  \label{indlim1} \hfill
 \ben
 \item
 There exists a $k$-linear functor 
 $$\cT : \cI\cR\cR_k \longrightarrow \cR\cR_k \;\;\;\mbox{such that} \;\;\;   \cT ( {\limind_\alpha}^\cL R_\alpha) = {\limind_\alpha}^\cR \, R_\alpha  \;,
 $$
for any  inductive system $\{R_\alpha\}_\alpha$ in 
 $\cR\cR_k$.    \item
 Let 
 \beq \label{natural} 
 \iota_{\cI\cR\cR_k} : \cI\cR\cR_k \hookrightarrow \cL\cR_k \;\;\mbox{and}\;\;  \iota_{\cR\cR_k} : \cR\cR_k \hookrightarrow \cL\cR_k
 \eeq 
 be the natural inclusions of full subcategories. 
  There exists a natural transformation of functors $\cI\cR\cR_k \to  \cL\cR_k$ 
 \beq \label{natutransf}
 \cS: \iota_{\cI\cR\cR_k} \longrightarrow   \iota_{\cR\cR_k} \circ \cT
 \eeq
 such that, for any object $\limind_\alpha^\cL R_\alpha$ of $\cI\cR\cR_k$
  \beq \label{natutransf1}
  \cS({\limind_\alpha}^\cL R_\alpha) : {\limind_\alpha}^\cL R_\alpha \longrightarrow {\limind_\alpha}^\cR \, R_\alpha
 \eeq
 coincides with the morphism $T$ of \eqref{indlim0}, or,  equivalently,  is the image of  
 $\id_R$, for $R := \limIND\alpha R_\alpha$, via the identification of \eqref{indlim11}.
 \een
 \end{lemma}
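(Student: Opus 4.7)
The plan is to read \eqref{indlim11} as saying that for every $L = {\limind_\alpha}^\cL R_\alpha$ in $\cI\cR\cR_k$, the functor
$\hom_{\cL\cR_k}(L,\iota_{\cR\cR_k}(-)) : \cR\cR_k \to \cA b$ is representable by $\limind_\alpha^\cR R_\alpha$, with universal element the morphism $T$ of \eqref{indlim0}. This is precisely the unit datum for a left adjoint to the inclusion $\cR\cR_k \hookrightarrow \cI\cR\cR_k$, so every assertion in the lemma will reduce by Yoneda to manipulations with this universal property.

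For the construction, fix, for each $L \in \cI\cR\cR_k$, an object $\cT(L) \in \cR\cR_k$ together with a morphism $T_L : L \to \cT(L)$ in $\cL\cR_k$ that represents the above functor; when $L$ carries a specified presentation $L = \limind_\alpha^\cL R_\alpha$, we take $\cT(L) := \limind_\alpha^\cR R_\alpha$ and $T_L := T$ as in \eqref{indlim0}. Any two such choices are canonically isomorphic by Yoneda, so the isomorphism class of $\cT(L)$ depends only on $L$. For a morphism $f : L_1 \to L_2$ in $\cI\cR\cR_k$, the element $T_{L_2} \circ f \in \hom_{\cL\cR_k}(L_1,\cT(L_2))$ corresponds under \eqref{indlim11} to a unique morphism $\cT(f) : \cT(L_1) \to \cT(L_2)$ in $\cR\cR_k$ satisfying
\[
\cT(f) \circ T_{L_1} = T_{L_2} \circ f .
\]
The identities $\cT(\id_L) = \id_{\cT(L)}$ and $\cT(g \circ f) = \cT(g) \circ \cT(f)$ then follow by uniqueness, since in each case both sides solve the same defining equation; $k$-linearity of $\cT$ is inherited from that of the bijection \eqref{indlim11}.

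For part $\mathit 2$, set $\cS(L) := T_L$. By construction $\cS({\limind_\alpha}^\cL R_\alpha)$ coincides with the morphism $T$ of \eqref{indlim0}, equivalently with the image of $\id_{\cT(L)}$ under \eqref{indlim11}. Naturality of $\cS$ with respect to a morphism $f : L_1 \to L_2$ in $\cI\cR\cR_k$ is exactly the commuting square $\cT(f) \circ T_{L_1} = T_{L_2} \circ f$ that was imposed in the definition of $\cT(f)$. The only point requiring real care, and hence the main technical focus of the argument, is well-definedness of $\cT$ at the level of objects and morphisms: that $\cT(L)$ is intrinsic to $L$ rather than to a chosen presentation, and that the arrow produced by \eqref{indlim11} genuinely lies in $\hom_{\cR\cR_k}$ and not merely in $\hom_{\cL\cR_k}$. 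Both reduce to the observation made in the discussion preceding \eqref{indlim11}, namely that source and target of that isomorphism both equal $\limPRO{\alpha} \hom_{\cR\cR_k}(R_\alpha,R)$ and therefore share a preferred bijection of \emph{ring} morphisms.
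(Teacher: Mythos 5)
Your proposal is correct and takes essentially the same approach as the paper, whose proof simply records that well-definedness of $\cT$ on objects follows from \eqref{indlim11} and that functoriality is ``general nonsense.'' You have merely spelled out that general nonsense explicitly (representability, the universal property defining $\cT$ on morphisms, uniqueness giving functoriality, and naturality of $\cS$ being the defining square), which matches the paper's intent.
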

 \begin{proof}   The fact that the  correspondence of objects
 $$\cT : {\limind_\alpha}^\cL R_\alpha \longmapsto {\limind_\alpha}^\cR R_\alpha
 $$ 
is well-defined follows from \eqref{indlim11}. The  fact that $\cT$ extends to a functor, is general nonsense, and completes the proof of $\mathit 1$. Part $\mathit 2$ is self-explanatory.
 \end{proof}
 \begin{exa} \label{nonlinexa} \hfill
 \ben
 \item
 A typical example of an object of $\cL\cR_{\Z_p}$, but not of  $\cR\cR_{\Z_p}$, is any non-archimedean non-trivially valued field extension $K$ of $(\Q_p,v_p)$. 
 This $K$ is also an object of $\cL\cR_{K^\circ}$ but not of 
 $\cR\cR_{K^\circ}$.  The same situation occurs for any commutative $K$-Banach algebra. 
 \item Let $k = \Z_p$ and let $\Z_p\{x\}$ be the $p$-adic completion of $\Z_p[x]$. So, $\Z_p\{x\}$  is an object of $\cR\cR_k$. Let $F:  \Z_p\{x\}  \to  \Z_p\{x\} $ be the $\cR\cR_k$-morphism 
 such that $F(x) = px$. Consider the inductive system 
 $$(\Z_p\{x\},F) := \Z_p\{x\} \map{F} \Z_p\{x\} \map{F} \dots
 $$
 in $\cR\cR_k$. Then 
 $$
 {\limind}^\cR (\Z_p\{x\},F) =  {\limind}^u (\Z_p\{x\},F)  = \Z_p  \;\;\mbox{while} \;\; {\limind}^\cL (\Z_p\{x\},F)  = \Q_p\{x\} \;.
 $$
 \een
 \end{exa}   
\end{section}
 \begin{section}{Tensor product of rings}
 \label{prodalg}
For two objects $A,B$ of $\cL\cR^u_k$, the $k$-module
$A   \wt^u_k B$ is naturally an object of $\cL\cR^u_k$ with product 
\beq \label{prodalg1} \begin{split}
A  \wt^u_k B  \, \times  \, A  \wt^u_k B &\longrightarrow A   \wt^u_k B
\\
(a_1  \wt^u_k  b_1   ,  a_2  \wt^u_k  b_2) & \longmapsto  a_1 a_2  \wt^u_k  b_1 b_2 \; .
\end{split}
\eeq
If, in particular, $A,B$ are objects of $\cR\cR_k$, so is $A   \wt^u_k B$.
Moreover, for any object $R$ of $\cL\cR^u_k$ the product $R \times R \to R$ factors through a morphism 
$$\mu_R: R \wt^u_k R \to R
$$ in  $\cL\cM^u_k$.  This holds in particular if $R$ is in  $\cR\cR_k$. 
Similarly, for two objects $A,B$ of $\cL\cR_k$, the $k$-module 
$A   \wt^c_k B$ is naturally an object of $\cL\cR_k$ with a similar formula for the product.  For any object $R$ of $\cL\cR_k$ the product $R \times R \to R$ factors through a morphism 
$$\mu_R: R \wt^c_k R \to R
$$ in  $\cL\cM_k$.

\begin{defn} \label{produnif} For objets $A,B,C$ of  $\cL\cR_k$   we denote by   $\Ril_k^c(A \times B, C)$ (resp. $\Ril_k^u(A \times B, C)$) the $k$-submodule of 
  $\Bil_k^c(A \times B, C)$ (resp. $\Bil_k^u(A \times B, C)$) consisting of functions $(a,b) \mapsto \varphi (a,b)$ such that
 for any $a \in A$ (resp. $b \in B$)
the map 
$b \mapsto \varphi (1,b)$ (resp. $a \mapsto \varphi (a,1)$) is a morphism in $\cL\cR_k$ and
$$
 \varphi (a,b) =  \varphi (a,1)  \varphi (1,b) \;.
$$
\end{defn}
For any objets $A,B$ of  $\cL\cR_k$, we consider the functors
\beq \label{Rilcdef}
\Ril_k^c(A \times B, -) : \cL\cR_k \longrightarrow \cM od_k  \;,
\eeq 
\beq \label{Riludef}
\Ril_k^u(A \times B, -)  : \cL\cR_k \longrightarrow \cM od_k  \;.
\eeq

\begin{lemma} \label{tensrep} \hfill
\ben
\item
For any $A,B$ in $\cL\cR_k$, the  functor $\Ril_k^c(A \times B, -)$ of \eqref{Rilcdef}  is represented by the object $A \wt_k^c B$ of  $\cL\cR_k$.
\item
For any $A,B$ in $\cL\cR_k$, the  functor $\Ril_k^u(A \times B, -)$ of \eqref{Riludef}  is represented by the object $A \wt_k^u B$ of  $\cL\cR_k$. 
\item If $A$ and $B$ are objects of $\cL\cR^u_k$ (resp. $\cR\cR_k$), then $A \wt_k^u B$ is an object  of  $\cL\cR^u_k$ (resp. of $\cR\cR_k$).
\item For any $A,B$ in $\cL\cR_k$,   there is a canonical  morphism 
\beq
\label{bilunif20}
\Phi: A \wt_k^c B \longrightarrow A \wt_k^u B  
\eeq
such that, for any $R$ in $\cL\cR^u_k$, $\varphi \mapsto \varphi \circ \Phi$ is the natural inclusion
$$
\Ril_k^u(A \times B, R) \longrightarrow \Ril_k^c(A \times B, R)  \;.
$$ 
\een
\end{lemma}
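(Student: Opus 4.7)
The bulk of the work lies in parts $\mathit 1$ and $\mathit 2$. The plan in both cases ($\bullet \in \{c, u\}$) is first to equip $A \wt^\bullet_k B$ with a natural structure of object of $\cL\cR_k$, and then to check that the representability of $\Bil^\bullet$ provided by Lemma~\ref{tenslemma} restricts, on the subset $\Ril^\bullet \subset \Bil^\bullet$, to the subset of $\cL\cR_k$-morphisms. For the ring structure, on the uncompleted module $A \otimes_k B$ one has the usual algebra multiplication $(a_1 \otimes b_1)(a_2 \otimes b_2) = a_1 a_2 \otimes b_1 b_2$; to extend it continuously to the completion in the case of $\wt^c$, I will use the explicit description \eqref{tenscdef} of a fundamental system of open submodules by the $\Im(P \otimes_k Q)$, for $P \in \cP(A)$ and $Q \in \cP(B)$, and argue much as in the final paragraph of the proof of Lemma~\ref{tenslemma}, combining the continuity of $\mu_A$ and $\mu_B$ for the product topologies of $A \times A$ and $B \times B$. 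The case of $\wt^u$ will be easier because, by \eqref{tensudef}, a fundamental system of open neighborhoods is given by the images of $P \otimes_k B + A \otimes_k Q$, which are already ideals of $A \otimes_k B$.

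For the representability step, note that the canonical bilinear map $\iota: A \times B \to A \wt^\bullet_k B$ lies in $\Ril^\bullet_k(A \times B, A \wt^\bullet_k B)$ by construction of the multiplication. Given $\varphi \in \Ril^\bullet_k(A \times B, C)$ with $C$ in $\cL\cR_k$, Lemma~\ref{tenslemma} yields a unique continuous $k$-linear map $\psi: A \wt^\bullet_k B \to C$ with $\psi \circ \iota = \varphi$; the multiplicativity and unitality conditions in Definition~\ref{produnif} force $\psi((a_1 \wt^\bullet b_1)(a_2 \wt^\bullet b_2)) = \psi(a_1 \wt^\bullet b_1) \psi(a_2 \wt^\bullet b_2)$ and $\psi(1 \wt^\bullet 1) = 1$ on elementary tensors, hence on all of $A \wt^\bullet_k B$ by continuity and density of $A \otimes_k B$. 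So $\psi$ is an $\cL\cR_k$-morphism, and this gives both parts $\mathit 1$ and $\mathit 2$.

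Part $\mathit 3$ is then a formal consequence: the uniformity of $A \wt^u_k B$ when $A$ and $B$ are uniform is part $\mathit 3$ of Proposition~\ref{indmod}; the fact that $A \wt^u_k B$ belongs to $\cR\cR_k$ when $A$ and $B$ do follows from the observation already recorded above that the basic open neighborhoods $\Im(P \otimes_k B + A \otimes_k Q)$ are ideals, so the topology of $A \wt^u_k B$ admits a basis of open ideals. Part $\mathit 4$ follows immediately from parts $\mathit 1$ and $\mathit 2$ via Yoneda applied to the set-theoretic inclusion $\Ril^u_k(A \times B, R) \subset \Ril^c_k(A \times B, R)$, valid for every $R$ in $\cL\cR_k$ because every uniformly continuous map is continuous; the morphism $\Phi$ is simply the one corresponding to the identity of $A \wt^u_k B$ on the right-hand side.

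The main obstacle in the whole argument is checking the continuity of the extended multiplication on $A \wt^c_k B$: since the open neighborhoods $\Im(P \otimes_k Q)$ are only $k$-submodules and not ideals of $A \otimes_k B$, the required bilinear estimate must be carried out by the product-topology argument used in the proof of Lemma~\ref{tenslemma}, rather than by the one-step ideal argument available for $\wt^u$.
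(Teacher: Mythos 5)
Your overall strategy coincides with the paper's: start from the module-level representability of Lemma~\ref{tenslemma}, endow $A\wt^{\bullet}_k B$ with a ring structure extending \eqref{prodalg1}, and check that under the resulting bijection the subset $\Ril^{\bullet}_k(A\times B,X)\subset\Bil^{\bullet}_k(A\times B,X)$ corresponds exactly to the $\cL\cR_k$-morphisms $A\wt^{\bullet}_kB\to X$. Your representability step, the uniformity half of part $\mathit 3$, and part $\mathit 4$ are all fine and match what the paper does (the paper proves only part $\mathit 2$ explicitly and declares the rest clear).

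There is, however, a genuine error at exactly the point you describe as making the $\wt^u$ case ``easier'': for general $A,B$ in $\cL\cR_k$ and general $P\in\cP(A)$, $Q\in\cP(B)$, the submodule $\Im(P\otimes_kB+A\otimes_kQ)$ is \emph{not} an ideal of $A\otimes_kB$. It is an ideal precisely when $P$ and $Q$ are themselves ideals of $A$ and $B$ --- which is why your argument for the $\cR\cR_k$ half of part $\mathit 3$ survives, but not your treatment of part $\mathit 2$. The problem is the cross terms: $(p\otimes b)(a\otimes q)=pa\otimes bq$ with $p\in P$, $q\in Q$ but $a\in A$, $b\in B$ arbitrary need not lie in $\Im(P\otimes_kB+A\otimes_kQ)$. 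Concretely, take $k=\F_p$ discrete, $A=\F_p((T))$ and $B=\F_p((S))$ with their $T$- and $S$-adic topologies; then for every $n,m$ the element
$$\bigl(T^{n}\otimes S^{-m-1}\bigr)\cdot\bigl(T^{-n-1}\otimes S^{m}\bigr)=T^{-1}\otimes S^{-1}$$
is a product of an element of $\Im\bigl(T^{n}\F_p[[T]]\otimes_kB\bigr)$ with an element of $\Im\bigl(A\otimes_kS^{m}\F_p[[S]]\bigr)$, yet it does not lie in $\Im\bigl(T\F_p[[T]]\otimes_kB+A\otimes_kS\F_p[[S]]\bigr)$, since its image in $A/T\F_p[[T]]\otimes_{\F_p}B/S\F_p[[S]]$ is a nonzero elementary tensor. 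So the ``one-step ideal argument'' does not give continuity of the multiplication at $(0,0)$ for the $\wt^u$-topology, and this continuity is precisely what part $\mathit 2$ needs. To be fair, the paper's own proof is equally silent here: it simply invokes the ring structure on $A\wt^u_kB$, which the preamble of Section~\ref{prodalg} only asserts for $A,B$ in $\cL\cR^u_k$. But your write-up replaces that silence with a specific justification that is false; you should either restrict the construction to the case where $P$ and $Q$ may be taken to be ideals (i.e.\ $A,B$ in $\cR\cR_k$), or supply an actual continuity argument handling the cross terms.
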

\begin{proof} The first and the second assertions of the lemma are proved similarly; we only prove the second.  
 
We recall that (for any $A$ and $B$ in $\cL\cM_k$) $A \wt_k^u B$ is the separated completion of $A \otimes_k B$ (meaning of $A^\for \otimes_{k^\for} B^\for$) equipped with a suitable $k$-linear topology. 
For any $X$ in $\cL\cM_k$ we have an isomorphism of $k^\for$-modules
\beq \label{bilrep} \begin{split}
\Bil_k^u(A \times B,  X) &\iso \hom_{\cL\cM_k} (A \wt_k^u B,X) \\
\varphi &\longmapsto (\Phi:a \otimes_k b \mapsto \varphi(a,b))
\;.
\end{split}
\eeq
In the present situation,   we also have a ring structure on  $A \wt_k^u B$ and maps $j_1:A \to A \wt_k^u B$ (resp. $j_2: B \to A \wt_k^u B$) which extend by continuity 
$a \mapsto a \otimes_k 1$ (resp. $b  \mapsto 1 \otimes_k b$) and are morphisms in  $\cL\cR_k$. Then the identification \eqref{bilrep} is characterized by the 
condition that $\Phi \circ j_1 : A \to X$ is $a \mapsto \varphi (a,1)$ while $\Phi \circ j_2 : B \to X$ is $b \mapsto \varphi (1,b)$. 
If $X$ is an object of $\cL\cR_k$ 
the identification \eqref{bilrep} 
restricts to an identification 
$$
\Ril_k^u(A \times B,  X) = \hom_{\cL\cR^u_k} (A \wt_k^u B,X) \;.
$$
The other assertions of the statement are clear. 

\end{proof} 
 
\begin{prop} \label{indrep} \hfill
\ben
\item
Let  $\{A_\alpha\}_\alpha$ and $\{B_\beta\}_\beta$ be   inductive systems  in 
$\cL\cR_k$. 
Then
\beq
\label{bilunif0}
{\limIND\alpha}^\cL A_\alpha \, \wt^c_k \,  {\limIND\beta}^\cL B_\beta =  {\limIND{\alpha,\beta}}^\cL  (A_\alpha \wt^c_k B_\beta )
\;.
\eeq 
\item  
Let $\{A_\alpha\}_\alpha$ and $\{B_\beta\}_\beta$ be  projective systems  in 
$\cL\cR_k$. 
Then
\beq
\label{bilunif1}
{\limPRO\alpha} A_\alpha \, \wt^u_k \,  {\limPRO\beta} B_\beta =  {\limPRO{\alpha,\beta}} A_\alpha \wt^u_k B_\beta  
\;.
\eeq 
\een
\end{prop}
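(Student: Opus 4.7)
The plan is to deduce both parts by Yoneda, reducing each identity of tensor products to an identity of representable functors, and then to pass to the already-established module-level statements.

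For part $\mathit 1$, I would apply Lemma~\ref{tensrep}.1 to both sides. For any object $X$ of $\cL\cR_k$, the left-hand side has
$$
\hom_{\cL\cR_k}\bigl(({\limIND\alpha}^\cL A_\alpha) \wt^c_k ({\limIND\beta}^\cL B_\beta),\,X\bigr) = \Ril_k^c\bigl(({\limIND\alpha}^\cL A_\alpha)\times({\limIND\beta}^\cL B_\beta),\,X\bigr),
$$
while the right-hand side, by the universal property of the colimit in $\cL\cR_k$ and the same lemma, has
$$
\hom_{\cL\cR_k}\bigl({\limIND{\alpha,\beta}}^\cL(A_\alpha \wt^c_k B_\beta),\,X\bigr) = \limPRO{\alpha,\beta}\,\Ril_k^c(A_\alpha\times B_\beta,\,X).
$$
So it suffices to exhibit a natural bijection between these two $k$-modules. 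The analogous bijection for $\Bil_k^c$ in place of $\Ril_k^c$ is provided by \eqref{bilinearcont}, since as objects of $\cL\cM_k$ the inductive limits ${\limIND\alpha}^\cL A_\alpha$ and ${\limIND\beta}^\cL B_\beta$ coincide with the $\cL\cM_k$-colimits (by the construction in Lemma~\ref{colimrings}). It then only remains to verify that the \emph{ring} conditions of Definition~\ref{produnif} are transported correctly through \eqref{bilinearcont}: a coherent family $(\varphi_{\alpha,\beta})$ in $\limPRO{\alpha,\beta}\Ril_k^c(A_\alpha\times B_\beta,X)$ restricts to compatible $\cL\cR_k$-morphisms $A_\alpha\to X$ and $B_\beta\to X$ which glue to $\cL\cR_k$-morphisms out of the two colimits, and multiplicativity $\varphi(a,b)=\varphi(a,1)\varphi(1,b)$ is preserved by the limit $\varphi$ constructed in the proof of \eqref{bilinearcont} since it is preserved along each net. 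This is a routine check.

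For part $\mathit 2$, by Lemma~\ref{colimrings} the $\cL\cR_k$-limits $\limPRO\alpha A_\alpha$ and $\limPRO\beta B_\beta$ coincide with their $\cL\cM_k$-limits, and similarly for the limit on the right-hand side. The underlying isomorphism in $\cL\cM_k$ is then exactly \eqref{billim1} of Proposition~\ref{indmod}. What remains is to observe that both sides carry the same ring structure: the product on the left is defined by \eqref{prodalg1} applied at the level of the tensor product of limits, while the product on the right is the limit of the products on $A_\alpha\wt^u_k B_\beta$; the two agree by the very construction of the product on a limit of topological rings given in the proof of Lemma~\ref{colimrings} (since tensor products of the canonical projections commute with taking the product maps factored through $\wt^u_k$).

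I do not expect a genuine obstacle: the content is essentially formal once the module statements of Proposition~\ref{indmod} and the representability statements of Lemma~\ref{tensrep} are in place. The one place that requires a small amount of care is the verification for part $\mathit 1$ that the ring conditions of Definition~\ref{produnif}---rather than mere $k$-bilinearity---survive the limit procedure used to extend a compatible family $(\varphi_{\alpha,\beta})$ to a global $\varphi$; this is really the only computational step.
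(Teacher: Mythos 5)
Your proposal is correct and follows essentially the same route as the paper: the paper's own proof consists of the single remark that the two identities follow from \eqref{billim0} and \eqref{billim1}, i.e.\ from the module-level statements of Proposition~\ref{indmod}, which is precisely the reduction you carry out. You merely make explicit the routine details the paper leaves implicit (the representability of $\Ril_k^c$ from Lemma~\ref{tensrep}, the fact that the $\cL\cR_k$-colimits and limits are computed on underlying objects of $\cL\cM_k$ by the construction in Lemma~\ref{colimrings}, and the preservation of the multiplicativity condition under the limiting procedure).
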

\begin{proof} The two  statements follow from \eqref{billim0} and  \eqref{billim1}, respectively.   
 \end{proof}
 \end{section}
 \begin{section}{Base change} \label{basechangesec}
\begin{lemma} \label{basechange}
Let $A$ be any object of  $\cR\cR_k$.  
\ben
\item
Let $M$ be an object of  $\cL\cM_k$ (resp. of  $\cL\cM^u_k$). 
The map 
$$A \times (A \otimes_k M) \to A \otimes_k M \;\;,\;\; (a,b \otimes m) \mapsto ab \otimes m \;,
$$  
extends to  
an $A$-bilinear map  
$$A \times (A \wt^c_k M) \to A \wt^c_k M
$$
(resp. 
$$A \times (A \wt^u_k M) \to A \wt^u_k M\;\;\mbox{)}
$$
(resp. uniformly) continuous for the product topology (resp. uniformity) of $A \times (A \wt^c_k M)$ (resp. $A \times (A \wt^u_k M)$)
which makes 
$A \wt_k^c M$ (resp. $A \wt_k^u M$)  into an $A$-linearly topologized (resp. uniform) 
separated and complete $A$-module. 
The correspondence 
\beq  \label{scalext}
\begin{split}
(-)^c_A : \cL\cM_k &\longrightarrow \cL\cM_A
\\
M &\longmapsto  (M)^c_A := A \wt_k^c M
\end{split}
\eeq
(resp.
\beq  \label{scalextu1}
\begin{split}
(-)^u_{A} : \cL\cM^u_k &\longrightarrow \cL\cM^u_A
\\
M &\longmapsto  (M)^u_{A} := A \wt_k^u M\;\;\mbox{)}
\end{split}
\eeq
is part of an additive functor which we call \emph{continuous} (resp. \emph{uniform}) \emph{extension of scalars} by $A$.
The functor $(-)^c_{A}$ (resp. $(-)^u_{A}$) commutes with inductive (resp. projective)  limits in $\cL\cM_k$ and $\cL\cM_A$ (resp. in $\cL\cM^u_k$ and $\cL\cM^u_A$). 
The functor $(-)^c_{A}$  (resp. $(-)^u_{A}$)  is left-adjoint to the natural inclusion functor $\cL\cM_A \hookrightarrow  \cL\cM_k$ (resp. $\cL\cM^u_A \hookrightarrow  \cL\cM^u_k$). Namely,
for any $M$ in $\cL\cM_k$ and $N$ in $\cL\cM_A$ (resp. for any $M$ in $\cL\cM^u_k$ and $N$ in $\cL\cM^u_A$)
\beq \label{adjc}
\hom_{\cL\cM_k}(M,N) = \hom_{\cL\cM_A}((M)^c_{A},N)
\eeq
(resp. 
\beq \label{adju}
  \hom_{\cL\cM^u_k}(M,N) = \hom_{\cL\cM^u_A}((M)^u_{A},N) \; \mbox{)}\;.
\eeq 
\item For any $B$ in $\cL\cR_k$ (resp. in $\cL\cR^u_k$, resp. in $\cR\cR_k$), the  canonical morphism 
$A \to A \wt_k^c B$, $a \mapsto a \otimes 1$, makes $A \wt_k^c B$ into an $A$-linearly (resp. a uniform $A$-linearly, resp. a linearly) topologized 
separated and complete $A$-ring. Moreover
\beq \label{limmon1c}
\begin{split}
(-)^c_{A} : \cL\cR_k &\longrightarrow \cL\cR_A \\
B & \longmapsto    (B)^c_{A} := A \wt_k^c B
\end{split}
\eeq
(resp.
\beq  \label{scalextu2}
\begin{split}
(-)^u_{A} : \cL\cR^u_k &\longrightarrow \cL\cR^u_A
\\
B &\longmapsto  (B)^u_{A} := A \wt_k^u B \;\; ,
\end{split}
\eeq
resp.
\beq  \label{scalextu3}
\begin{split}
(-)^u_{A} : \cR\cR_k &\longrightarrow \cR\cR_A
\\
B &\longmapsto  (B)^u_{A} := A \wt_k^u B \;\;\mbox{)}
\end{split}
\eeq

is an additive functor, commuting with inductive limits in $\cL\cR_k$ and $\cL\cR_A$ (resp. with projective limits in $\cL\cR^u_k$ and $\cL\cR^u_A$, 
resp. with projective limits in $\cR\cR_k$ and $\cR\cR_A$), which we call \emph{continuous} (resp. \emph{uniform}, resp. \emph{uniform}) \emph{base-change} by $A$.  
The functor $(-)^c_{A}$  (resp. $(-)^u_{A}$, resp. $(-)^u_{A}$) is left-adjoint to the natural inclusion functor $\cL\cR_A \hookrightarrow  \cL\cR_k$ (resp. $\cL\cR^u_A \hookrightarrow  \cL\cR^u_k$, resp. $\cR\cR_A \hookrightarrow  \cR\cR_k$). Namely,
for any $R$ in $\cL\cR_k$ and $S$ in $\cL\cR_A$ (resp. for any $R$ in $\cL\cR^u_k$ and $S$ in $\cL\cR^u_A$, resp. for any $R$ in $\cR\cR_k$ and $S$ in $\cR\cR_A$)
\beq
\hom_{\cL\cR_k}(R,S) = \hom_{\cL\cR_A}((R)^c_{A},S) 
\eeq
(resp. 
\beq
\hom_{\cL\cR^u_k}(R,S) = \hom_{\cL\cR^u_A}((R)^u_{A},S)   \;,
\eeq 
resp. 
\beq
\hom_{\cR\cR_k}(R,S) = \hom_{\cR\cR_A}((R)^u_{A},S) \;\;\mbox{)} \;.
\eeq 
\een
In particular, $(-)^u_{A} :  \cR\cR_k \to  \cR\cR_A$ commutes with direct limits. 
\end{lemma}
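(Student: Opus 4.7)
The plan is to treat the module and ring statements in parallel, reducing everything to the representability results of Lemma~\ref{tenslemma} and the commutation with limits proved in Proposition~\ref{indmod} (resp.\ Proposition~\ref{indrep}). The functoriality and adjointness will then be essentially formal.

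First I would construct the $A$-module structure on $(M)^c_A=A\wt_k^c M$. The $k$-bilinear, continuous map
$A\times(A\otimes_kM)\to A\otimes_kM$, $(a,b\otimes m)\mapsto ab\otimes m$, is $A$-linear in the second argument; viewing it as a family of $k$-linear endomorphisms $\lambda_a:A\otimes_kM\to A\otimes_kM$ parameterized continuously by $a\in A$, it extends to $A\times(A\wt_k^c M)\to A\wt_k^c M$ by the universal property of $\wt_k^c$ applied to $A\times A\to A$ and to the completion of $A\otimes_kM$. To see this is continuous for the product topology at $(a,x)$, I pick a neighborhood $U$ of $ax$, write $U$ as the preimage of a neighborhood in some quotient $A/I\otimes_kM/P$ (with $I\in\cP(A)$, $P\in\cP(M)$), and use the continuity of $A\times A\to A$ together with the open images $\Im(I\otimes_kM),\Im(A\otimes_kP)$ that generate the topology of $A\wt_k^c M$. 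The uniform case is entirely analogous but uses that $A\in\cR\cR_k$ has a uniformly continuous product (Lemma~\ref{colimrings}) and that $\wt_k^u$ produces uniform objects (Proposition~\ref{indmod}(\textit{3})), so the extended scalar product on $A\wt_k^u M$ is uniformly continuous for $A$ acting. Separation and completeness are inherited from the construction of $\wt_k^c$ (resp.\ $\wt_k^u$) as limits of discrete quotients.

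For the functoriality of $(-)^c_A$ and $(-)^u_A$, given a morphism $M\to N$, the induced map on $A\otimes_k -$ is continuous (resp.\ uniformly continuous) by construction and extends to $A\wt_k^c-$ (resp.\ $A\wt_k^u-$); additivity is obvious. Commutation with inductive limits for $(-)^c_A$ follows from \eqref{billim0} applied to the constant system $A$ on one side:
\begin{equation*}
A\wt_k^c({\limIND\alpha} M_\alpha)={\limIND\alpha}(A\wt_k^c M_\alpha),
\end{equation*}
and the fact (Lemma~\ref{limcolim0}) that the colimit in $\cL\cM_A$ of $\{(M_\alpha)^c_A\}$ agrees with the underlying $\cL\cM_k$-colimit since all transition maps are $A$-linear. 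Commutation of $(-)^u_A$ with projective limits comes from \eqref{billim1} in the same way.

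The adjunctions \eqref{adjc} and \eqref{adju} reduce to representability. Indeed, for $N\in\cL\cM_A$, a continuous $k$-linear map $M\to N^\for$ corresponds by Lemma~\ref{tenslemma} to a continuous $k$-bilinear map $A\times M\to N$ (namely $(a,m)\mapsto a\cdot f(m)$), i.e.\ a morphism $A\wt_k^c M\to N$ in $\cL\cM_k$; one then checks this morphism is automatically $A$-linear using density of $A\otimes_kM$ in $A\wt_k^c M$. The uniform variant is identical with $\wt_k^u$ and $\Bil_k^u$.

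For part (\textit{2}), given $B\in\cL\cR_k$ (resp.\ $\cL\cR^u_k$, resp.\ $\cR\cR_k$), Section~\ref{prodalg} already endows $A\wt_k^c B$ (resp.\ $A\wt_k^u B$) with a ring structure. The inclusion $A\to A\wt_k^c B$ via $a\mapsto a\wt 1$ is a morphism in $\cR\cR_k$, hence the $A$-module structure constructed in (\textit{1}) agrees with the one induced by ring multiplication. The adjunction for rings then follows from Lemma~\ref{tensrep}: for $S\in\cL\cR_A$, a $\cL\cR_k$-morphism $B\to S$ pairs with the structural map $A\to S$ to give an element of $\Ril_k^c(A\times B,S)$, which by Lemma~\ref{tensrep}(\textit{1}) is exactly $\hom_{\cL\cR_k}(A\wt_k^c B,S)$, and $A$-linearity of the resulting map is automatic from the ring structure. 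The uniform and linear cases are identical using Lemma~\ref{tensrep}(\textit{2},\textit{3}). Commutation with colimits for $(-)^c_A$ (resp.\ with projective limits for the two uniform versions) follows from \eqref{bilunif0} (resp.\ \eqref{bilunif1}); that $(-)^u_A:\cR\cR_k\to\cR\cR_A$ commutes with \emph{direct} limits is the formal consequence of the adjunction just established.

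The main delicate point I expect is verifying that the topology on $(M)^c_A$ is genuinely $A$-linear (not merely $k$-linear) and that the extended scalar action is continuous for the \emph{product} topology: this requires using the factorization of the product of $A$ through $A\wt_k^u A$ (which exists because $A\in\cR\cR_k$) together with the explicit description of the topology of $\wt_k^c$ in \eqref{tenscdef}. Everything else is either formal adjunction bookkeeping or a direct application of the universal properties already established.
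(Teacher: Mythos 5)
Your treatment of the adjunctions is sound and close to the paper's: the paper proves \eqref{adjc} exactly as you do, by identifying $\hom_{\cL\cM_A}(A\wt^c_kM,N)$ with the continuous $k$-bilinear maps $A\times M\to N$ that are $A$-linear in the first variable, and these with $\hom_{\cL\cM_k}(M,N)$; for \eqref{adju} the paper instead computes both sides via the double-limit formula \eqref{modhom} over discrete quotients, while you rerun the bilinear argument with $\Bil^u_k$ -- a valid variant, provided you verify that $(a,m)\mapsto af(m)$ is \emph{uniformly} continuous (this uses that the $A$-action on $N\in\cL\cM^u_A$ is uniformly continuous, giving $J\in\cP(A)$ and $Q'\in\cP(N)$ with $JN\subset Q$ and $AQ'\subset Q$). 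Two small repairs: commutation of $(-)^c_A$ with colimits \emph{computed in} $\cL\cM_A$ should be deduced from left-adjointness, not from Lemma~\ref{limcolim0}, which compares $\cL\cM_k$ with $\cL\cM^u_k$, not with $\cL\cM_A$.

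The genuine gap is in your first step, the continuity of the extended scalar product $A\times(A\wt^c_kM)\to A\wt^c_kM$ -- which is also the one assertion the paper's own proof silently omits. You say the topology of $A\wt^c_kM$ is generated by $\Im(I\otimes_kM)$ and $\Im(A\otimes_kP)$; those are the generators of the $\wt^u_k$-topology. By Lemma~\ref{tenslemma} the $\wt^c_k$-topology is generated by the much smaller submodules $\Im(J\otimes_kP)$, $J\in\cP(A)$, $P\in\cP(M)$, and continuity at a point $(a,x)$ then forces, for each such $J,P$, the existence of an open ideal $J'$ of $A$ with $J'x\subset\ol{\Im(J\otimes_kP)}$. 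This does not follow from the continuity of $\mu_A$ or from the universal property of $\wt^c_k$, and it can fail: take $k=\Z_p$, $A=\Z_p[[T]]$ with the $(p,T)$-adic topology (an object of $\cR\cR_{\Z_p}$), $M=\Q_p$, $x=1\otimes1$ and $U=\ol{\Im((p,T)\otimes_kp\Z_p)}$. Every element of $\Im((p,T)\otimes_kp\Z_p)+\Im((p,T)^n\otimes_kp^m\Z_p)$ is a power series with all coefficients in $p\Z_p$, so $T^N\otimes1\notin U$ for every $N$; since every open ideal $J'$ of $A$ contains some $T^N$, no $J'$ works, and the scalar product is not continuous at $(0,x)$ for this topology. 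So this step cannot be closed as you argue it; it needs either extra hypotheses (as in Lemma~\ref{basechange11} and Proposition~\ref{basechange1}, where canonical topologies and pro-flatness are assumed) or a reformulation. The $\wt^u_k$ half of the step is unproblematic, since there $J'(A\otimes_kM)\subset\Im(J\otimes_kM)$ already for $J'\subset J$.
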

\begin{proof} 
We prove the adjunction property for $(-)^u_{A}$. 
Let $M$ (resp. $N$) be an object of $\cL\cM^u_k$ (resp. $\cL\cM^u_A$). 
Then  from \eqref{modhom} we get 
\beq \label{modhom3}
\hom_{\cL\cM^u_A}(A \wt^u_k M ,N) = \limpro_{Q \in \cP(N)}  \limind_{P \in \cP(M)} \hom_A(M/P  \otimes_k A,N/Q)
\eeq
where $\hom_A(M/P  \otimes_k A,N/Q) = \hom_{A/J}(M/P  \otimes_k A/J,N/Q)$ for any $J \in \cP(A)$ such that $JN \subset Q$ and $(J \cap k)M \subset P$. 
If $J \cap k \supset I \in \cP(k)$ so that $IM \subset P$, the latter equals $\hom_{k/I}(M/P,N/Q)$ and we conclude by  \eqref{modhom}.
\par We now prove the adjunction property for $(-)^c_{A}$. 
Let $M$ (resp. $N$) be an object of $\cL\cM_k$ (resp. $\cL\cM_A$). Then 
\beq \label{modhom5}
\begin{split}
& \hom_{\cL\cM_A} (A \wt^c_k M ,N)   = \{ \varphi \in \hom_{\cL\cM_k}(A \wt^c_k M ,N) | \, \varphi \, \mbox{is $A$-linear}\, \} 
= \\ \{ \varphi \in & \Bil_k^c(A \times M ,N) | \, \varphi\, \mbox{is $A$-linear in the first variable}\, \} = \hom_{\cL\cM_k}(M ,N) \;.
\end{split}
\eeq 
\end{proof}
 
\begin{defn} \label{tffdef} 
Let $f: A \to B$ be a morphism in $\cR\cR_k$. We say that $f$  is \emph{pro-flat} or that $B$ is \emph{pro-flat}  over $A$ if, for any open ideal $J$ in $B$, $B/J$ is a flat $A/f^{-1}(J)$-module. 
 \end{defn}

 \begin{lemma}  \label{basechange11} Let $M$ and $N$ be objects of $\cL\cM_k$ endowed with the $k$-canonical topology (so, in particular,  $M$ and $N$ are uniform) and let $A$ be an object of $\cR\cR_k$.
\ben
\item $M \wt_k^u N$  carries the $k$-canonical topology. If $k$   satisfies {\bf OPW} then  $M \wt_k^c N  = M \wt_k^u N$.
\item
 The morphism $(M)^c_{A} \to (M)^u_{A}$ is an isomorphism of $\cL\cM^u_A$ and both objects  carry the $A$-canonical topology.  
\item Let $I$ (resp. $J$)  be an open ideal  of $k$ (resp. $A$) such that $IA \subset J$.  If $M/\ol{IM}$ is a flat $k/I$-module, 
$(M)^u_{A}/\ol{J(M)^u_{A}} = (M)^c_{A}/\ol{J(M)^c_{A}}$ is a flat $A/J$-module. 
\item Assume $A$ is pro-flat over $k$ and let  $L$ be an open sub-object of $M$. Then $(L)^u_{A}$ is  an open  sub-object of $(M)^u_{A}$.  
\item If $A$ is pro-flat over $k$,  for any open sub-object $L$ of $M$, $(L)^u_{A} = (L)^c_{A}$  is  an open  sub-object of $(M)^u_{A} = (M)^c_{A}$ and all these objects carry the 
$A$-canonical topology. 
\een
\end{lemma}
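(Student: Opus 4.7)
For Part 1, I will use the $k$-canonical presentations $M = \limPRO{I} M/\ol{IM}$ and $N = \limPRO{I} N/\ol{IN}$ indexed by open ideals $I$ of $k$. Proposition~\ref{indmod}(2) yields
\[
M \wt^u_k N \;=\; \limPRO{I,I'} (M/\ol{IM}) \otimes_k (N/\ol{I'N}) \;=\; \limPRO{I} (M/\ol{IM}) \otimes_{k/I} (N/\ol{IN}),
\]
the second equality by restricting to the cofinal diagonal. The $k$-canonicity then follows by identifying the kernel of the $I$-th projection with $\ol{I(M \wt^u_k N)}$: the algebraic identity $IM \otimes_k N + M \otimes_k IN = I(M \otimes_k N)$ combined with density of $M \otimes_k N$ in the completion forces both closures to agree. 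For the OPW assertion, the fundamental opens of $\wt^c$ on $M \otimes_k N$ are images of $\ol{IM} \otimes_k \ol{JN}$, contained in $IJ \cdot (M \otimes_k N)$, while those of $\wt^u$ reduce to $(I+J)(M \otimes_k N)$; under OPW, for any open ideal $L$ of $k$ the ideal $L^2$ is open and contained in $L$, exhibiting a cofinal subfamily common to both topologies.

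For Part 2, I first show $(M)^u_{A}$ is $A$-canonical. Using Proposition~\ref{indmod}(2) and the linear topology of $A$,
\[
(M)^u_{A} \;=\; \limPRO{J,\, I:\, IA \subset J} (A/J) \otimes_{k/I} (M/\ol{IM}),
\]
and for $IA \subset J$ the fundamental $\wt^u$-open $\Im(J \otimes_k M + A \otimes_k \ol{IM})$ of $A \otimes_k M$ has closure in $(M)^u_{A}$ equal to $\ol{J(M)^u_{A}}$, since $\Im(A \otimes_k \ol{IM})$ lies in $\ol{I(A \otimes_k M)} \subset \ol{J(M)^u_{A}}$ by $IA \subset J$. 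For the identification $(M)^c_{A} \iso (M)^u_{A}$, the key observation is that when $M$ is $k$-canonical and $N$ is $k$-uniform, every continuous $k$-linear map $\varphi: M \to N$ is automatically uniformly continuous: given $U \subset N$ open, $\ol{IN} \subset U$ for some open ideal $I$ of $k$ by uniformity, whence $\varphi(\ol{IM}) \subset \ol{I\varphi(M)} \subset \ol{IN} \subset U$. Combined with the adjunctions of Lemma~\ref{basechange}, both $(M)^c_{A}$ and $(M)^u_{A}$ represent the same functor on $\cL\cM^u_A$. The main technical obstacle is to verify that $(M)^c_{A}$ itself lies in $\cL\cM^u_A$, so that Yoneda delivers the isomorphism; this reduces, via the identification $\ol{\Im(J \otimes_k \ol{IM})} = \ol{IJ \cdot (M)^c_{A}}$ in the completion, to controlling the openness of ideals $IJ$ in $A$ using the structure of $A \in \cR\cR_k$.

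For the remaining parts, Part 3 is immediate from Part 2: with $I = f^{-1}(J) \cap k$ one has $(M)^u_{A}/\ol{J(M)^u_{A}} = (A/J) \otimes_{k/I} (M/\ol{IM})$, and base change of the flat $k/I$-module $M/\ol{IM}$ along the $k/I$-algebra $A/J$ preserves flatness. For Part 4, Lemma~\ref{opencan} gives that $L$ is $k$-canonical, and for $I$ small enough that $\ol{IM} \subset L$ the map $L/\ol{IM} \hookrightarrow M/\ol{IM}$ is injective; pro-flatness of $A$ over $k$ yields $A/J$ flat over $k/I'_J$, with $I'_J = f^{-1}(J) \cap k$ tending to $0$ as $J$ shrinks (hence a cofinal subsystem), so tensoring preserves the injection at each level. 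Passing to the projective limit yields the closed injection $(L)^u_{A} \hookrightarrow (M)^u_{A}$, and openness follows from Lemma~\ref{exactsub}(3) applied to the image system $\ol{N}_{(J,I)} = (A/J) \otimes_{k/I'_J} (L/\ol{I'_J M})$, whose transition maps are surjective as tensors of surjections. Part 5 then simply combines Parts 2, 3, and 4.
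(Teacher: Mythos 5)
Parts 1, 3, 4 and 5 of your argument track the paper's proof closely: part 1 rests, as in the paper, on the identity $IM\otimes_k N+M\otimes_k IN=I(M\otimes_k N)$ together with the cofinality of the ideals $IJ$ under {\bf OPW}; part 3 is the same base-change-of-flatness computation; part 4 tensors the defining projective system of the open sub-object $L$ level by level, uses pro-flatness for injectivity and surjectivity of transition maps, and passes to the limit exactly as the paper does. These parts are fine (modulo the usual identification of $L/\ol{IM}$ with the image of $L$ in $M/\ol{IM}$ for small $I$).

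The problem is in part 2, precisely at the point you yourself flag as ``the main technical obstacle''. Your Yoneda strategy is legitimate only once you know that $(M)^c_A$ lies in $\cL\cM^u_A$, and you assert that this ``reduces to controlling the openness of ideals $IJ$ in $A$ using the structure of $A\in\cR\cR_k$''. That control is not available from the stated hypotheses: for a general $A\in\cR\cR_k$ the ideal $IJ$ (indeed already $IA$) need not be open in $A$. Take $k=\Z_p$, $A=\Z_p[[T]]$ with the $(p,T)$-adic topology, $I=p\Z_p$ and $J=(p,T)$: then $IJ=p\,(p,T)$ contains no power of $(p,T)$, since $T^N\notin p\,\Z_p[[T]]$. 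Consequently the basic $\wt^c_k$-open $\Im\bigl(J\otimes_k\ol{IM}\bigr)$, whose algebraic content is $(IJ)(A\otimes_k M)$, need not contain any $J'(A\otimes_k M)$ with $J'$ open in $A$, so the uniformity of $(M)^c_A$ --- and with it the isomorphism $(M)^c_A\iso(M)^u_A$ --- is left unestablished. The paper's own proof does not take the Yoneda detour: it compares the system $\{\Im(J\otimes_k \ol{IM})\}_{I,J}$ of basic opens directly with $\{(J+IA)(A\otimes_k M)\}_{I,J}$ and extracts the cofinal subfamily $\{J(A\otimes_k M)\}_{IA\subset J}$, which is exactly the cofinality statement your reduction is missing. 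You must either prove that cofinality or add an explicit hypothesis (for instance that $IA$ is open in $A$ for every $I\in\cP(k)$, or an {\bf OPW}-type condition relating $k$ and $A$) under which your reduction actually closes; as written, part 2 is not proved.
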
 
\begin{proof} \hfill
\par 
$\mathit 1.$  According to \eqref{tensudef} it suffices to show  that
$$
C := \Ker (M \wt_k^u N   \to  M/\ol{IM} \otimes_k N/\ol{IN}) 
$$ is  the closure of $I (M \wt_k^u N)$ in $M \wt_k^u N$.
By \cite[II, \S 3, n.  6, Cor. 1 of Prop. 6]{algebra}, the kernel of 
the morphism
$$
M^\for \otimes_{k^\for}  N^\for \longrightarrow M^\for/I M^\for \otimes_{k^\for} N^\for/I N^\for
$$
is 
$$
C' := M^\for \otimes_{k^\for} I N^\for + I M^\for \otimes_{k^\for} N^\for = I(M^\for \otimes_{k^\for}  N^\for)
\; .
$$
So, $C$ coincides with the
 closure of $C'$ in $M\wt^u_k N$   and therefore also with  
the closure of 
 $I (M \wt_k^u N)$ in $M\wt^u_k N$, as claimed.  
 \par
 If $k$  satisfies {\bf OPW}, then the ideals of the form $IJ$, for $I,J \in \cP(k)$ are open. So the system of $k$-submodules $IM \otimes_kJN$, for $I,J \in \cP(k)$, is cofinal with the system 
 $IM \otimes_k N$, for $I \in \cP(k)$. So, $M\wt^c_k N$ coincides with $M\wt^u_k N$. 
\par $\mathit 2.$ $(M)^c_{A}$ is the completion of $A \otimes_k M$ in the $k$-linear topology with fundamental system of open $k$-submodules 
given by the family of the $\Im(J   \otimes I\,M)$, where $I$ (resp. $J$) runs over a fundamental system of open ideals of $k$ (resp. $A$). This equals the
fundamental system of open $A$-submodules $\{\Im((J + I \,A) \otimes M)\}_{I,J}$ for $I,J$ as before.  A cofinal system $\cF$  is given by the condition $I \, A \subset J$, that is by 
$$\cF = \{ \Im((J + I \,A) \otimes M) \,\}_{I \subset J} = \{\Im(J \otimes_k M)\}_J =  \{J \,(A \otimes_k M)\}_J 
\;,
$$
so that the topology obtained on $(M)^c_{A}$ is indeed the $A$-canonical one.  Similarly for $(M)^u_{A}$.
\par $\mathit 3.$ 
Let $J$ be 
any open ideal of $A$, and let $I$ be an open ideal of $k$ such that $IA \subset J$. Then 
$$(M)^c_{A}/\ol{J(M)^c_{A}} = (M)^c_{A}/ \ol{J \otimes_k M} =  (M)^c_{A}/ \ol{J \otimes_k M + A \otimes_k IM}   = A/J \otimes_{k/I} M/\ol{IM} 
$$ 
is a flat $A/J$-module. Similarly for $(M)^u_{A}/\ol{J(M)^u_{A}}$. 
\par $\mathit 4.$ The open sub-object $L$ of $M$ may be described as a projective system of submodules $\ol{L}_I \subset M/\ol{IM}$, for $I \in \cP(k)$, such that the maps 
$\ol{L}_J \to \ol{L}_I$, for $J\subset I$ in $\cP(k)$, are surjective for sufficiently small $I$. This properties are inherited by the projective system  $\ol{L}_I \otimes_{k/I} A/H$ for any 
$H \in \cP(A)$, where $\ol{L}_I \otimes_{k/I} A/H \to M/\ol{IM} \otimes_{k/I} A/H$  is injective since $A$ is pro-flat. The projective system  $\ol{L}_I \otimes_{k/I} A/H$ then  defines an open sub-object of 
$(M)^u_{A}$. Since $(-)^u_{A}$ commutes with projective limits, the open sub-object of $(M)^u_{A}$ which we get is 
$(L)^u_{A}$.
\par $\mathit 5.$  This is simply a summary of what we have proven, taking into account Remark~\ref{opencanon}.
\end{proof}

\begin{prop} \label{tenspsudcan}  We assume that $k$   satisfies condition {\bf OPW}. Let $M$ and $N$ be objects of $\cP\cB_k$, with gauges $\cG(M)$ and $\cG(N)$, respectively. 
Then  $M \wt^c_k N$ is an object of $\cP\cB_k$ with gauge 
$$\{P \wt^c_k Q = P \wt^u_k Q\}_{P,Q} 
$$
where $P \in \cG(M)$, $Q \in \cG(N)$. In particular, 
\beq \label{timesc}
M \wt^c_k N =  \limind_{P,Q} P \wt^u_k Q \;.
\eeq
\end{prop}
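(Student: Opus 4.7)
The strategy is to exhibit the family $\{P \wt^u_k Q\}_{P \in \cG(M), Q \in \cG(N)}$ as a gauge on $M \wt^c_k N$ by verifying the three axioms of Definition~\ref{pseudocan}, and to deduce \eqref{timesc} from the colimit description \eqref{billim0} of $\wt^c_k$.

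Axioms (1) and (2) follow directly from Lemma~\ref{basechange11}(1) together with flatness of tensor products. Both $P$ and $Q$ are $k$-canonical by axiom (1) of their respective gauges; hypothesis {\bf OPW} on $k$ then gives, via Lemma~\ref{basechange11}(1), the identification $P \wt^c_k Q = P \wt^u_k Q$ and shows this object carries the $k$-canonical topology, which is axiom (1) of the gauge on $M \wt^c_k N$. For axiom (2), the proof of the same lemma supplies
\begin{equation*}
(P \wt^u_k Q)/\ol{I(P \wt^u_k Q)} \;\cong\; P/\ol{IP} \otimes_{k/I} Q/\ol{IQ}
\end{equation*}
for every open ideal $I \subset k$, and the right-hand side is flat over $k/I$ as a tensor product of flat $k/I$-modules (by axiom (2) for the gauges on $M$ and $N$).

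For axiom (3) and the colimit formula, I would write $M = \limind^\cL_P P$ and $N = \limind^\cL_Q Q$ in $\cL\cM_k$, after enlarging $\cG(M)$ and $\cG(N)$ by finite sums of gauge elements if necessary to ensure the systems are directed (these sums remain gauge elements by the same flatness-preservation argument). By Remark~\ref{openind}, each transition is an open embedding. Applying \eqref{billim0} of Proposition~\ref{indmod} together with axiom (1) established above yields
\begin{equation*}
M \wt^c_k N \;=\; \limind^\cL_{P,Q} (P \wt^c_k Q) \;=\; \limind^\cL_{P,Q} (P \wt^u_k Q),
\end{equation*}
which is \eqref{timesc}.

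The main obstacle is to verify that each $P \wt^u_k Q$ genuinely embeds as an open sub-object of $M \wt^c_k N$, i.e.\ that the transition maps $P \wt^u_k Q \to P' \wt^u_k Q'$ (for $P \subset P'$ in $\cG(M)$ and $Q \subset Q'$ in $\cG(N)$) are open embeddings; openness in the colimit then follows from the structure recalled in Remark~\ref{openind}. Openness of the transitions is a matter of matching the two canonical topologies via the cofinal basis $\{\ol{I(\cdot)}\}$. Injectivity is the delicate point: it is \emph{not} true in general that $P/\ol{IP} \to P'/\ol{IP'}$ is injective at finite level (since $\ol{IP'} \cap P$ can be strictly larger than $\ol{IP}$). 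The argument I would pursue exploits the flatness of $P'/\ol{IP'}$ and $Q'/\ol{IQ'}$ over $k/I$, together with the completeness of the projective limit presentation $P \wt^u_k Q = \limpro_I P/\ol{IP} \otimes_{k/I} Q/\ol{IQ}$, to show that the finite-level kernels form a null-system in the projective limit and hence that the limit map is injective.
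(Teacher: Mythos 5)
Your proof follows the same architecture as the paper's: conditions $\mathit 1$ and $\mathit 2$ of Definition~\ref{pseudocan} via Lemma~\ref{basechange11}, condition $\mathit 3$ and \eqref{timesc} via \eqref{billim0}, with the whole weight resting on showing that the transition maps $P \wt^u_k Q \to P' \wt^u_k Q'$ are open embeddings. Where you diverge is precisely the injectivity of these transitions, and your divergence is justified. The paper's own argument (the embedded Lemma~\ref{tenspsudcan1}) asserts that each finite-level map $P/\ol{IP} \to P'/\ol{IP'}$ is already injective because the subspace topology of $P$ in $P'$ is the canonical one, ``which means that $\ol{IP'}\cap P = \ol{IP}$''. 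But equality of topologies only gives mutual cofinality of the filter bases $\{\ol{IP'}\cap P\}_I$ and $\{\ol{IP}\}_I$, not termwise equality: for $k=\Z_p$, $P=p\Z_p\subset P'=\Z_p$ (both gauge elements of $\Q_p$, cf.\ Remark~\ref{TVS}) and $I=p^n\Z_p$, one has $\ol{IP'}\cap P = p^n\Z_p \supsetneq p^{n+1}\Z_p = \ol{IP}$, and $p\Z_p/p^{n+1}\Z_p \to \Z_p/p^n\Z_p$ has kernel $p^n\Z_p/p^{n+1}\Z_p\neq 0$. So your observation that finite-level injectivity can fail, and your decision to argue instead that the kernels form a null system in the projective limit, correct a real defect in the paper's proof rather than merely rephrasing it.

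That said, you leave the null-system argument as a strategy; it should be written out, and it does close. Factor the transition through $P'\wt^u_k Q$. By flatness of $Q/\ol{IQ}$ (condition $\mathit 2$ for $\cG(N)$), the level-$I$ kernel of $P/\ol{IP}\otimes_{k/I}Q/\ol{IQ} \to P'/\ol{IP'}\otimes_{k/I}Q/\ol{IQ}$ is $K_I\otimes_{k/I}Q/\ol{IQ}$ with $K_I=(\ol{IP'}\cap P)/\ol{IP}$. Cofinality of the two filter bases gives, for each $I$, some $J\subset I$ with $\ol{JP'}\cap P\subset \ol{IP}$, so the transition $K_J\to K_I$ is zero; hence $\limpro_I \bigl(K_I\otimes_{k/I}Q/\ol{IQ}\bigr)=0$, and left-exactness of $\limpro$ gives injectivity of the limit map. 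The second factor is handled symmetrically using flatness of $P'/\ol{IP'}$, and openness then follows, as in the paper, from surjectivity of the transition maps on the images for $I$ sufficiently small (Lemma~\ref{exactsub}). One further unjustified step in your write-up: you enlarge the gauges by finite sums ``by the same flatness-preservation argument'', but $(P+P')/\ol{I(P+P')}$ is a quotient of flat modules and there is no general reason for it to be flat; the directedness of the index system needs a genuine argument (the paper is silent on this point as well).
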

\begin{proof} The case of $k$ discrete follows from Remark~\ref{psudodiscr}. We then assume that $k$ is not discrete. 
The fact that $P \wt^c_k Q = P \wt^u_k Q$ is $\mathit 1$ of Lemma~\ref{basechange11}. We prove the first part of the statement.  We first need to show that 
\begin{lemma}  \label{tenspsudcan1}
Any  morphism $P \wt_k^u Q \to P' \wt_k^u Q'$ in \eqref{timesc}, for $P \subset P'$ in $\cG(M)$ and $Q \subset Q'$ in $\cG(N)$, is an open embedding. So,  $P \wt_k^u Q$ is an open sub-object  
  of  $P' \wt_k^u Q'$ 
carrying the canonical topology. 
\end{lemma}
\begin{proof}
  From  $\mathit 1$ of Lemma~\ref{basechange11}, we know that both source and target carry the  canonical topology.
To show that   $P \wt^u_k Q \to P' \wt^u_k Q'$ is an open sub-object  
we first need  to show that, for any open ideal $I$ of $k$, the map 
  $$P \wt^u_k Q/\ol{I(P \wt^u_k Q)} \to P' \wt^u_k Q'/\ol{I(P' \wt^u_k Q')}  $$
is injective.   But this map coincides with 
\beq
\label{flatness}
P/\ol{IP} \otimes_{k/I} Q/\ol{IQ} \to P'/\ol{IP'} \otimes_{k/I} Q'/\ol{IQ'} \;.
\eeq
The latter  is injective for the following reasons. First of all both maps 
 $$P/\ol{IP}  \to P'/\ol{IP'}  \;\;\mbox{and}\;\; Q/\ol{IQ} \to  Q'/\ol{IQ'}
  $$
are injective, because the subspace topology of $P$ in $P'$ (resp. of $Q$ in $Q'$) is the canonical topology of $P$ (resp. $Q$), which means that $\ol{IP'} \cap P = \ol{IP}$ (resp. $\ol{IQ'} \cap Q = \ol{IQ}$).
Then, by the flatness assumptions, the map \eqref{flatness} is injective.  
We then need to check that 
for any $J \subset I$ in $\cP(k)$, and for $I$ sufficiently small, 
$$
P/\ol{JP} \otimes_{k/J} Q/\ol{JQ}  \to  P/\ol{IP} \otimes_{k/I} Q/\ol{IQ}$$
and 
$$ P'/\ol{J'P} \otimes_{k/J} Q'/\ol{JQ'}  \to  P'/\ol{IP'} \otimes_{k/I} Q'/\ol{IQ'} 
$$
are both surjective. But this is clear by right-exactness of the tensor product in $\cM od_k$.  
 This implies at the same time 
 that  the  open sub-object $P \wt^u_k Q$ of $P' \wt^u_k Q'$ carries the canonical topology. 
 \end{proof}
We now get back to the proof of Proposition~\ref{tenspsudcan}.
Condition $\mathit 1$ of Definition~\ref{pseudocan} holds by Lemma~\ref{tenspsudcan1}. 
 Conditions $\mathit 2$ holds because, for any open ideal $I$ of $k$,  
 $$P \wt^u_k Q/\ol{IP \wt^u_k Q + P \wt^u_k IQ} = P/\ol{IP} \otimes_{k/I} Q/\ol{IQ}$$
  is a flat $k/I$-module. Part
$\mathit 3$  of that definition follows from \eqref{timesc}.
But  \eqref{timesc} follows from \eqref{billim0} and the fact that $P \wt^u_k Q = P \wt^c_k Q$ proven in $\mathit 1$ of  
Lemma~\ref{basechange11}. 
\end{proof}
 
\begin{prop} \label{basechange1}  
Let $A$   
be an object of $\cR\cR_k$ and $M$ be an object of $\cP\cB_k$. \hfill
\ben
\item  If $A$ is pro-flat over $k$,  $(M)^c_{A}$ 
 is an object of $\cP\cB_A$.  
 \item
 If $k$ satisfies {\bf OPW}, then $(\cP\cB_k , \wt^c_k )$ is a monoidal category.
\item If both $k$ and $A$  satisfy {\bf OPW} and $A$ is pro-flat over $k$,
the   functor $M \mapsto (M)^c_{A}$ induces an additive functor of monoidal categories 
$$(\cP\cB_k , \wt^c_k ) \to (\cP\cB_A , \wt^c_A) \;. $$
\een
 \end{prop}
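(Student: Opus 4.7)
My plan is to attack the three parts in order, each building on the previous. For Part 1, I propose the gauge $\cG((M)^c_A) := \{(P)^c_A : P \in \cG(M)\}$ and verify the three conditions of Definition~\ref{pseudocan}. Each $P \in \cG(M)$ is $k$-canonical by hypothesis, so part 2 of Lemma~\ref{basechange11} gives that $(P)^c_A = (P)^u_A$ carries the $A$-canonical topology, yielding condition 1. For condition 2, given an open ideal $J \subset A$, I pick $I \in \cP(k)$ with $IA \subset J$; then part 3 of Lemma~\ref{basechange11} identifies $(P)^c_A / \ol{J(P)^c_A}$ with $A/J \otimes_{k/I} P/\ol{IP}$, and flatness of $P/\ol{IP}$ over $k/I$ (from the gauge of $M$) transfers to $A/J$-flatness. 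Condition 3 is the delicate point: I will write $M = \limIND{P \in \cG(M)} P$ in $\cL\cM_k$, with all transitions open embeddings by the gauge definition and Remark~\ref{commens}; commutation of $(-)^c_A$ with colimits in $\cL\cM_k$ (Lemma~\ref{basechange}) then gives $(M)^c_A = \limIND{P} (P)^c_A$ in $\cL\cM_A$, and part 4 of Lemma~\ref{basechange11} combined with the explicit description of colimits of open embeddings in Remark~\ref{openind} shows that each $(P)^c_A$ is an open sub-object of $(M)^c_A$ whose union exhausts it.

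For Part 2, Proposition~\ref{tenspsudcan} already yields closure of $\cP\cB_k$ under $\wt^c_k$ when $k$ satisfies \textbf{OPW}. The unit is $k$ itself, equipped with the gauge $\{k\}$: it is visibly canonical and each $k/I$ is flat over itself. Associator, unitors and the pentagon/triangle coherence axioms are inherited from the monoidal structure of $(\cL\cM_k, \wt^c_k)$, since $\cP\cB_k$ is a full subcategory closed under $\wt^c_k$ and containing the unit. For Part 3, granted Parts 1 and 2 and the additivity of $(-)^c_A$ recorded in Lemma~\ref{basechange}, what remains is to produce natural isomorphisms $(k)^c_A \iso A$ (immediate) and $(M \wt^c_k N)^c_A \iso (M)^c_A \wt^c_A (N)^c_A$ compatible with the coherence data. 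Using the gauges of Proposition~\ref{tenspsudcan}, both sides are colimits indexed by $P \in \cG(M)$, $Q \in \cG(N)$ of $(P \wt^c_k Q)^c_A$ and of $(P)^c_A \wt^c_A (Q)^c_A$ respectively, so it suffices to check the isomorphism on canonical pieces. For canonical $P, Q$, part 1 of Lemma~\ref{basechange11} (using \textbf{OPW} on both $k$ and $A$) collapses $\wt^c$ onto $\wt^u$ on both sides, and the remaining identity $A \wt^u_k (P \wt^u_k Q) \iso (A \wt^u_k P) \wt^u_A (A \wt^u_k Q)$ I will establish by comparing representing functors on $\cL\cM_A$ via the adjunction \eqref{adjc} and the universal property of $\wt^u$.

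The main obstacle I anticipate is condition 3 of Part 1: showing that each $(P)^c_A$ is an \emph{open} sub-object of $(M)^c_A$, not merely the target of a continuous morphism. Part 4 of Lemma~\ref{basechange11} applies only when the ambient module is $k$-canonical, which $M$ generally is not; bypassing this requires expressing $(M)^c_A$ explicitly as the colimit of the open embeddings $(P)^c_A \hookrightarrow (P')^c_A$ for $P \subset P'$ in $\cG(M)$ and then applying Remark~\ref{openind}. Pro-flatness of $A$ over $k$ enters essentially here to secure injectivity and openness at each finite stage, and the commensurability statement of Remark~\ref{commens} is what ensures that the colimit topology so obtained coincides with the a priori topology on $(M)^c_A$ coming from the tensor-product construction \eqref{tenscdef}.
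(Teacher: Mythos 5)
Your proposal is correct and follows essentially the same route as the paper: the same candidate gauge $\{(P)^c_A\}_{P\in\cG(M)}$, conditions 1 and 2 of Definition~\ref{pseudocan} via parts 2 and 3 of Lemma~\ref{basechange11}, and openness of each $(P)^c_A$ in $(M)^c_A$ obtained exactly as in the paper by writing $(M)^c_A$ as the filtered colimit of the open embeddings $(P)^c_A\hookrightarrow (P')^c_A$ (part 4 of Lemma~\ref{basechange11}) using commutation of $(-)^c_A$ with inductive limits. Your treatments of Parts 2 and 3 merely flesh out details (unit object, coherence, base-change compatibility of $\wt^c$) that the paper dispatches by citing Proposition~\ref{tenspsudcan} and declaring the rest clear.
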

\begin{proof} Let $M$ be an object of $\cP\cB_k$ and  $\cG$   be a $k$-gauge  of $M$.   
We want to show that the family  
$$
(\cG)^c_{A} := \{(P)^c_{A} = (P)^u_{A}  \,|\, P \in \cG\,\} 
$$
is an $A$-gauge in $(M)^c_{A}$. 
For any $P \in \cG$, let $i_P : P \to M$ be the open embedding of 
$P$ in $M$.  
 The fact that $(i_P)^c_{A} : (P)^c_{A} \to (M)^c_{A}$ is an open sub-object  follows from the fact 
that if $i_{P,P'} : P \to P'$ is  the open embedding of  
$P$ in $P'$, for $P' \supset P$ in $\cG$,  then  $(i_P)^c_{A} : (P)^c_{A} \to (P')^c_{A}$ is an open embedding as shown in part 
$\mathit 4$ of Lemma~\ref{basechange11}. In fact, as indicated in Lemma~\ref{basechange} and  in part $\mathit 1$ of  Proposition~\ref{indrep},   the functor $(-)^c_{A}$ of \eqref{scalext}  
commutes to inductive limits in $\cL\cM_k$ and  $\cL\cM_A$. So, $(M)^c_{A} = \limIND{P \in \cG}  (P)^c_{A}$ is a filtering inductive limit of open embeddings, hence it is the increasing union of the open sub-objects $ (P)^c_{A}$, for $P \in \cG$. 
Conditions $\mathit 1$ and $\mathit 2$ of  Definition~\ref{pseudocan} follow from  parts $\mathit 2$ and $\mathit 3$ of Lemma~\ref{basechange11}, respectively. 
Condition $\mathit 3$ has already been proven. 
\par Part $\mathit 2$ follows from part $\mathit 1$ of Lemma~\ref{basechange11}, taking into account part $\mathit 1$ of Proposition~\ref{indrep}.
\par Part $\mathit 3$ is clear.
\end{proof}
\end{section}
\begin{section}{Representable functors}

\begin{defn}  \label{indprec0}
We will denote by  $\cI\cR\cR_k$ the full subcategory of  $\cL\cR_k$ whose objects $R = \limind^\cL_\alpha R_\alpha = \limind^u_\alpha R_\alpha$  
are inductive limits in both  $\cL\cR^u_k$ and  $\cL\cR_k$ of the same  inductive system
$\{R_\alpha\}_{\alpha \in A}$ in $\cR\cR_k$.
 \end{defn} 
 \begin{rmk} \label{indprec1}
 $\cI\cR\cR_k$ is in fact a full subcategory of  $\cL\cR^u_k$, and its objects are those  inductive limits in  $\cL\cR_k$  of   inductive systems
$\{R_\alpha\}_{\alpha \in A}$ in $\cR\cR_k$, which happen to be objects of $\cL\cR^u_k$.
\end{rmk}
 
 \begin{lemma} \label{basechIRR}
 Let $\{R_\alpha\}_{\alpha \in A}$ be an inductive system  in $\cR\cR_k$. Assume 
 $$ {\limind}^\cL_\alpha R_\alpha = {\limind}^u_\alpha R_\alpha \;.$$  
 Then for any object  $C$  of $\cR\cR_k$ 
  $$ {\limind}^\cL_\alpha (R_\alpha)^u_C = {\limind}^u_\alpha (R_\alpha)^u_C \;.$$  
 \end{lemma}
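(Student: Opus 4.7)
The plan is to apply the base-change functor $(-)^u_C$ established in Lemma~\ref{basechange} and then verify the two colimits coincide by a universal-property argument. The key inputs are the adjunction for uniform base change and Notation~\ref{uniflim} applied to rings.

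First, I would observe that by Lemma~\ref{basechange}, the functor $(-)^u_C:\cL\cR^u_k\to\cL\cR^u_C$ is left adjoint to the forgetful inclusion $\cL\cR^u_C\hookrightarrow\cL\cR^u_k$, hence preserves colimits. Applying this to the inductive system $\{R_\alpha\}_\alpha$ in $\cR\cR_k\subset\cL\cR^u_k$, I obtain in $\cL\cR^u_C$
$$\bigl({\limind}^u_\alpha R_\alpha\bigr)^u_C \;=\; {\limind}^u_\alpha (R_\alpha)^u_C.$$
Setting $R := {\limind}^\cL_\alpha R_\alpha = {\limind}^u_\alpha R_\alpha$ (which is uniform by hypothesis), this reads $(R)^u_C = {\limind}^u_\alpha (R_\alpha)^u_C$.

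Next, the ring analog of Notation~\ref{uniflim} gives a canonical surjection in $\cL\cR_C$
$$\pi:{\limind}^\cL_\alpha (R_\alpha)^u_C \twoheadrightarrow {\limind}^u_\alpha (R_\alpha)^u_C \;=\; (R)^u_C,$$
and the claim will follow from the construction of an inverse $\sigma:(R)^u_C \to {\limind}^\cL_\alpha (R_\alpha)^u_C$ in $\cL\cR_C$. By the universal property of $R\wt^u_k C$ as representing object of $\Ril^u_k(R\times C, -)$ in $\cL\cR_k$ (Lemma~\ref{tensrep} part 2, combined with compatibility with the $C$-algebra structure via $j_2:C\to R\wt^u_k C$), it is enough to produce a uniformly continuous $k$-bilinear ring map $R\times C\to {\limind}^\cL_\alpha (R_\alpha)^u_C$. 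Composing the natural $\cR\cR_k$-morphisms $R_\alpha\to(R_\alpha)^u_C$ with the colimit maps gives a compatible family $R_\alpha\to {\limind}^\cL_\alpha (R_\alpha)^u_C$ in $\cL\cR_k$, which by the universal property of $R$ as an $\cL\cR_k$-colimit factors through a continuous $k$-algebra homomorphism $R\to{\limind}^\cL_\alpha (R_\alpha)^u_C$. Together with the natural $C\to(R_\alpha)^u_C\to{\limind}^\cL_\alpha (R_\alpha)^u_C$ one produces the desired continuous $k$-bilinear ring map.

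The main obstacle is verifying the uniform continuity of this bilinear map. Using the explicit description of Lemma~\ref{expldirlimrings} part 1, an open ideal $K$ of ${\limind}^\cL_\alpha (R_\alpha)^u_C$ corresponds to a coherent system $(K_\alpha)_\alpha$ of open ideals $K_\alpha\subset(R_\alpha)^u_C=R_\alpha\wt^u_k C$. By Lemma~\ref{tenslemma} part 2 each $K_\alpha$ contains the closure of an ideal of the form $I_\alpha\otimes_k C + R_\alpha\otimes_k H_\alpha$ for open ideals $I_\alpha\subset R_\alpha$ and $H_\alpha\subset C$; this already yields the $V$-part of uniform continuity (one can take $V=H\subset C$ cofinal). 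For the $U$-part, one needs an open ideal $U\subset R$ whose image in ${\limind}^\cL_\alpha (R_\alpha)^u_C$ is contained in $K$. Here the hypothesis ${\limind}^\cL_\alpha R_\alpha={\limind}^u_\alpha R_\alpha$ is essential: by the descriptions of Lemma~\ref{expldirlimrings} parts 2 and 3, the coherent systems of open ideals of $\{R_\alpha\}$ in $\cR\cR_k$ are cofinal with the systems $\{\ol{IR_\alpha}\}_\alpha$ for $I\in\cP(k)$, so after pulling back $(K_\alpha)$ to a coherent system of ideals of $\{R_\alpha\}$ one finds $I\in\cP(k)$ with $\ol{IR_\alpha}$ mapping into $K_\alpha$ for all $\alpha$; then $U=\ol{IR}$ does the job. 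This produces $\sigma$, and since $\pi\circ\sigma$ and $\sigma\circ\pi$ agree with the identity on the dense images of the $R_\alpha$ and on $C$, continuity forces them to be the identity, finishing the proof.
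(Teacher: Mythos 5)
Your argument is correct and, at its crux, identical to the paper's: both proofs come down to showing that a coherent system $(K_\alpha)_\alpha$ of open ideals of the system $(R_\alpha\wt^u_kC)_\alpha$ dominates a system of the form $(\ol{IR_\alpha\otimes_kC+R_\alpha\otimes_kQ})_\alpha$ with a \emph{single} $I\in\cP(k)$ and $Q\in\cP(C)$ --- the $C$-part being extracted from coherence alone via the shape of the basic opens of $\wt^u_k$ (Lemma~\ref{tenslemma}), and the $R_\alpha$-part by pulling $(K_\alpha)_\alpha$ back to a coherent system of open ideals of $(R_\alpha)_\alpha$ and invoking the hypothesis, read through Lemma~\ref{expldirlimrings} (the relevant parts for that cofinality statement are $\mathit 1$ and $\mathit 2$, not $\mathit 2$ and $\mathit 3$). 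Where you diverge is only in the packaging: the paper feeds this cofinality directly into the two projective-limit presentations of ${\limind}^\cL_\alpha(R_\alpha)^u_C$ and ${\limind}^u_\alpha(R_\alpha)^u_C$ to conclude that the $\cL$-colimit is already $C$-uniform, whereas you first identify ${\limind}^u_\alpha(R_\alpha)^u_C$ with $(R)^u_C$ via the adjunction of Lemma~\ref{basechange}, then manufacture an explicit section $\sigma$ of the canonical surjection $\pi$ through the representing property of $R\wt^u_kC$ for $\Ril^u_k(R\times C,-)$, and finish by a density argument. This detour is harmless but buys nothing over the paper's direct comparison; note also that once one grants (as both proofs implicitly do, via Lemma~\ref{expldirlimrings}.$\mathit 1$) that the opens arising from coherent systems of open ideals are cofinal in the topology of the $\cL$-colimit, your verification of uniform continuity and the paper's identification of the two limits are carrying exactly the same load, so neither route is more complete than the other on that point.
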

 \begin{proof}
 We need to show that ${\limind}^\cL_\alpha (R_\alpha)^u_C$ is $C$-uniform. 
By assumption, we have an isomorphism 
 $$
 \limPRO{\sJ \in \sC(\{R_\alpha\}_{\alpha \in A})} {\limIND\alpha} R_\alpha/J_\alpha 
\iso  \limPRO{I \in \cP(k)} {\limIND\alpha} R_\alpha/\ol{I R_\alpha} 
 $$
 in $\cL\cR^u_k$.  So, for any coherent system of open ideals $\sJ = (J_\alpha)_\alpha$ 
 there exists an open ideal $I \in \cP(k)$ such that $J_\alpha  \supset I R_\alpha$, for any $\alpha \in A$. 
\par
Let now $\sH = (H_\alpha)_\alpha$ be a coherent system of open ideals in the inductive system 
$(R_\alpha \wt^u_k C)_{\alpha \in A}$ in $\cR\cR_C$. 
 We need to prove that the canonical map provides an isomorphism
\beq \label{isolim} \begin{split}
& \limPRO{\sH \in \sC((R_\alpha \wt^u_k C)_{\alpha \in A})} {\limIND\alpha} (R_\alpha \wt^u_k C)/H_\alpha 
\iso \\ \limPRO{J \in \cP(C)} {\limIND\alpha} & (R_\alpha \wt^u_k C/J )=  \limPRO{J \in \cP(C)} {\limIND\alpha} ((R_\alpha/\ol{(J \cap k) R_\alpha} ) \otimes_{k/(J \cap k)} C/J )  \;.
\end{split}
\eeq 
Again \eqref{isolim} will hold if for any $\sH = (H_\alpha)_\alpha$ as before, there exists an open ideal $J \in \cP(C)$ such that 
$H_\alpha \supset R_\alpha \wt^u_k J$, for any $\alpha \in A$. Equivalently, we may replace $H_\alpha$ by its inverse image in $R_\alpha \otimes_k C \to R_\alpha \wt^u_k C$. We 
keep the name $H_\alpha$  for that inverse image. 
Now, for any $\alpha \in A$,  
$$
R_\alpha \wt^u_k C = \limPRO{J \in \cP(R_\alpha), Q\in \cP(C)} R_\alpha/J  \otimes_k  C/Q
$$
so that we may assume that  $H_\alpha$ has the form
$$
H_\alpha = J_\alpha \otimes C + R_\alpha \otimes_k Q_\alpha
$$
for some $J_\alpha \in \cP(R_\alpha)$ and $Q_\alpha \in \cP(C)$. The  coherence condition becomes 
$$
(j_{\alpha,\beta} \otimes_k C)^{-1} (J_\beta \otimes C + R_\beta \otimes_k Q_\beta ) =  J_\alpha \otimes C + R_\alpha \otimes_k Q_\alpha 
$$
for any $\alpha \leq \beta$. But $j_{\alpha,\beta} \otimes_k C$ is the continuous $C$-linear extension of $j_{\alpha,\beta} : R_\alpha \to R_\beta$, so that 
$$
(j_{\alpha,\beta} \otimes_k C)^{-1} (J_\beta \otimes C + R_\beta \otimes_k Q_\beta ) =  j_{\alpha,\beta}^{-1}(J_\alpha) \wt^u_k C +  R_\alpha  \wt^u_k Q_\beta \;.
$$
This shows that there is an ideal $Q \in \cP(C)$ such that $H_\alpha \supset R_\alpha  \otimes_k Q $, for any $\alpha$. On the other hand, 
$\sJ := (J_\alpha)_\alpha$ is a coherent system of open ideals in $(R_\alpha)_\alpha$, so that, as we saw before,  there exists an open ideal $I \in \cP(k)$  such that $I R_\alpha \subset J_\alpha$ for any $\alpha$. 
We conclude that for any coherent system of open ideals $\sH = (H_\alpha)_\alpha$ as before, 
$$H_\alpha \supset  I R_\alpha \otimes_k C +  R_\alpha \otimes_k  Q \;.
$$
  \end{proof}
An object $\sL_k = {\limIND{\alpha \in A}}^\cL R_\alpha$ of $\cI\cR\cR_k$ determines the functor 
 $F : \cL\cR_k \to \cM od_k$ given by  
\beq
\label{reprdef}
X \mapsto  \hom_{\cL\cR_k} ({\limIND{\alpha \in A}} R_\alpha, X) = \limPRO{\alpha \in A} \hom_{\cL\cR_k}(R_\alpha, X) \;.
\eeq  
 We say that  $\sL_k$ \emph{represents} the functor $F$. 
Recall that  $\{(R_\alpha)_{\alpha \in A}, (\iota_{\alpha,\beta}:R_\alpha \to R_\beta)_{\alpha \leq \beta}\}$ is an inductive system in $\cR\cR_k$ and that 
$\cL_k$ is an object of $\cL\cR_k^u$. By Yoneda's lemma,  $\sL_k$ is determined by the restriction of the functor $F$ to $\cL\cR_k^u$.
\begin{prop} \label{scalrestr} Let    $F : \cL\cR_k \to \cM od_k$ be defined by \eqref{reprdef} as above. Then, 
for any object $C$ of $\cR\cR_k$ the restriction of $F$ to  $\cL\cR^u_C$  is  represented by the object 
$$ \sL_C :=
{\limIND{\alpha \in A}}^\cL (C \wt^u_k R_\alpha) = {\limIND{\alpha \in A}}^u (C \wt^u_k R_\alpha) 
$$
of $\cI\cR\cR_C$. 
\end{prop}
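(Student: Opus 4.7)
The plan is to assemble the identification $F(X)=\hom_{\cL\cR^u_C}(\sL_C,X)$ for $X$ in $\cL\cR^u_C$ as a short chain of canonical isomorphisms, each justified by a result already proved in the paper.

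First I would verify that $\sL_C$ is a well-defined object of $\cI\cR\cR_C$. Since $(R_\alpha)^u_C = C\wt^u_k R_\alpha$ lies in $\cR\cR_C$ by Lemma~\ref{basechange} (part~$\mathit 2$, \eqref{scalextu3}), and the $\sL_k$ coming from $\cI\cR\cR_k$ satisfies ${\limind}^\cL_\alpha R_\alpha={\limind}^u_\alpha R_\alpha$ by Definition~\ref{indprec0}, Lemma~\ref{basechIRR} applies and gives the required coincidence ${\limind}^\cL_\alpha(C\wt^u_k R_\alpha)={\limind}^u_\alpha(C\wt^u_k R_\alpha)$, so $\sL_C\in\cI\cR\cR_C$.

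Next, given $X$ in $\cL\cR^u_C$, I would compute $\hom_{\cL\cR^u_C}(\sL_C,X)$ by invoking the universal property of ${\limind}^\cL_\alpha$ in $\cL\cR_C$ (equivalently, in $\cL\cR^u_C$ since $\sL_C$ is uniform), which gives
\beqa
\hom_{\cL\cR^u_C}(\sL_C,X)=\limPRO{\alpha}\hom_{\cL\cR^u_C}(C\wt^u_k R_\alpha,X).
\eeqa
Then I would apply the adjunction property established in Lemma~\ref{basechange}, part~$\mathit 2$ (the adjunction between $(-)^u_C:\cL\cR^u_k\to\cL\cR^u_C$ and the scalar-restriction inclusion $\cL\cR^u_C\hookrightarrow\cL\cR^u_k$). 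Since $R_\alpha\in\cR\cR_k\subset\cL\cR^u_k$ and $X$, regarded via $k\to C$, lies in $\cL\cR^u_k$, this yields
\beqa
\hom_{\cL\cR^u_C}(C\wt^u_k R_\alpha,X)=\hom_{\cL\cR^u_k}(R_\alpha,X)=\hom_{\cL\cR_k}(R_\alpha,X),
\eeqa
the last equality holding because $\cL\cR^u_k$ is a \emph{full} subcategory of $\cL\cR_k$ containing both $R_\alpha$ and $X$. Finally, using once again the universal property of ${\limind}^\cL_\alpha R_\alpha=\sL_k$ in $\cL\cR_k$, I get
\beqa
\limPRO{\alpha}\hom_{\cL\cR_k}(R_\alpha,X)=\hom_{\cL\cR_k}(\sL_k,X)=F(X),
\eeqa
which is the desired representability.

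The only delicate step is the scalar-restriction compatibility in the middle: one must check that $X$ in $\cL\cR^u_C$, regarded as a $k$-ring via $k\to C$, is still an object of $\cL\cR^u_k$, \ie that the $k$-scalar product on $X$ remains uniformly continuous. This is however automatic: uniform continuity of $C\times X\to X$ on the product uniformity, together with uniform continuity of $k\to C$ (the structure morphism in $\cR\cR_k$ is uniformly continuous because $C$ is linearly topologized), implies uniform continuity of the composite $k\times X\to C\times X\to X$. All the other steps are formal consequences of results in the excerpt, so I expect no real obstacle beyond assembling the diagram.
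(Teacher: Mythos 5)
Your proposal is correct and follows essentially the same route as the paper's own proof: the identification is the same chain of canonical isomorphisms (universal property of ${\limIND{\alpha}}^\cL$, the adjunction of $(-)^u_C$ with restriction of scalars from Lemma~\ref{basechange}, and fullness of $\cL\cR^u_k$ in $\cL\cR_k$), merely read in the opposite direction. The two points you spell out explicitly --- that $\sL_C$ lands in $\cI\cR\cR_C$ via Lemma~\ref{basechIRR}, and that restriction of scalars along $k\to C$ preserves uniformity --- are left implicit in the paper but are exactly the right checks to make.
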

\begin{proof}
In fact, any object $X$ of $\cL\cR^u_C$ may be viewed as an object of $\cL\cR^u_k$. 
We have
\beqa \begin{split}
F(X) =&   \hom_{\cL\cR_k} ({\limIND{\alpha \in A}}^\cL R_\alpha, X) = \limPRO{\alpha \in A} \hom_{\cL\cR_k}(R_\alpha, X) =  \\
&\limPRO{\alpha \in A} \hom_{\cL\cR_C}(C \wt^u_k R_\alpha, X) = \hom_{\cL\cR_C} ({\limIND{\alpha \in A}}^\cL C \wt^u_k R_\alpha, X) \;.
\end{split}
\eeqa
\end{proof}
\begin{rmk} \label{scalrestr1}
This shows that, for any $C$ in $\cR\cR_k$,  the functor 
\beq \label{indtens}
\begin{split}
(-)^\ind_C : \cI\cR\cR_k &\longrightarrow \cI\cR\cR_C \\
{\limIND{\alpha \in A}}^\cL R_\alpha & \longmapsto {\limIND{\alpha \in A}}^\cL (R_\alpha)^u_{/C}
\end{split}
\eeq
is well defined. 
\end{rmk}

\begin{cor} \label{banach2} We assume here that $K$ is a non-archimedean field and that $k = K^\circ$. We have
\ben
\item Let $M$ and $N$ be pseudobanach $K$-spaces with gauges $\cG$ and $\cH$, respectively, then $M \otimes^c_k N$ is a pseudobanach $K$-space with gauge $\cG \otimes \cH$ consisting of the 
family of the open sub-objects $P \wt^u_k Q$, for $P \in \cG$ and $Q \in \cH$.   
 \item If $K$ is non-trivially valued the category $\cP\cB_k$ is equivalent to the category $\cB an_K$ of $K$-Banach spaces and continuous $K$-linear maps.  The
 tensor product $M \wt^c_k N$ corresponds to both  the (separated complete) projective and  injective tensor products $ M \otimes_{K,\pi}  N =  M \otimes_{K,\iota}  N$ of \cite[\S 17 B]{schneider}.
 \item If $K$ is trivially valued so that $K^\circ = K$, $\cP\cB_K$ is the full subcategory of $\cL\cM_K$ consisting of $K$-vector spaces equipped with the discrete topology and $-\wt_K^c - =   - \otimes_K -$.
\een
\end{cor}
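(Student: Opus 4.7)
The plan is to deduce all three parts from results already established in the paper, after verifying the hypotheses in each case. The main observation is that for $k = K^\circ$ the ring $k$ satisfies condition \textbf{OPW}: in the non-trivially valued case every open ideal of $K^\circ$ has the form $aK^\circ$ for some nonzero $a \in K^{\circ\circ}$, and the product $(aK^\circ)(bK^\circ) = ab\,K^\circ$ is again open; in the trivially valued case $K^\circ = K$ carries the trivial topology, so \textbf{OPW} is vacuous. Hence Proposition~\ref{tenspsudcan} is applicable.

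For part $\mathit 1$, I would argue as follows. Since $M$ and $N$ are pseudobanach $K$-spaces, they are in particular pseudobanach $k$-modules with gauges $\cG$ and $\cH$. By Proposition~\ref{tenspsudcan} (applicable because \textbf{OPW} holds) the object $M \wt^c_k N$ is pseudobanach with gauge $\{P \wt^c_k Q = P \wt^u_k Q\}_{P \in \cG, Q \in \cH}$. It remains to show that $M \wt^c_k N$ is a $K$-vector space, equivalently (in view of Remark~\ref{TVS}) that multiplication by any nonzero $a \in k$ is bijective on $M \wt^c_k N$. But $M \wt^c_k N$ is the filtered colimit of the $P \wt^u_k Q$, and on each such $P \wt^u_k Q$ multiplication by $a$ is bijective because the factors $P$ and $Q$ are open sub-objects of $K$-vector spaces in which $a$ acts bijectively (passing to the tensor product of the quotients $P/\ol{IP}$ and $Q/\ol{IQ}$ at each level of the canonical topology preserves this property since each $P/\ol{IP}$ is flat over $k/I$).

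For part $\mathit 2$, the plan is to exhibit the two functors implementing the equivalence. In one direction, a $K$-Banach space $(V,\|\cdot\|)$ gives rise to an object of $\cP\cB_k$: take the system of open lattices $L_r := \{x \in V : \|x\| \le r\}$, $r \in |K^\times|$, as a gauge, each $L_r$ being endowed with the canonical topology coming from the sub-basis $\{a L_r\}_{a \in K^{\circ\circ}}$; condition $\mathit 2$ of Definition~\ref{pseudocan} is automatic because each $L_r/\ol{a L_r}$ is a torsion-free hence flat $k/ak$-module, using that $k$ is a valuation ring. In the other direction, a pseudobanach $K$-space $M$ with gauge $\cG$ produces a norm as the gauge seminorm $\|x\| := \inf\{|a|^{-1} : x \in a^{-1}P,\ P \in \cG,\ a \in K^\times\}$; the $K$-vector space condition guarantees this is a norm, and the hypotheses on $\cG$ provide completeness. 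The two constructions are mutually inverse up to equivalence of norms, and morphisms correspond on both sides to continuous $K$-linear maps. The identification $\wt^c_k = \otimes_{K,\pi} = \otimes_{K,\iota}$ then follows by comparing the description in part $\mathit 1$, namely $\limind_{P,Q} P \wt^u_k Q$, with Schneider's construction \cite[\S 17 B]{schneider} of $\otimes_{K,\pi}$ via the same lattices. The hard part here is the care needed to check that the two processes (gauge $\leftrightarrow$ norm) really are inverse equivalences of categories and in particular that the topology reconstructed from the norm matches the original gauge topology up to equivalence.

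For part $\mathit 3$, when $K$ is trivially valued we have $k = K^\circ = K$ with the discrete topology, so by Remark~\ref{psudodiscr} the category $\cP\cB_K$ consists of flat $K$-modules with the discrete topology; since $K$ is a field, ``flat'' is automatic and we recover the full subcategory of discrete $K$-vector spaces, with $\wt^c_K = \otimes_K$ by the same remark.
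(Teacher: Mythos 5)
The paper offers no proof of this corollary: it is presented as an immediate consequence of Proposition~\ref{tenspsudcan}, of the (equally unproved) corollary following Definition~\ref{pseudobanach}, and of Remark~\ref{psudodiscr}. Your overall decomposition --- verify \textbf{OPW} for $K^\circ$, invoke Proposition~\ref{tenspsudcan} to get the gauge $\{P\wt^u_kQ\}$ on $M\wt^c_kN$, then supply the $K$-vector-space structure, and dispose of the trivially valued case via Remark~\ref{psudodiscr} --- is exactly the intended route, and parts $\mathit 2$ and $\mathit 3$ are handled in the way the paper clearly has in mind (the lattice/gauge-seminorm dictionary announced in the introduction).

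Two local claims are nevertheless wrong as stated. First, open ideals of $K^\circ$ need not be principal when the value group is dense (e.g.\ $\{x\,:\,v(x)>c\}$); this is harmless, since every open ideal contains a principal open ideal $aK^\circ$, so products of open ideals are still open --- or one can simply quote Remarks~\ref{nonOP} and~\ref{OP}, which give \textbf{OP}, hence \textbf{OPW}, for any rank-one valuation ring. Second, and more substantively, multiplication by a nonzero $a\in K^{\circ\circ}$ is \emph{not} bijective on an individual $P\wt^u_kQ$: already for $P=Q=K^\circ$ its image is $aK^\circ\subsetneq K^\circ$, so the sentence ``on each such $P\wt^u_kQ$ multiplication by $a$ is bijective'' would fail. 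Bijectivity only holds on the colimit $\limind_{P,Q}P\wt^u_kQ$. The repair is routine: $a\cdot$ maps $P\wt^u_kQ$ isomorphically onto $(aP)\wt^u_kQ$ (injectivity follows from the flatness of the $P/\ol{IP}$, as you note), and $a^{-1}P$ is again an open lattice of $M$, commensurable with the members of $\cG$ in the sense of Remark~\ref{commens}; hence enlarging $\cG\times\cH$ by such lattices does not change the colimit, and every element of $P\wt^u_kQ$ is $a$ times an element of $(a^{-1}P)\wt^u_kQ$ there. With this correction the argument is complete.
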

\end{section} 
 \begin{section}{Rings of $\fpm$-type} \label{pmtype}
We recall that a subset 
$T$ of a topological ring $R$ is \emph{bounded} if, for any neighborhood $U$ of $0$ in $R$, there exists a neighborhood $V$ of $0$ in $R$ such that
$V\,T \subset U$.  An element $x \in R$ is \emph{power bounded} if the set $T_x = \{1,x,x^2,\dots\}$ is bounded. 
If $R$ is a $k$-ring and the topology of $R$ is $k$-linear, then $T$ is bounded if and only if 
the $k$-sub-module of $R$ generated by $T$ is bounded. Under the same assumptions, an element $x \in R$ is power bounded 
if and only if the $k$-sub-ring $k[x]$ of $R$ is bounded.
For an object $R$ of  $\cL\cR_{k}$ we denote 
by $R^\circ$ the subset of $R$ consisting of power bounded elements. 
\begin{lemma}
For any object $R$ of  $\cL\cR_{k}$, $R^\circ$ is a subring of $R$. 
\end{lemma}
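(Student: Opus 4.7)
The plan is to reduce everything to the characterization already recorded in the paragraph preceding the lemma: a subset $T\subset R$ is bounded if and only if the $k$-submodule $\langle T\rangle_k$ it generates is bounded, and an element $x\in R$ is power-bounded if and only if the $k$-subring $k[x]$ is bounded. Thus
$$R^\circ = \{\, x\in R \,|\, k[x]\text{ is a bounded }k\text{-submodule of }R\,\}.$$
The task is then to show that this set is closed under negation, addition, multiplication, and contains $0$ and $1$.

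That $0,1\in R^\circ$ is immediate: $\{1\}$ is bounded, since for any open $k$-submodule $U$ of $R$ we can take $V=U$ and get $V\cdot 1 \subset U$. Closure under negation is also trivial, since $k[-x]=k[x]$.

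The central technical step is the following stability property, which I would isolate first: if $S$ and $T$ are bounded $k$-submodules of $R$, then the $k$-submodule $S\cdot T$ generated by all products $st$ is again bounded. Given an open $k$-submodule $U$ of $R$, boundedness of $T$ yields an open $k$-submodule $V_1$ with $V_1\,T\subset U$; boundedness of $S$ then yields an open $k$-submodule $V_2$ with $V_2\,S\subset V_1$; and then $V_2\cdot(S\cdot T)=(V_2\,S)\cdot T\subset V_1\,T\subset U$. The only point worth checking carefully is that because $U$ is a $k$-submodule, this containment really passes from elementary products $v_2 s t$ to the submodule they generate, so that the estimate holds on the full $S\cdot T$. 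I expect this to be the main obstacle, but it is really just bookkeeping.

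With this in hand, for $x,y\in R^\circ$ the $k$-submodule $k[x]\cdot k[y]$ is bounded; since $R$ is commutative and $1$ lies in both factors, it coincides with $k[x,y]$. Now $x+y$ and $xy$ are elements of $k[x,y]$, so $k[x+y]\subset k[x,y]$ and $k[xy]\subset k[x,y]$ are bounded as $k$-submodules of a bounded $k$-submodule. (For the sum one uses the binomial expansion $(x+y)^n=\sum_{j=0}^{n}\binom{n}{j}x^{j}y^{n-j}$, noting that the integer coefficients are absorbed by the $k$-module structure of open submodules.) Applying again the equivalence ``bounded $k$-submodule $\Leftrightarrow$ bounded set generating it'', we conclude $x+y,\ xy\in R^\circ$, which completes the verification that $R^\circ$ is a subring of $R$.
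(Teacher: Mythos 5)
Your proposal is correct and follows essentially the same route as the paper: the two-step nesting of open $k$-submodules to show that $k[x]\,k[y]$ is bounded, followed by the observation that the powers of $x+y$ and of $xy$ lie in the $k$-submodule it generates. Your explicit treatment of $0$, $1$, negation, and the passage from elementary products to the generated submodule only spells out details the paper leaves implicit.
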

\begin{proof}
Let $\cP(R)$ be a fundamental system of open $k$-submodules of $R$, and let $x ,y \in R^\circ$. 
We show that both $T_{x+y}$ and $T_{xy}$ are bounded. 
For any $U \in \cP(R)$, let $V  \in \cP(R)$ be such that   $V  k[y] \subset U$. Let then $W   \in \cP(R)$  be such that $W k[x] \subset V$. Then 
$W k[x]  k[y] \subset U$. So, $k[x]  k[y]$ is bounded. The $k$-sub-module of $R$ generated by $k[x]  k[y]$ contains both 
$T_{x+y}$ and $T_{xy}$.  
\end{proof}
\begin{defn} 
We say that an object $R$ of  $\cL\cR_{k}$ is \emph{of $\fpm$-type}  if $R^\circ$ is an open  subring of $R$ and its subspace topology 
is $R^\circ$-linear. 
\end{defn}
\begin{rmk}
Notice that,  if
  $R$ is of $\fpm$-type, then
 $R^\circ$ is an  object of $\cR\cR_{k}$ and it is the (unique) maximal open subring $A$ of $R$ such that the subspace topology of $A$  is $A$-linear.  
 \end{rmk}
 \begin{lemma} Let   $R$ be an object of  $\cL\cR_{k}$. 
 For any open 
 subring $A$ of $R$ the condition that the subspace topology of $A$  be $A$-linear
 is equivalent to $A$ being bounded. 
 \end{lemma}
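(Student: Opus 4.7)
The statement is an equivalence of two unpackings of definitions, so the plan is to argue each direction by directly constructing the object demanded by the other side. The key structural fact I will exploit throughout is that $A$ is \emph{open} in $R$, so that any set open in $R$ is open in $A$ for the subspace topology and, more importantly, any subset of $A$ that is open in the subspace topology is automatically open in $R$.

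For the implication ``$A$-linear $\Rightarrow$ bounded,'' I plan to start with an arbitrary open neighborhood $U$ of $0$ in $R$, take the intersection $U\cap A$ (which is open in $A$ for the subspace topology), and use the $A$-linearity hypothesis to find an open $A$-submodule $V\subset U\cap A$. Since $A$ is open in $R$, this $V$ is also an open neighborhood of $0$ in $R$, and by construction $V\cdot A\subset V\subset U$. This is exactly the boundedness condition.

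For the converse, given an open neighborhood $U$ of $0$ in $R$, boundedness of $A$ produces an open neighborhood $V$ of $0$ in $R$ with $V\cdot A\subset U$. I would then set $W:=(V\cap A)\cdot A$: this is an $A$-submodule of $A$, it contains the open set $V\cap A$ (so is open for the subspace topology), and it sits inside $V\cdot A\subset U$. Varying $U$, these $W$ furnish a fundamental system of neighborhoods of $0$ in $A$ consisting of $A$-submodules. Continuity of the $A$-module structure on $A$ (\emph{i.e.}\, of the restriction of $\mu_R$) is inherited from the fact that $R$ is a topological $k$-ring, so the subspace topology is genuinely $A$-linear in the sense of the earlier definition.

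I do not expect any serious obstacle: the only point requiring care is to keep track of which ambient space each submodule is open in, using the openness of $A$ in $R$ to pass back and forth between the two topologies. Thus the argument is essentially a one-line construction in each direction.
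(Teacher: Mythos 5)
Your proof is correct and follows essentially the same route as the paper: one direction produces an open ideal $J\subset U\cap A$ from $A$-linearity and uses $JA\subset J\subset U$ together with openness of $A$ in $R$, and the converse direction takes the ideal generated by $V\cap A$ (the paper uses $VA$ directly), which is open because it contains $V\cap A$ and lands in $U$ by boundedness. The only difference is your extra (harmless and in fact welcome) care in distinguishing the subspace topology of $A$ from the ambient topology of $R$.
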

 \begin{proof}
 In fact, if the subspace topology on  $A$ is $A$-linear and $U$ is any neighborhood  of $0$ in $R$, then 
 there exists an open ideal $J$ of $A$ such that $J \subset U \cap A$, hence $J\,A \subset U$. Conversely, if $A$ is bounded and $U$ is any neighborhood  of $0$ in $A$, 
 there exists a neighborhood $V$ of $0$ in $A$ such that
$V\,A \subset U$. But then $V\,A$ is an ideal of $A$ and a fundamental system of open $k$-submodules of $A$ may be assumed to consist of open ideals of $A$. 
\end{proof}
We conclude 
\begin{cor}
An object $R$ of $\cL\cR_k$ is of $\fpm$-type if and only if $R^\circ$ is an open bounded $k$-subring of $R$.
\end{cor}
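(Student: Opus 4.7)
The plan is essentially to observe that this Corollary is an immediate combination of the definition of $\fpm$-type and the preceding lemma, applied to the specific open subring $A = R^\circ$.

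First I would unfold the definition: $R$ being of $\fpm$-type means precisely that $R^\circ$ is an open subring of $R$ and that the subspace topology on $R^\circ$ is $R^\circ$-linear. Then I would invoke the preceding lemma with $A = R^\circ$: since $R^\circ$ is (assumed to be) an open subring, its subspace topology is $R^\circ$-linear if and only if $R^\circ$ is bounded in $R$. Substituting this equivalence into the definition yields the claim.

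For the forward direction, if $R$ is of $\fpm$-type, then by definition $R^\circ$ is open with $R^\circ$-linear subspace topology, hence by the lemma it is bounded, so $R^\circ$ is an open bounded $k$-subring. Conversely, if $R^\circ$ is an open bounded $k$-subring, then the lemma (applied to the open subring $A = R^\circ$) implies that the subspace topology on $R^\circ$ is $R^\circ$-linear, which is exactly the condition in the definition of $\fpm$-type. The only subtle points are bookkeeping: one should note that $R^\circ$ is automatically a $k$-subring (since $k \subset R^\circ$, as $k$ is complete linearly topologized and its image in $R$ lies in $R^\circ$) and that the lemma applies precisely because $R^\circ$ is being assumed open.

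There is no real obstacle here; the proof is a one-line substitution. I expect the write-up to consist essentially of the sentence ``This follows immediately from the preceding lemma applied to $A = R^\circ$, combined with the definition of $\fpm$-type.''
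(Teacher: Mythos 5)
Your proposal is correct and matches the paper exactly: the corollary is stated as an immediate consequence ("We conclude") of the preceding lemma applied to the open subring $A = R^\circ$, combined with the definition of $\fpm$-type, which is precisely your argument.
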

\end{section}
\begin{section}{Multivalued rings}  \label{multival}
We now specialize the definitions of the previous sections to the classical case of topologies defined by a family of semivaluations. 
\par
We assume in this section  that $K = (K,v)$ is a non-archimedean field and that $k = K^\circ$.
\par \smallskip
\begin{defn} \label{valdef}
A \emph{semivaluation} on a ring $R$ is a map $w:R \to \R  \cup \{+\infty\}$ such that 
$$w(0) = +\infty \;\;,\;\; w(x+y) \geq \min(v(x),v(y)) \;\;,\;\; w(xy) \geq w(x) + w(y) \;,
$$
 for any $x,y \in R$; $w$ is \emph{separated} (resp.  \emph{power-multiplicative}, resp. \emph{multiplicative}) if $w(x) \neq +\infty$ for $x \neq 0$ (resp. $w(x^n) = n w(x)$, 
 for any $x \in R$ and $n \in \Z$, resp.  $w(xy) = w(x) + w(y)$, for any $x,y \in R$).  A  multiplicative semivaluation is called 
a \emph{pseudovaluation}, and  a \emph{valuation} if moreover it is separated.  
A
\emph{$k$-Banach  ring} is an object of $\cL\cR_k$ 
whose topology is induced by a single semivaluation $w$ extending the valuation $v$. Then $w$ is necessarily separated. A morphism of 
$k$-Banach  rings is a continuous $k$-algebra morphism, \ie a morphism in the category $\cL\cR_k$. 
\end{defn} 
 \par
We denote by $w^{\sp}$ the \emph{spectral valuation} associated to $w$, that is 
\beq \label{specdef}
w^\sp(f) = \lim_n \frac{1}{n} w(f^n) = \sup_n \frac{1}{n} w(f^n) \;.
\eeq
Then $w^\sp$ is in fact a power multiplicative semivaluation on $R$ and
$$
R^\circ = \{x \in R \,|\, w^\sp(x) \geq 0\,\} \;.
$$
So, a  $k$-Banach ring $R$ for the semivaluation $w$ is of $\fpm$-type if and only if 
$w$ and $w^\sp$ induce the same topology on $R$, or, equivalently, if its topology can be defined by 
a power-multiplicative semivaluation $w=w^\sp$.  
If $(R_1,w_1), \dots , (R_N,w_N)$ are   $k$-Banach rings, so is 
$$R := (R_1,w_1) \wt^c_k \dots  \wt^c_k  (R_N,w_N) \;.
$$
The proof is essentially identical to the one of \cite[Lemma 17.2]{schneider}.
More precisely,  $R$ is the separated completion of $R_1 \otimes_k \dots \otimes_k R_N$ in the \emph{product semivaluation} $w$ defined, for any 
$y \in   R_1 \otimes_k \dots \otimes_k R_N$
  by 
\beq \label{prdoval}
w(y) = \sup \min_{j=1,\dots,M} w_1(y_{j,1}) +\dots +w_N(y_{j,N})
\eeq
where the supremum is taken over all representations 
$$
y = \sum_{j=1}^M y_{j,1} \otimes \dots \otimes y_{j,N}
$$
with $y_{j,i} \in R_i$, for $i=1,\dots,N$. 
\par

A \emph{multivalued $k$-ring} is an  
  object $R$ of $\cL\cR_k$ whose topology is induced by a family of  semivaluations $\{w_r\}_{r \in T}$. 
   We write $R = (R, \{w_r\}_{r \in T})$.
 We are especially interested in the case when $k =K^\circ$, for a non-archimedean field $(K,v_K)$ (possibly trivially valued) and the  semivaluations  $w_r$, for $r \in T$, induce $v= v_K$ on $k$.

 \begin{rmk}\label{gauss} let $(K,v_K)$ be a non-trivially valued non-archimedean field. Then a commutative $K$-Banach algebra in the classical sense is a $K^\circ$-Banach ring  $(A,w)$ for which the scalar product 
 $K^\circ \times A \to A$  extends to a structure of  $K$-vector space on $A$. For any commutative $K$-Banach algebra $A$, one defines classically a subring  
 $A\{T\}$ of $A[[T]]$ as the subset of power series $\sum_{i=0}^{+\infty} a_i T^i$ such that 
 $\lim_{i \to +\infty} a_i = 0$. Then  $A\{T\}$ equipped with the semivaluation $w_T$ such that
  $$
 w_T(\sum_{i=0}^{+\infty} a_i T^i ) = \inf_{i \geq 0} w(a_i) \;,
 $$
 is a $K$-Banach algebra.  The semivaluation $w_T$ is called the \emph{Gauss valuation}. 
 This operation, and terminology, can be iterated in more variables.  It is the construction used in \cite[2.1.7]{Berkovich}.
 \end{rmk}
\end{section}

\begin{section}{Pseudobanach algebras} \label{upcalgebras} 
 \begin{defn}   \label{banringdef}
A \emph{pseudobanach} $k$-ring is any object $A$ of $\cL\cR_k \cap \cP\cB_k$ which admits a gauge $\cG$ such that one element $R$ of $\cG$ is a $k$-subring of $A$. 
A pseudobanach $k$-ring is \emph{of $\fpm$-type} if it is of $\fpm$-type as an object of $\cL\cR_k$.
We denote by $\cP\cB\cA_k$ (resp. $\cU\cP\cB\cA_k$)  the full subcategory of $\cL\cR_k$ whose objects are  pseudobanach  $k$-rings (resp. of $\fpm$-type). 
\end{defn} 
\begin{rmk}   \label{banringdef0}  Let $A$  be a  pseudobanach $k$-ring. 
Let $\cG(A)$ be a  
gauge  of $A$ such that $R \in \cG(A)$ is a subring of $A$.  
The  subring $A^\circ$  of $A$ is open since it contains $R$.
So, \emph{a pseudobanach $k$-ring $A$ is of $\fpm$-type if and only if $A^\circ$ is bounded}. In that case, we may assume that $A^\circ \in \cG$. Conversely, if 
$A^\circ \in \cG$, $A$ is of $\fpm$-type. 
\end{rmk} 

\begin{rmk}   \label{banringdef1} 
For $A,B$ in 
 $\cP\cB\cA_k$ the object $A \wt^c_k B$ of  $\cP\cB_k$  is canonically equipped with a structure of a   pseudobanach   $k$-ring    via the structure of $\mathit 1$ of 
 Lemma~\ref{tensrep}
\end{rmk}
Let $A$ be an object of $\cP\cB\cA_k$ with gauge $\cG(A)$ where  $R \in \cG(A)$ is a subring of $A$. We observe that a power series 
 $$\sum_{\ul{u} \in \Z_{\geq 0}^n} a_{\ul{u}} \ul{T}^{\ul{u}} \in A[[\ul{T}]] = A[[T_1,\dots,T_n]] 
 $$
 is restricted \cite[Chap. III, \S 4, n. 2, Def. 2 p. 253]{algebracomm} \ie is such that for any $P \subset R$, $P \in \cG(A)$,  $a_{\ul{u}} \in P$ for almost all $\ul{u} \in \Z_{\geq 0}^n$,  if and only if  $a_{\ul{u}}  \to 0$ as $|\ul{u}| := \sum_{i=1}^n u_i \to +\infty$. It follows from the definition of a $k$-gauge  that the product of two restricted power series in $A[[\ul{T}]]$ is restricted. 
We denote by $A\{\ul{T}\}$ the subring of $A[[\ul{T}]]$ consisting of restricted power series. 
 The  family of $k$-submodules 
of $A\{\ul{T}\}$
\beq
U\{\ul{T}\} = \{ \sum_{\ul{u} \in \Z_{\geq 0}^n} a_{\ul{u}} \ul{T}^{\ul{u}} \in A\{\ul{T}\}\,|\, a_{\ul{u}} \in U \,,\, \forall \ul{u} \in  \Z_{\geq 0}^n\,\}
\eeq
for $U \in \cG(A)$, is a $k$-gauge in $A\{\ul{T}\}$. Obviously $R\{\ul{T}\} \in \cG(A\{\ul{T}\})$ is a subring of $A\{\ul{T}\}$. It then follows that $A\{\ul{T}\}$ is an object of 
 $\cP\cB\cA_k$
 \begin{defn}\label{pseudoaffinoid} Let $A$ be an object of $\cP\cB\cA_k$ with gauge $\cG(A)$ where  $R \in \cG(A)$ is a ring. Then we regard the
 ring of restricted power series   with coefficients in $A$
 as an object  $A\{\ul{T}\}$ of $\cP\cB\cA_k$, with gauge $\cG(A\{\ul{T}\})$ and $R\{\ul{T}\} \in \cG(A\{\ul{T}\})$ an open subring of $A\{\ul{T}\}$. 
 \end{defn}
 \begin{lemma}\label{pseudoaffinoid1}  Let $A$ be an object of $\cP\cB\cA_k$. Then 
 $$A\{\ul{T}\} = A \wt^c_k k\{\ul{T}\}\;.$$
If $K$ is a non-archimedean field, $k = K^\circ$, and $A$ is a commutative $K$-Banach algebra identified with an object of $\cP\cB\cA_k$, $A\{\ul{T}\}$ is a commutative $K$-Banach algebra and 
$$
A\{\ul{T}\} = A \wt_{K,\pi} K\{T\} = A \wt_{K,\iota} K\{T\} 
$$
where $- \wt_{K,\pi} -$ (resp. $- \wt_{K,\iota} -$) is the (separated complete) projective (resp. injective) tensor product of \cite{schneider}. If the topology of $A$ (resp. $K\{\ul{T}\}$) 
is induced by a semivaluation $w$ (resp. by the Gauss semivaluation) the topology of $A\{\ul{T}\}$ is induced by the product semivaluation \eqref{prdoval} which coincides with the Gauss valuation of $A\{\ul{T}\}$ of Remark~\ref{gauss}.
 \end{lemma}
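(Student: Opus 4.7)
The plan is to identify both sides via an explicit gauge-level computation. Observe first that $k\{\ul{T}\}$ is itself an object of $\cP\cB\cA_k$ with the one-element gauge $\{k\{\ul{T}\}\}$: it is a $k$-subring of itself, and for each $I \in \cP(k)$ the quotient $k\{\ul{T}\}/\overline{I\,k\{\ul{T}\}} = (k/I)[\ul{T}]$ is a free (hence flat) $k/I$-module, realizing $k\{\ul{T}\} = \varprojlim_I (k/I)[\ul{T}]$ in its canonical topology. By Proposition~\ref{tenspsudcan} (which tacitly uses condition \textbf{OPW} on $k$, which holds whenever $k = K^\circ$ since $K^\circ$ satisfies \textbf{OP}), the product $A \wt^c_k k\{\ul{T}\}$ is pseudobanach with gauge $\{P \wt^c_k k\{\ul{T}\}\}_{P \in \cG(A)}$, and by part~1 of Lemma~\ref{basechange11} each such element equals $P \wt^u_k k\{\ul{T}\}$.

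The heart of the argument is the identification
\begin{equation*}
P \wt^u_k k\{\ul{T}\} \;=\; \varprojlim_{I \in \cP(k)} (P/\overline{IP}) \otimes_k (k/I)[\ul{T}] \;=\; \varprojlim_{I} (P/\overline{IP})[\ul{T}] \;=\; P\{\ul{T}\},
\end{equation*}
where the first equality applies the formula of Lemma~\ref{tenslemma} for $\wt^u_k$ with the cofinal choice $I = J$ in both variables, the second uses that the $k$-action on $(k/I)[\ul{T}]$ factors through $k/I$, and the third unfolds a compatible system of polynomials into a formal power series $\sum_{\ul{u}} a_{\ul{u}} \ul{T}^{\ul{u}}$ with $a_{\ul{u}} \in P = \varprojlim_I P/\overline{IP}$ satisfying the restrictedness condition ``for every $I \in \cP(k)$, almost all $a_{\ul{u}}$ lie in $\overline{IP}$''. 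Passing to the filtered colimit over the gauge then yields $A \wt^c_k k\{\ul{T}\} = \varinjlim_{P \in \cG(A)} P\{\ul{T}\} = A\{\ul{T}\}$ in $\cP\cB\cA_k$, matching the gauge of Definition~\ref{pseudoaffinoid}; the ring structures agree automatically since multiplication on both sides is convolution of coefficients.

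Parts~2 and~3 are then corollaries. The Banach-algebra identification transfers part~1 across the equivalence $\cP\cB_{K^\circ} \simeq \cB an_K$ of Corollary~\ref{banach2}, under which $\wt^c_k$ corresponds to both $\wt_{K,\pi}$ and $\wt_{K,\iota}$; here $K\{\ul{T}\}$ is the pseudobanach object $k\{\ul{T}\}$ viewed as a topological $K$-vector space via Remark~\ref{TVS}, so the identity becomes $A\{\ul{T}\} = A \wt_{K,\pi} K\{\ul{T}\} = A \wt_{K,\iota} K\{\ul{T}\}$. For the semivaluation claim, a direct computation shows that on $A \otimes_k k[\ul{T}]$ the product semivaluation~\eqref{prdoval} sends $\sum_{\ul{u}} a_{\ul{u}} \otimes \ul{T}^{\ul{u}}$ to $\min_{\ul{u}} w(a_{\ul{u}})$: the canonical representation gives the lower bound, and the reverse inequality comes from the definition of the Gauss valuation on $k[\ul{T}]$ applied to any other representation $\sum_j a_j \otimes p_j$, bounding each $a_{\ul{u}}$ by a $k$-linear combination of the $a_j$ weighted by Gauss-bounded coefficients of the $p_j$. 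Passing to the completion identifies this with the Gauss valuation on $A\{\ul{T}\}$ of Remark~\ref{gauss}. The only substantive step throughout is the projective-limit identification $P \wt^u_k k\{\ul{T}\} = P\{\ul{T}\}$; the rest is formal.
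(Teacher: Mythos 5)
The paper states this lemma with no proof at all (it is followed immediately by Example~\ref{banach}), so there is no argument of the author's to compare against; judged on its own, your proof is correct and supplies exactly the verification the paper omits. Your decomposition is the natural one: realize $k\{\ul{T}\}$ as a pseudobanach $k$-ring with the one-element gauge, reduce $A\wt^c_k k\{\ul{T}\}$ to the gauge-level products $P\wt^u_k k\{\ul{T}\}$ via Proposition~\ref{tenspsudcan} and part $\mathit 1$ of Lemma~\ref{basechange11}, and identify $P\wt^u_k k\{\ul{T}\}=\limpro_I (P/\ol{IP})[\ul{T}]=P\{\ul{T}\}$ using Lemma~\ref{tenslemma}; the last step correctly recovers the restrictedness condition because $\{\ol{IP}\}_I$ is a fundamental system of neighbourhoods of $0$ in $A$ by Remark~\ref{commens}. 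Two remarks. First, you are right that the first identity needs \textbf{OPW}: without it the $\wt^c_k$-topology, whose basic neighbourhoods $\Im(\ol{IP}\otimes_k J\{\ul{T}\})$ involve the products $IJ$, could be strictly finer than the gauge topology of $A\{\ul{T}\}$. The paper leaves this hypothesis implicit throughout Section~\ref{upcalgebras} (Remark~\ref{banringdef1} already relies on Proposition~\ref{tenspsudcan}), so your explicit flag, together with the observation that $K^\circ$ satisfies it, makes your proof more careful than the statement itself. Second, your semivaluation computation is the standard one (as in \cite[Lemma 17.2]{schneider}): for any representation $\sum_j b_j\otimes p_j$ one has $w(a_{\ul{u}})\geq \min_j\bigl(w(b_j)+ w_T(p_j)\bigr)$ where $w_T$ is the Gauss value, so the supremum in \eqref{prdoval} is attained at the canonical representation and equals $\min_{\ul{u}} w(a_{\ul{u}})$. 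I see no gaps.
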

 \begin{exa}\label{banach} We observe that Definition~\ref{pseudoaffinoid1} generalizes both the classical definition of ring of restricted power series with coefficients
in a linearly topologized ring of \cite[Chap. III, \S 4, n. 2, p. 252-259]{algebracomm} and the definition of an  $A$-affinoid algebra  $A\{T\}$ in the sense of \cite[2.1.7]{Berkovich}, when $A$ itself is a commutative $K$-Banach algebra. 
 \end{exa}
\begin{lemma} \label{produnif}  For $A$, $B$ in   $\cU\cP\cB\cA_k$,  $A \wt^c_k B$ is in $\cU\cP\cB\cA_k$.
\end{lemma}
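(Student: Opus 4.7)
The plan is to realize $A \wt^c_k B$ as a pseudobanach $k$-ring whose unit subring $(A \wt^c_k B)^\circ$ coincides with $R_0 := A^\circ \wt^u_k B^\circ$, which is visibly an open bounded $k$-subring coming from the gauge. First, since $A, B \in \cU\cP\cB\cA_k$, by Remark~\ref{banringdef0} one may choose gauges $\cG(A), \cG(B)$ with distinguished subrings $A^\circ$ and $B^\circ$; these are open and bounded in $A$ and $B$ respectively. Combining Remark~\ref{banringdef1} with Proposition~\ref{tenspsudcan}, $A \wt^c_k B$ becomes a pseudobanach $k$-ring with gauge $\{P \wt^u_k Q\}_{P \in \cG(A), Q \in \cG(B)}$, and $R_0$ belongs to this gauge as a $k$-subring (being the $\wt^u_k$-tensor of $k$-subrings). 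In particular $A \wt^c_k B \in \cP\cB\cA_k$.

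Next I verify that $R_0$ is open and bounded in $A \wt^c_k B$. Openness is built into its being a gauge element. For boundedness, the family $\{\overline{I R_0}\}_{I \in \cP(k)}$ is a fundamental system of open neighborhoods of $0$ in $A \wt^c_k B$ by Remark~\ref{commens}, and each $\overline{I R_0}$ is a closed ideal of the subring $R_0$ (as $R_0$ is closed in $A \wt^c_k B$, being an open sub-object). Therefore the subspace topology on $R_0$ is $R_0$-linear, so by the characterization of bounded open subrings proved in \S\ref{pmtype}, $R_0$ is bounded. As every element of a bounded subring is power-bounded, $R_0 \subseteq (A \wt^c_k B)^\circ$, which is thus an open subring of $A \wt^c_k B$.

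The crux is to show $(A \wt^c_k B)^\circ$ is bounded, equivalently (by the Corollary of \S\ref{pmtype}) that $A \wt^c_k B$ is of $\fpm$-type. My plan is to establish the sharper statement $(A \wt^c_k B)^\circ = R_0$. Given $t \in (A \wt^c_k B)^\circ$, the power-boundedness of $t$ supplies, for each open ideal $I \in \cP(k)$, an open neighborhood $V \subseteq R_0$ of $0$ with $V \cdot t^n \subseteq \overline{I R_0}$ for all $n \geq 0$. I intend to exploit the identification
$$R_0/\overline{I R_0} = A^\circ/\overline{I A^\circ} \otimes_{k/I} B^\circ/\overline{I B^\circ}$$
from Lemma~\ref{tenspsudcan1} together with the flatness of the factors over $k/I$ guaranteed by condition~$\mathit 2$ of Definition~\ref{pseudocan}, to propagate this algebraic trapping through the projective system defining $A \wt^c_k B$ and conclude that $t$ lies in $R_0$.

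The main obstacle is precisely this last inclusion $(A \wt^c_k B)^\circ \subseteq R_0$: it is the pseudobanach analogue of the classical fact that the projective tensor product of two power-multiplicative Banach seminorms is again power-multiplicative, and it is what makes the monoidal stability of ``uniform'' non-trivial. The delicate point is to translate the purely topological condition of power-boundedness of $t$ into an algebraic condition detectable on the flat quotients $R_0/\overline{I R_0}$; the tools for this translation are the flatness built into the gauge of $R_0$ and the compatibility of $\wt^u_k$ with these quotients. Once $(A \wt^c_k B)^\circ = R_0$ is established, the Corollary of \S\ref{pmtype} identifies $A \wt^c_k B$ as an object of $\cL\cR_k$ of $\fpm$-type, and combined with $A \wt^c_k B \in \cP\cB\cA_k$ this yields membership in $\cU\cP\cB\cA_k$.
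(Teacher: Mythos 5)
Your first two steps agree with the paper and are fine: by Remark~\ref{banringdef1} and Proposition~\ref{tenspsudcan}, $A \wt^c_k B$ lies in $\cP\cB\cA_k$ with gauge $\{P \wt^u_k Q\}$, the element $R_0 = A^\circ \wt^u_k B^\circ$ of that gauge is an open bounded $k$-subring, and since $R_0 \subset (A \wt^c_k B)^\circ$ the latter is an open subring. The problem is the step you yourself call the crux: you never prove that $(A \wt^c_k B)^\circ$ is bounded. You only announce a plan to establish the sharper identity $(A \wt^c_k B)^\circ = R_0$ by ``propagating the algebraic trapping through the projective system,'' and you explicitly leave the inclusion $(A \wt^c_k B)^\circ \subseteq R_0$ as an unresolved ``main obstacle.'' As written, the proof of the only non-formal assertion of the lemma is missing.

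Worse, the identity you are aiming at is false in general, so the plan cannot be completed as stated. Take $k = \Z_p = \Q_p^\circ$ and $A = B = L$ with $L = \Q_p(p^{1/2})$, viewed as objects of $\cU\cP\cB\cA_k$ with $A^\circ = B^\circ = L^\circ = \Z_p[p^{1/2}]$. Then $A \wt^c_k B = L \otimes_{\Q_p} L$ and $R_0 = L^\circ \otimes_{\Z_p} L^\circ$ is the $\Z_p$-span of $1\otimes 1$, $p^{1/2}\otimes 1$, $1 \otimes p^{1/2}$, $p^{1/2}\otimes p^{1/2}$. The element $t = p^{-1}\,(p^{1/2}\otimes p^{1/2})$ satisfies $t^2 = 1$, so $\{1,t,t^2,\dots\} = \{1,t\}$ is finite and hence bounded, i.e.\ $t \in (A \wt^c_k B)^\circ$; but $t \notin R_0$, since its coefficient $p^{-1}$ on the basis vector $p^{1/2}\otimes p^{1/2}$ is not in $\Z_p$. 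Thus $(A \wt^c_k B)^\circ \supsetneq A^\circ \wt^u_k B^\circ$ (here $(A\wt^c_kB)^\circ = L^\circ\times L^\circ$ under $L\otimes_{\Q_p}L\cong L\times L$, which is still bounded, so the lemma itself survives --- only your route to it breaks). What is actually needed, and what the paper argues, is the weaker \emph{uniform} statement that some single open ideal $I$ of $k$ satisfies $I\cdot (A \wt^c_k B)^\circ \subset R_0$; the paper obtains this by contradiction directly from the boundedness of $A^\circ$ and $B^\circ$, never claiming the equality $(A \wt^c_k B)^\circ = R_0$. To repair your write-up you must replace the claimed equality by such a boundedness argument.
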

\begin{proof} We already pointed out in Remark~\ref{banringdef1} that $A \wt^c_k B$ is in $\cP\cB\cA_k$ and then, by Remark~\ref{banringdef0}, 
$(A \wt^c_k B)^\circ$ is open. (On the other hand  the latter fact  also follows from $A^\circ \wt_k^u B^\circ \subset (A \wt^c_k B)^\circ$.)
Let $\cG(A)$ (resp. $\cG(B)$) be a gauge for $A$ (resp. $B$) containing an element $R$ (resp. $S$) which is a subring of $A$ (resp. $B$). 
 Assume that $ (A \wt^c_k B)^\circ$ is unbounded. Then, for any  open ideal $I$ of 
$k$, 
 there exists $x_I \otimes y_I\in (A \wt^c_k B)^\circ$
such that $I x_I \not\subset R$ or $I y_I \not\subset S$. But $x_I \in A^\circ$ and $y_I \in B^\circ$, so this would violate the boundedness of either $A^\circ$ or 
$B^\circ$, absurd. 
\end{proof}

\begin{defn} \label{classcomp} Let $K$ be a non-archimedean field and let $k = K^\circ$.   
A \emph{pseudobanach}   $K$-algebra is any object 
of $\cP\cB\cA_k$   which is a $K$-vector space.  
We denote by $\cB an \cA lg_K$ (resp. $\cU\cB an \cA lg_K$) the full subcategory of $\cL\cR_k$ whose objects are pseudobanach  $K$-algebras (resp. of $\fpm$-type). 
\end{defn}
\begin{exa} 
If $K$ is non-trivially valued, then the category $\cB an\cA lg_K$  is equivalent to the 
commonly used category  of $K$-Banach algebras, with the only \emph{caveat} that   morphisms   are simply continuous $K$-linear morphisms.  
The category $\cU \cB an \cA lg_K$ is equivalent to the category of $K$-Banach algebras of $\rm pm$-type and continuous $K$-algebra morphisms, considered by Fontaine \cite{fontaineA}.
\par 
\end{exa}

 It follows from Proposition~\ref{basechange1} that
\begin{cor} \label{basechunif}   Let $\kappa$   
be an object of $\cR\cR_k$.  
The base-change functor $A \mapsto (A)^c_{/\kappa}$,  induces   additive functors  of monoidal categories 
$$(\cP\cB\cA_k, \wt^c_k) \to (\cP\cB\cA_{\kappa} , \wt^c_{\kappa} ) 
\;\;\mbox{and}\;\;\;(\cU\cP\cB\cA_k, \wt^c_k) \to (\cU\cP\cB\cA_{\kappa} ,  \wt^c_{\kappa} ) 
\;.$$
\par
Assume in particular $L/K$ is an extension of non-archimedean valued fields. Then, for $k = K^\circ$ and $\kappa = L^\circ$, the base-change functor $A \mapsto A_{/\kappa}$ induces $K$-linear functors 
of monoidal categories
 $$(\cB an \cA lg_K, \wt^c_{K^\circ}) \to (\cB an \cA lg_L , \wt^c_{L^\circ}) \;\; \mbox{and}\;\; (\cU \cB an\cA lg_K, \wt^c_{K^\circ}) \to (\cU \cB an\cA lg_L  , \wt^c_{L^\circ})
 \;.
 $$
\end{cor}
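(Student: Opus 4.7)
The plan is to derive both assertions directly from Proposition~\ref{basechange1}, only adding the verifications that the ring-theoretic extra structure (existence of a subring in the gauge; boundedness of $(-)^\circ$; compatibility with $\wt^c$) is preserved under base change. I will split the argument into three independent checks.

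First, for the assertion on $\cP\cB\cA_k \to \cP\cB\cA_\kappa$, I would start from $A \in \cP\cB\cA_k$ with a gauge $\cG(A)$ containing a $k$-subring $R$. By Proposition~\ref{basechange1}\,(1), the family $(\cG(A))^c_\kappa = \{(P)^c_\kappa\}_{P\in\cG(A)}$ is a $\kappa$-gauge for $(A)^c_\kappa$; and by Remark~\ref{banringdef1} (or directly by the product map of Lemma~\ref{tensrep}\,(3)), $(A)^c_\kappa$ is a $\kappa$-ring, inside which $(R)^c_\kappa = R \wt^c_k \kappa$ is a $\kappa$-subring. So $(A)^c_\kappa$ lies in $\cP\cB\cA_\kappa$.

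For the restriction to $\cU\cP\cB\cA_k \to \cU\cP\cB\cA_\kappa$, I would exploit Remark~\ref{banringdef0}: if $A$ is of $\fpm$-type, one may assume $A^\circ \in \cG(A)$, and it suffices to check that $((A)^c_\kappa)^\circ$ is bounded. Since $\kappa$ is linearly topologized (every element of $\kappa$ is automatically power-bounded, $\kappa^\circ = \kappa$), the subring $(A^\circ)^c_\kappa = A^\circ \wt^c_k \kappa$ is open in $(A)^c_\kappa$ and is a bounded $\kappa$-submodule. The boundedness of $((A)^c_\kappa)^\circ$ then follows by the same elementary tensor argument used in Lemma~\ref{produnif}, with the roles of $B$ and $B^\circ$ played by $\kappa$ itself. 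I expect this is the only step where the translation from Proposition~\ref{basechange1} to the ring setting requires real work; the rest is bookkeeping.

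For the monoidal compatibility, I would argue that both $(A \wt^c_k B)^c_\kappa$ and $(A)^c_\kappa \wt^c_\kappa (B)^c_\kappa$ represent the same functor on $\cL\cR_\kappa$. Indeed, for any $X \in \cL\cR_\kappa$, combining the adjunction of Lemma~\ref{basechange}\,(2) with the universal properties of Lemma~\ref{tensrep} gives
\[
\hom_{\cL\cR_\kappa}\bigl((A\wt^c_k B)^c_\kappa, X\bigr) = \hom_{\cL\cR_k}(A\wt^c_k B, X) = \Ril^c_k(A\times B, X),
\]
while the same adjunction, applied factor by factor, identifies $\hom_{\cL\cR_\kappa}\bigl((A)^c_\kappa \wt^c_\kappa (B)^c_\kappa, X\bigr) = \Ril^c_\kappa((A)^c_\kappa \times (B)^c_\kappa, X)$ with $\Ril^c_k(A\times B, X)$. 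Yoneda then yields the natural isomorphism, and functoriality plus additivity of $(-)^c_\kappa$ are immediate from Lemma~\ref{basechange}.

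Finally, for the field-extension specialization, I would note that $L^\circ$ is torsion-free over the valuation ring $K^\circ$, hence flat (and in particular pro-flat in the sense of Definition~\ref{tffdef}), and both $K^\circ,L^\circ$ satisfy \textbf{OPW}, so the hypotheses of Proposition~\ref{basechange1}\,(3) are met, and the preceding steps apply verbatim to give the $K$-linear monoidal functors $\cB an\cA lg_K \to \cB an\cA lg_L$ and $\cU\cB an\cA lg_K \to \cU\cB an\cA lg_L$.
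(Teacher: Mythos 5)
The paper's own ``proof'' is the single sentence ``It follows from Proposition~\ref{basechange1}'', so your proposal takes exactly the same route and supplies the verifications the paper leaves implicit. Your three checks are the right ones and two of them are sound: the gauge $\{(P)^c_\kappa\}$ together with the open subring $(R)^c_\kappa$ gives $(A)^c_\kappa\in\cP\cB\cA_\kappa$; and the identification $(A\wt^c_k B)^c_\kappa\cong (A)^c_\kappa\wt^c_\kappa(B)^c_\kappa$ via ``both represent the same functor'' (equivalently: $\wt^c$ is the coproduct in $\cL\cR$, and the left adjoint $(-)^c_\kappa$ preserves coproducts) is a genuinely useful addition, since the paper never records this isomorphism. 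The boundedness of $((A)^c_\kappa)^\circ$ by the Lemma~\ref{produnif} argument with $B=\kappa$, $\kappa^\circ=\kappa$ is acceptable at the paper's own level of rigour. You are also right that the corollary silently inherits the hypotheses of Proposition~\ref{basechange1} ({\bf OPW} and pro-flatness of $\kappa$ over $k$), which the statement omits.

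The one genuine flaw is the final claim that $L^\circ$ flat over $K^\circ$ implies $L^\circ$ pro-flat over $K^\circ$ in the sense of Definition~\ref{tffdef}. That definition demands that $L^\circ/J$ be flat over $K^\circ/(J\cap K^\circ)$ for \emph{every} open ideal $J$ of $L^\circ$, and this fails as soon as the value group grows: take $K=\Q_p$, $L=\Q_p(\sqrt p)$ and $J=p^{3/2}L^\circ$, so that $J\cap K^\circ=p^2\Z_p$ and $L^\circ/J\cong \Z/p^2\oplus\Z/p$, which is not flat over $\Z/p^2$. So flatness does not yield pro-flatness as defined, and the hypothesis of Proposition~\ref{basechange1} is not literally verified for a general extension $L/K$. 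What \emph{is} true, and what the proofs of Lemma~\ref{basechange11} actually use, is that $L^\circ/\ol{IL^\circ}\cong L^\circ\otimes_{K^\circ}K^\circ/I$ is flat over $K^\circ/I$ for every open ideal $I$ of $K^\circ$ --- this does follow from flatness of $L^\circ$ over $K^\circ$. You should either restrict to this weaker form of pro-flatness and check it suffices for the gauge argument, or flag that Definition~\ref{tffdef} needs to be read in that weaker sense; as written, your last paragraph asserts something false. (The paper itself is silent on this point, so the gap is inherited rather than introduced, but your proof makes the incorrect claim explicit.)
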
 
\end{section}

\end{document}